\documentclass[10pt,twoside]{amsart}

\usepackage{amssymb}
\usepackage[english]{babel}
\usepackage{graphicx,epstopdf,epsfig}
\usepackage{algorithm}
\usepackage{algpseudocode}
\usepackage{amsfonts,epsfig,fancyhdr,graphics, hyperref,amsmath,amssymb}

\newcommand{\Names}{Assaf Goldberger, Radel Ben-Av, Giora Dula, and Yoseph Strassler}
\newcommand{\Title}{Constructing, Classifying and Studying the Space of Small Integer Weighing Matrices}

\DeclareMathOperator{\Aut}{\mathrm{Aut}}
\DeclareMathOperator*{\PermAut}{\mathrm{PermAut}}
\DeclareMathOperator{\Mon}{\mathrm{Mon}}
\DeclareMathOperator*{\Isom}{\mathrm{Isom}}
\DeclareMathOperator*{\PermIsom}{\mathrm{PermIsom}}
\DeclareMathOperator*{\Perm}{\mathrm{Perm}}
\DeclareMathOperator{\SAut}{\mathrm{SAut}}
\DeclareMathOperator{\TAut}{\mathrm{TAut}}
\DeclareMathOperator{\diag}{\mathrm{diag}}

\newtheorem{theorem}{Theorem}[section]
\newtheorem{remark}[theorem]{Remark}
\newtheorem{example}[theorem]{Example}
\newtheorem{definition}[theorem]{Definition}
\newtheorem{lemma}[theorem]{Lemma}
\newtheorem{corollary}[theorem]{Corollary}
\newtheorem{proposition}[theorem]{Proposition}
\newcommand{\ZZ}{\mathbb Z}

\newcommand{\be}{\begin{equation}}
\newcommand{\ee}{\end{equation}}
\newcommand{\nn}{\langle n\rangle}
\newcommand{\FF}{\mathbb{F}}
 
\begin{document}

\bibliographystyle{plain}

\setcounter{page}{1}

\thispagestyle{empty}

 \title{\Title}

\author{
Assaf Goldberger
\and
Radel Ben-Av
\and 
Giora Dula
\and
{Yoseph Strassler}
}


\markboth{\Names}{\Title}

\begin{abstract}
Integer weighing matrices (IW-matrices for short) are integer valued orthogonal square matrices. One usecase of these is to create classical weighing matrices with various block structures. In this paper we study and classify the space $IW(n,k)$ of the integer weighing matrices of small size $n\times n$ and weight $k$. Our classification includes a full list of all inequivalent matrices up to Hadamard equivalence and automorphism groups \cite{repo_sym2025}. We then continue to a secondary classification of the symmetric and antisymmetric IW up to symmetric Hadamard equivalence. We apply this to the case of projective space weighing matrices. Next we use the classification to count the cardinality of the spaces of all $IW(n,k)$ as well as the symmetric and anti-symmetric subspace. We supply practical algorithms and implement them in \texttt{Sagemath} \cite{sagemath}. Finding an (anti-)symmetric IW matrix in a given Hadamard class can be done for significantly higher orders. In particular we solve some open cases: Symmetric $W(23,16)$, $W(28,25)$ and $W(30,17)$, and an anti-symmetric $W(28,25)$. We conclude by showing a detailed classification of $IW(7,25)$. We have also improved the \texttt{NSOKS} \cite{riel2006nsoks} algorithm to find all possible representations of an integer $k$ as a sum of $n$ integer squares.   
\end{abstract}

\maketitle




\section{Introduction} \label{intro-sec}

Orthogonal matrices play a pivotal role in various fields of mathematics and engineering, particularly in linear algebra,
numerical analysis, and quantum mechanics. These matrices play an important role also in coding theory, cryptography, and combinatorial design.
In this paper, we delve into the problem of characterizing
orthogonal matrices with integer entries, where all vectors are of given uniform magnitude.

The challenge lies in ensuring that the orthogonality condition 
is met while restricting the entries to integers, a constraint that makes the solution space discrete and finite. However, it is still not easy to 
search due to it's high cardinality.

Let us define $PIW(m,n,k) = \{ \ P\ |\ P\in  \ZZ^{m\times n}\ ,\ PP^\top =kI\}$,   $\ZZ$  is the ring of integers, and let $IW(n,k)=PIW(n,n,k)$. Classical weighing matrices $W(n,k)$ are the subset of $IW(n,k)$ of matrices over $\{-1,0,1\}$. Weighing matrices have been extensively investigated over the past few decades \cite{chan1986inequivalent,Seberry2017OrthogonalDesigns,craigen1992constructions,SeberryYamada2020,Hadi:Sho:BTR:JACO:2022,Hadi:Thomas:Sho:JCTA:2022,M_M_Tan:DCC:2018}. 
A particular interesting subcase are Hadamard matrices $H(n)=W(n,n)$. Hadamard proved that $H(n)$ is nonempty for $n>2$ only if $n$ is divisible by 4. The Hadamard conjecture is that $H(n)$ is nonempty for all $n$ divisible by $4$. This is extended to a conjecture on weighing matrices, postulating that $W(4l,k)$ is nonempty for every $l$ and every $k\le 4l$.
For a (somewhat outdated) summary of methods and existence tables, see \cite[Chap. V]{colbourn2006handbook}.\\

A circulant matrix is square $n\times n$ matrix $C$ such that $C_{i,j}=f(i-j)$ where $f:\ZZ/n \to \mathbb C$ and the difference is taken modulo $n$. Circulant weighing matrices, denoted by $CW(n,k)$ exist only if $k$ is a perfect square, and under this assumption, there is a variety of solutions. For example, the weight $k=9$ has been fully classified, see \cite{ANG20082802}. Integral circulant weighing matrices, denoted by $ICW(n,k)$, are a stepping stone for the construction of circulant or general classical weighing matrices, see e.g. \cite{gutman2009circulant}. In a similar fashion, we may try to construct a weighing matrix $\displaystyle M=( C_{ij}) \in W(\displaystyle n_1n_2,k)$, with circulant blocks $C_{i,j}$ of size $n_2\times n_2$. If we replace each circulant block $C_{i,j}$ with its its first row sum $c_{i,j}$, the resulting matrix $S=(c_{i,j})$ is in $IW(n_1,k)$. The authors \cite{BENAV2024113908}, have used this method to construct $W(25,16)$ from a certain $IW(5,16)$. (see also \cite{munemasa2017weighing} for the matrix).
A classification of the space of $IW(7,25)$ is thus an appealing direction towards a construction of the unknown $W(35,25)$. Other special instances of this method are the doubling \cite{Seberry2005} and Wiliamson \cite{Williamson1944} constructions. It is also worthwhile to mention the concept of ``multilevel Hadamard matrices'' \cite{parker2011multilevel}, which has its separate motivation, and is a special case of $IW(n,k)$. Integral weighing matrices have also connections to number theory and lattice theory. IW-matrices are a special case of the problem of classifying integral quadratic forms, involving some deep topics such as local-to-global principles, genera and class groups, see e.g. \cite{Conway1999}.\\

Partial Hadamard and weighing matrices appear in the literature in many places.
In \cite{Goldberger2022} a practical method is given for completion of a $PIW(n/2,n,n)$ to a full Hadamard matrix $W(n,n)$. The paper \cite{Craigen2013CirculantPH} studies circulant partial Hadamard matrices in effort to shed more light on the \emph{circulant Hadamard conjecture}. Partial Hadamard matrices are also studied in relation to quantum groups \cite{BANICA2015364}, and fMRI technology \cite{cheng2015optimal}.
\begin{definition}
Two $PIW(m,n,k)$ matrices are \emph{Hadamard Equivalent} (or H-equivalent) if one can be obtained from the other using a sequence of the following operations:

\begin{enumerate}
    \item Multiplying a row or a column by -1, and
    \item Swapping two rows or columns.
\end{enumerate}
Note that $PIW(m,n,k)$ is closed under H-equivalence. In the case of $IW(n,k)$ we can add another operation:

\begin{enumerate}
    \setcounter{enumi}{2}
    \item Transpose the matrix.
\end{enumerate}
\end{definition}
Adding this operation we obtain an extension of the previous equivalence relation, which we call \emph{transpose-Hadamard} equivalence relation (or TH-equivalence). We denote H and TH equivalences by the relation $A\sim_HB$ and $A\sim_{TH}B$ respectively.\\ 

The classification of classical weighing matrices up to H-equivalence is a an important problem in the areas of combinatorial design theory and coding theory. 
The relation to coding theory comes via the classification of self-dual codes. 
The early work of \cite{MallowsPlessSloane1976} classifies self dual ternary codes of length up to $12$. A classification of all $W(n,k)$ for $n\le 11$ and $k\le 5$ appears in \cite{ChanRodgerSeberry1986}. Ohmori \cite{ohmori1988classifications,ohmori1993classification} classifies all weighing matrices of size $12$ and all $W(13,9)$. Harada and Munemasa \cite{Harada2011} have classified all $W(17,9)$ and found $2360$ classes. A recent unpublished work of Lampio and Ganzhinov \cite{https://us.ticmeet.com/assets/archivos/d6f1d9b8-d39f-4888-925a-7eb81c4905cc/Lampio.pdf} classifies completely weighing matrices of orders $16,18,19,20,21$ and partially $22,24,28$.
\\

In this paper we extend the classification problem to IW-matrices. We treat both H and TH-equivalences. We also compute the automorphism group of each equivalence class. In addition, we establish algorithms for finding and classifying all symmetric and anti-symmetric $IW(n,k)$ up to symmetric Hadamard equivalence, which is a finer equivalence relation. We utilize the automorphism groups compute the total number of all $IW(n,k)$ for small $n,k$ as well as the cardinality of symmetric and anti-symmetric IW-matrices. Our methods are well suited also for the case of classical weighing matrices. The algorithms are implemented in \texttt{SageMath} and can be found in \cite{repo_sym2025}. In this place the reader can also find computed classification data for small IW and weighing matrices.\\

The paper is organized as follows. In section \ref{sec:NSOKS} we outline our version of the \texttt{NSOKS} algorithm for finding all ways to write an integer as a sum of $n$ squares. Our algorithm improves over an existing application \ref{alg:nsoks}. In section \ref{sec:3} we derive our algorithm for classification of the $IW$. Our approach constructs each representative matrix row by row, making sure that we do not hit twice the same class. To this end we define a complete ordering on rectangular integer matrices, called the \emph{row-lex} ordering, aiming at producing a minimum element in each class. This ordering has the property that every prefix of a minimum matrix in a class is itself minimum. For efficiency reasons, we relax the minimality condition for prefixes at some point, at the cost of producing multiple representatives. This problem is settled later by pruning the isomorphic matrices using the techniques of the next section.\\

In section \ref{sec:aut} we study the isomorphism problem and compute automorphism groups of IW-matrices. The computations are based on the graph isomorphism problem. Our implementation uses the graph theory library in \texttt{Sagemath}. For the automorphism groups we compute a set of generators and an identifier according to the taxonomy given in the abstract finite group database in LMFDB \cite{lmfdb}. As a supplement, we supply and implement an algorithm for verifying that we have actually computed the full automorphism group, and also proves (non-)isomorphism.\\

To avoid huge automorphism groups, we start by restricting the discussion to \emph{primitive} IW-matrices, which are those that are not diagonal block sums of smaller IW matrices. We then reconstruct the non-primitive case with the aid of Theorems \ref{thm:prim} and \ref{thm:wreath}.

Section \ref{sec:5} is devoted to the study and classification of symmetric and anti-symmetric IW matrices. We develop a method of finding all symmetric and anti-symmetric members in a class. We use this method to solve some open cases of symmetric and anti-symmetric $W(n,k)$. A necessary condition for the existence of such objects in a given class is that the matrix is H-equivalent to its transpose. Under such circumstances, we prove a group theoretic criterion (Proposition \ref{prop:symm}) for the existence of a symmetric object. Furthermore, the classification of the symmetric objects in a class is in bijection with a cohomology set (Proposition \ref{prop:hom}). As a consequence we prove that if the automorphism group is abelian, the number of symmetric subclasses in the class is a power of $2$ (Corollary \ref{cor:hom}). Our symmetric classification algorithm performs best in the primitive case. To this end we develop a theory of (anti-)symmetric classification from primitive symmetric and ordinary classification, as given in Theorems \ref{thm:symm} and \ref{thm:a-symm}.
As a case study, in \S\ref{sec:proj} we compute explicitly the symmetric classification for the projective space incidence and weighing matrices.\\

Section \ref{sec:6} yields the tools to compute the cardinality of $IW(n,k)$ as well as the cardinality of the (anti-)symmetric space, based on the classification of the primitive cases. The counting formulas are given in terms of the automorphism groups and generating functions. Propositions \ref{prop:countIW} and \ref{prop:count_sym_IW} give us effective tools to count the size of these spaces. Finally, in section \ref{sec:7} we apply all of the above practices to $IW(7,25)$ and tabulate detailed information on the primitive and full classification, automorphism groups, symmetric classification and cardinalities.\\

As a final note, we cannot escape the impression that symmetric, and to a lesser degree anti-symmetric IW and W matrices are abundant. Not only for the small sizes we are able to classify, but for the moderate sizes as well, e.g. sizes $23$ and $28$ which we have looked at. It was not too difficult to find the symmetric objects using our methods. We therefore propose that efforts to solve open cases of weighing matrices should begin by restricting to symmetric instances.

\section{The \texttt{NSOKS} algorithm}\label{sec:NSOKS}

The problem of representing an integer as a sum of squares goes back to Fermat, Lagrange and Gauss. Mathematicians like Hilbert,Landau,Hardy, Littelwood, Ramanujan, Davenport and more worked on this problem and the more general Waring problem. Jacobi theta functions and modular forms shed light on the problem of counting the number of representations of an integer as the sum of $r$ squares. Our modest contribution to this problem is the \texttt{NSOKS} algorithm. This algorithm computes the collection of all representations of a positive integer $n$ as a sum of $r$ nonnegative squares. The input is the number $n$, an integer $r$ for the number of required squares and an optional argument $maxsq$ which is an upper bound for the integers $s$ in the representation. The output is the full list $[S_1,\ldots,S_t]$, where each $S_i$ is a list $[(s_1,m_1),\ldots, (s_l,m_l)]$ with $0\le s_1<s_2<\ldots s_l\le maxsq$ such that $\sum m_i=r$ and $\sum m_is_i^2=n$. Each $S_i$ is a representation of $n$ as a sum of $r$ nonnegative squares. We may view the output as the list of all $PIW(r,n,1)$ up to H-equivalence.\\

A \texttt{Maple}\textregistered{} \ implementation already exists \cite{riel2006nsoks}. Nevertheless, our \texttt{Sagemath} implementation runs faster. For example, our \texttt{NSOKS}$(200,200)$ outputs $27482$ representations and runs on our machine in 0.3s. In comparison, the Maple code on the same machine runs in 13s. Both codes have been checked to give the same answers. Our algorithm advances by recursion, from the largest square down to zero. The algorithm loops on the largest square $s^2$ and its multiplicity $m_s$. Then a recursive call to \texttt{NSOKS} is taken, relative to the parameter change $n\to n-m_ss^2$, while setting $maxsq=s-1$. The main improvement over the maple code is the work with multiplicities, which saves a lot of time, and reduces the recursion depth. Also, once we get down to $maxsq=1$, we finish with no need for further recursion. See Algorithm \ref{alg:nsoks} for details.   

\begin{algorithm}
\caption{Find all representations of $n$ as a sum of $r$ nonnegative squares }\label{alg:nsoks}
\begin{algorithmic}[1]
    \Procedure{\texttt{NSOKS}}{$n,r,maxsq=False$}
    \If{$maxsq=1$} \textbf{return} $[[(1,n),(0,r-n)]]$ \Comment{{\scriptsize No need for recursion\label{step:1}.}}
    \EndIf
    \State $M\gets \lfloor \sqrt{n} \rfloor$
    \If{$maxsq$}
        \State $M\gets \min(maxsq,M)$
    \EndIf
    \State $L\gets \lceil \sqrt{n/r}\rceil$
    \State $SquaresList\gets []$
    \For{$s\in [L,M]$} \Comment{{\scriptsize Loop on the square.}}
        \For{$i\in [1,\lfloor n/s^2\rfloor]$} \Comment{{\scriptsize Loop on the multiplicity.}}
            \State $n'\gets n-i\cdot s^2$
            \If{$i=r$}
                \State Append $[(s,r),]$ to $SquaresList$.
            \Else 
                \State $rem\gets$ \texttt{NSOKS}$(n',r-i,maxsq=s-1)$ \Comment{{\scriptsize The recursion step.}}
                \For{$SubSquaresList \in rem$}
                    \State Append $[(s,i),*SubSquaresList]$ to $SquaresList$
                \EndFor
            \EndIf

        \EndFor
    \EndFor
    \State \textbf{return} SquaresList
    \EndProcedure
 \end{algorithmic}
\end{algorithm}
 \section{The row-lex ordering and the search algorithm}\label{sec:3}
 In this section we define the row-lex ordering on the set of integer matrices of a given size $m \times n$, and prove some basic properties. Using these properties, we design a search algorithm to find an exhaustive list of all $PIW(m,n,k)$ up to Hadamard equivalence. For efficiency reasons the output of the algorithm may still contain multiple candidates in a single Hadamard class. Using isomorphisms in next section as a post processing will fix this issue.\\
 
 \subsection{The row-lex ordering} The discussion here is not limited to partial weighing matrices, and we consider all integer matrices of a given size $p\times n$. We denote this set by $\ZZ^{p\times n}$. The set $\ZZ$ of all integers carries its natural ordering $\le$. We first extend this ordering to the set $\ZZ^n$ of $n$-vectors by the lexicographic extension of $\le$, still denoted $\le$. This means that
 \begin{equation}
    (v_1,\ldots,v_n)<(w_1,\ldots,w_n), \ \text{iff }\exists j  \ (v_1,\ldots,v_{j-1})=(w_1, \ldots, w_{j-1}) \text{ and } v_j<w_j.
\end{equation}
Next we extend this ordering to $m\times n$-matrices by lexicographic extension of $\le$ on the rows (i.e. viewing the matrix as a vector of rows). We write this ordering as $M\le_R N$. This is called the \emph{row-lex ordering}. Similarly we can consider the \emph{column-lex ordering}, by extending $\le$ on columns, viewing the matrix as a vector of columns. We shall write $M\le_C N$ for this ordering. The two orderings are not equal. As our algorithm builds the matrices row by row, it turns out that the row-lex ordering is more useful.\\

Let us introduce some notation.
{\rm For any matrix $M$, let $M_{i,j}$ be the $(i,j)$-entry, let $M_i$ denote its $i$th row and let $M^j$ denote its $j$th column. Let $M_{i:k}$ denote the submatrix whose rows are $M_i,M_{i+1},\ldots,M_{k-1}$ given in this order. We denote $M^{j:l}$ analogously for columns. } Let $(-1)_iM$ denote the matrix $M$ with $M_i$ replaced by $-M_i$. More generally, given a set of indices $S$, let $(-1)_SM$ be the matrix $M$ with the rows $M_i$ negated for each $i\in S$. Analogously we denote $(-1)^jM$ and $(-1)^SM$ for columns. For each matrix $M$ let $[M]$ denote its Hadamard equivalence class.

\begin{definition}  
\begin{equation}
Min(M) = \min \{A\ | \ A\in [M]\},
\end{equation} 
{\rm the minimum is taken with respect to the row-lex ordering. }
\end{definition}
{\rm We say that $M$ is \emph{minimum} if $M=Min(M)$. In each Hadamard class there exists a unique minimum matrix. We now study some properties of minimum matrices. We say that a vector $v$ \emph{begins with a positive (resp. negative) entry} if for some $j$, $v_1=\cdots=v_{j-1}=0$ and $v_j>0$ (resp. $v_j<0$).}

\begin{lemma}\label{lem:minmat}
    In a minimum matrix each nonzero row and each nonzero column begins with a negative entry.
\end{lemma}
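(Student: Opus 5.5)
The plan is to argue by contradiction, using only the sign-change operations of Hadamard equivalence, which preserve the class $[M]$. Let $M$ be minimum and suppose some nonzero row or column begins with a positive entry. In each case I will exhibit a matrix $A\in[M]$ with $A<_R M$, contradicting $M=Min(M)$.

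\emph{Rows.} Suppose the nonzero row $M_i$ begins with a positive entry, at position $j$. Put $A=(-1)_iM$. Then $A$ agrees with $M$ in every row except the $i$th. Compare $A_i=-M_i$ with $M_i$: their first $j-1$ entries are all $0$, and $A_{i,j}=-M_{i,j}<0<M_{i,j}$. Hence $A_i<M_i$ lexicographically. Since rows $1,\ldots,i-1$ coincide, $A<_R M$ follows.

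\emph{Columns.} Suppose the nonzero column $M^j$ begins with a positive entry, at row $r$, so $M_{1,j}=\cdots=M_{r-1,j}=0$ and $M_{r,j}>0$. Put $A=(-1)^jM$.
\begin{itemize}
\item For $s<r$, negating column $j$ changes only entry $j$ of row $s$, which is $0$. So $A_s=M_s$.
\item Rows $M_r$ and $A_r$ differ only in coordinate $j$, where $A_{r,j}<M_{r,j}$. So $A_r<M_r$ lexicographically.
\end{itemize}
Hence $A<_R M$, again a contradiction.

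The key point, and the only place needing care, is the column case. Negating a column alters many rows, but the row-lex order is decided at the first differing row. That row is exactly the first row with a nonzero entry in that column, and there the change makes the row smaller. With that observation both cases are immediate. Note that the two cases are independent and each applies to the given minimum $M$ directly; there is no need to fix rows and columns simultaneously. Conceivably one might try to also use row swaps, but they are unnecessary here.
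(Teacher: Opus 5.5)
Your proof is correct and follows the same approach as the paper: negate the offending row or column, and observe that the row-lex comparison is decided at the first row with a nonzero entry in the relevant position, where the negated matrix is strictly smaller. Your column case spells out explicitly why rows $1,\ldots,r-1$ are unchanged and row $r$ decreases, which the paper states more tersely.
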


\begin{proof} Let $M$ be minimum.
    Suppose that a row $M_i$ begins with a positive entry. Then $-M_i<M_i$, and by definition $(-1)_i M<M$, in contradiction to minimality.\\
    Now suppose that $M^j$ begins with a positive entry, sitting at the position $(i,j)$. Then in $(-1)^jM$, the first $i-1$ rows remain unchanged, while $((-1)^jM)_i<M_i$, which in turn implies that $(-1)^jM<M$, again contradicting the minimality of $M$.
\end{proof}
A nice consequence of this proof, not being used in this paper, is the following statement.

\begin{theorem}
    Each matrix $M$ can be brought, using only row and column negations, to a form where each nonzero row and column begins with a negative entry. 
\end{theorem}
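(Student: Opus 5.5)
The plan is to mimic the proof of Lemma~\ref{lem:minmat}, with the full Hadamard class replaced by the smaller orbit of $M$ under row and column negations only. Let $G=\{\pm1\}^p\times\{\pm1\}^n$ act on $\ZZ^{p\times n}$ by $(S,T)\cdot M=(-1)_S(-1)^TM$. Row negations and column negations commute, and each is an involution, so this is a group action and the orbit $G\cdot M$ is finite, of size at most $2^{p+n}$. We take $A$ to be the minimum of $G\cdot M$ with respect to the row-lex ordering $\le_R$. Such a minimum exists because $\le_R$ is a total order and the orbit is finite.

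We then verify the conclusion for $A$ by contradiction, exactly as in Lemma~\ref{lem:minmat}.

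\textbf{Rows.} Suppose a nonzero row $A_i$ begins with a positive entry. Then $-A_i<A_i$ lexicographically. Since $(-1)_iA$ agrees with $A$ in rows $1,\ldots,i-1$, we get $(-1)_iA<_R A$. But $(-1)_iA$ lies in $G\cdot M$, contradicting the minimality of $A$.

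\textbf{Columns.} Suppose a nonzero column $A^j$ begins with a positive entry at position $(i,j)$, so $A_{i',j}=0$ for all $i'<i$ and $A_{i,j}>0$.
\begin{itemize}
\item Negating column $j$ leaves rows $1,\ldots,i-1$ unchanged, because their $j$-th entries are zero.
\item In row $i$, only the $j$-th entry changes, from $A_{i,j}>0$ to $-A_{i,j}<0$. Hence $((-1)^jA)_i<A_i$ lexicographically.
\end{itemize}
It follows that $(-1)^jA<_R A$, and again $(-1)^jA\in G\cdot M$, a contradiction.

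The only point needing care is that the argument of Lemma~\ref{lem:minmat} uses only row and column negations, never permutations or transposition. So minimality over the subset $G\cdot M\subseteq[M]$ is all that is required, and the proof transfers verbatim. Existence of the minimum is immediate from finiteness of the orbit, so I expect no genuine obstacle.
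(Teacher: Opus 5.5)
Your proof is correct and matches the paper's: take the row-lex minimum over the orbit of $M$ under row and column negations, then rerun the argument of Lemma~\ref{lem:minmat}, which uses only negations. You also spell out the existence of the minimum (finite orbit, total order) and why each negation strictly decreases the matrix, which the paper leaves implicit.
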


\begin{proof}
    We modify $M$ to be the minimal matrix among all possible row and column negations. The proof of the Lemma \ref{lem:minmat} shows that now every nonzero row and column begins with a negative entry. 
\end{proof}

\begin{lemma}
    In a minimum matrix $M\in \ZZ^{m\times n}$, the \underline{columns} are in increasing order: $M^1\le M^2\le \cdots \le M^n$.
\end{lemma}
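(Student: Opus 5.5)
We argue by contradiction, using a single column transposition, in the same spirit as the proof of Lemma \ref{lem:minmat}. Suppose $M$ is minimum and that $M^j > M^{j+1}$ for some $j$; note that $\le$ on columns is the lexicographic order on $\ZZ^m$, reading entries top to bottom. Let $M'$ be the matrix obtained from $M$ by swapping columns $j$ and $j+1$. Then $M'\in[M]$. We will show that $M'<_R M$, which contradicts minimality. Comparing adjacent pairs is enough, because if every adjacent pair satisfies $M^j\le M^{j+1}$, then transitivity gives the full chain $M^1\le M^2\le\cdots\le M^n$.

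To compare $M'$ with $M$ in the row-lex order, we use the first position where the two columns differ. Since $M^j>M^{j+1}$ lexicographically, there is an index $i$ such that $M_{r,j}=M_{r,j+1}$ for all $r<i$, and $M_{i,j}>M_{i,j+1}$. Now consider the rows.

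\begin{enumerate}
\item For $r<i$, the row $M'_r$ equals $M_r$ with two equal entries interchanged. Hence $M'_r=M_r$.
\item In row $i$, the entries $M'_{i,l}=M_{i,l}$ agree for all $l<j$, and $M'_{i,j}=M_{i,j+1}<M_{i,j}$. Hence $M'_i<M_i$ in the lexicographic order on $\ZZ^n$.
\end{enumerate}

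By the definition of $\le_R$, these two facts give $M'<_R M$. This contradicts $M=Min(M)$.

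The argument has no real obstacle. The one point that needs care is to check that the first differing row of $M'$ and $M$ is exactly row $i$. This holds because the swap changes only entries in columns $j$ and $j+1$, and those entries agree in all rows above $i$. Once that is confirmed, the comparison inside row $i$ is decided at position $j$, since all earlier positions are unchanged.
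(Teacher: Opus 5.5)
Your proposal is correct and follows essentially the same argument as the paper: an adjacent column inversion, swapped, leaves the rows above the first differing entry unchanged and strictly decreases that row, so $M'<_R M$. The only difference is cosmetic: the paper also picks the smallest such $j$, which is unnecessary, as your version shows.
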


\begin{proof}
    Suppose by contradiction that $M^{j-1}>M^j$ for some $j$. Let $j$ be the smallest index with this property. Then for some $i$, $M_{s,j-1}=M_{s,j}$ for all $s<i$ and $M_{i,j-1}>M_{i,j}$. By swapping columns $j,j-1$ we obtain a matrix $M'$ in which rows $1,2,\ldots,i-1$ did not change, while row $i$ has decreased. Thus $M'<_R M$, a contradiction.
\end{proof}

The following is a key property in our algorithm.

\begin{lemma} \label{part_min}
    For the row-lex ordering, a matrix $M$ is minimum, if and only if for every $i$ the prefix $M_{1:i}$ is minimum.
\end{lemma}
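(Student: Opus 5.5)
We prove the two directions separately. The reverse direction is immediate: taking $i=m$ gives $M_{1:m}=M$. Note that $M_{1:i}$ as defined consists of rows $1,\dots,i-1$; we read ``every $i$'' as ranging over all prefix lengths up to the full matrix, so that this includes $M$ itself.

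The main work is the forward direction: if $M$ is minimum, then every prefix $P=M_{1:i}$ (with $p$ rows, say) is minimum in its own Hadamard class. We argue by contradiction. Suppose $A\in[P]$ with $A<_R P$. Then $A$ is obtained from $P$ by row operations (signs and permutation of the $p$ rows) and column operations (signs and permutation of the $n$ columns). Concretely, $A=\sigma D_1 P D_2\tau$, with $\sigma$ a row permutation, $\tau$ a column permutation, and $D_1,D_2$ diagonal sign matrices. We apply exactly the same column operation $D_2\tau$ to all of $M$. On the rows, we apply $D_1$ and $\sigma$ to the first $p$ rows only, leaving rows $p+1,\dots,m$ fixed. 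Call the result $M'$. This is a legitimate Hadamard operation on $M$, since a row permutation of the first $p$ rows extended by the identity is a permutation of all rows. Hence $M'\in[M]$.

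By construction, the first $p$ rows of $M'$ are exactly $A$. The key observation is that the row-lex order compares matrices row by row from the top. Since $A<_R P$, there is a first row index $r\le p$ where $A_r\neq P_r$, and there $A_r<P_r$, with all earlier rows equal. The first $r$ rows of $M'$ and $M$ then agree with those of $A$ and $P$ respectively, so $M'<_R M$. This contradicts the minimality of $M$.

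The only delicate point, which I expect to be the main step to state carefully, is that column operations act on all rows simultaneously. The row-lex comparison, however, depends only on the top rows until the first difference, so whatever the column operation does to the lower rows is irrelevant. It is also important that row operations are applied only within the prefix. This is what makes the row-lex order, unlike the column-lex order, compatible with taking row prefixes.
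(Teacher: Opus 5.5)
Your proof is correct and follows the paper's argument. In both, the Hadamard operations that decrease the prefix are lifted to $M$ by acting on all columns but only the first $i$ rows, and the row-lex comparison is then decided at the first differing row, which lies inside the prefix. One small slip: under the paper's convention $M_{1:i}$ consists of rows $1,\dots,i-1$, so the full matrix is $M_{1:m+1}$, not $M_{1:m}$, as you partly acknowledged yourself.
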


\begin{proof}
    Clearly $M_{1:i}<_RM'_{1:i}$ implies $M<_RM'$. If $M_{1:i}$ is not minimal, then we can perform Hadamard operations on $M$ involving all columns and only the first $i$ rows, to decrease $M_{1:i}$. The resulting matrix $M'<_R M$, in contradiction to the minimality of $M$.
\end{proof}
We remark that in general the initial column submatrices $M^{1:j}$ of a minimum matrix $M$ need not be minimum in the column-lex ordering. This asymmetry is why we prefer to use the row-lex ordering when building the matrix row by row. 

\subsection{Minimizing a class}
In this short subsection we describe the sub-algorithm \texttt{MINCLASS} to find the minimum representative in a Hadamard class of a partial matrix.
Suppose that we are given a matrix $M\in \ZZ^{m\times n}$. Let $\Mon(m)$ denote the set of all monomial $m\times m$ matrices with values in $\{0,-1,1\}$. Let $Neg(M)$ denote the matrix obtained from $M$ by negating each column that begins with a positive entry. Let $Ord(M)$ be the matrix obtained from $M$ by permuting its columns to be written from left to right in increasing column order. Consider the following Algorithm \ref{alg:minclass}:

\begin{algorithm}
\caption{Minimizing a Hadamard class}\label{minclass}\label{alg:minclass}
\begin{algorithmic}[1]
    \Procedure{\texttt{MINCLASS}}{$M$}
    \State $m\gets $ height($M$), $n\gets$ width($M$)
    \State $Min\gets M$
        \For{$P\in \Mon(m)$}\Comment{go over all row negations and permutations}
        \State $N\gets PM$
        \State $N\gets Neg(N)$  
        \State $N\gets Ord(N)$ 
        \If{$N<_R Min$}
            \State $Min\gets N$
        \EndIf
        \EndFor
    \State \textbf{return} $Min$
    \EndProcedure
\end{algorithmic}
\end{algorithm}

\begin{lemma}
The procedure \texttt{\textsc{MINCLASS}}$(M)$ returns the minimal matrix in the class of $M$.  
\end{lemma}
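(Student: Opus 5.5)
The plan is to show two things. First, every matrix produced in the loop lies in $[M]$. Second, the true minimum $Min(M)$ is itself produced for some choice of $P\in\Mon(m)$. Since the procedure keeps the smallest matrix it encounters, these two facts together show that the output is $Min(M)$.

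The first fact is immediate. Left multiplication by a monomial matrix is a sequence of row swaps and row negations. $Neg$ negates columns and $Ord$ permutes columns. Hence $Ord(Neg(PM))\sim_H M$ for every $P$. The initial value $Min\gets M$ also lies in the class.

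For the second fact, write $A=Min(M)$. Any Hadamard operation sequence can be normalized to the form $A=PMQ$, where $P\in\Mon(m)$ and $Q\in\Mon(n)$. This holds because row operations commute with column operations, and monomial matrices form groups. Fix this $P$ and set $N=PM$. Then $A=NQ$, so $A$ is obtained from $N$ by column negations and a column permutation only. The key claim is the following: among all matrices $NQ'$ with $Q'\in\Mon(n)$, the unique $\le_R$-smallest one is $Ord(Neg(N))$.

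To prove the key claim, regard the columns as a multiset. Negating column $j$ replaces the vector $N^j$ by $-N^j$. A nonzero column $v$ satisfies $-v<v$ exactly when $v$ begins with a positive entry. So $Neg$ replaces each column by the smaller of $\{v,-v\}$ in the lex order on $\ZZ^m$. Next, I compare any $NQ'$ with $B=Ord(Neg(N))$. Choose the first row index $i$ where they differ, and within that row the first column $j$ where they differ. I argue as in the proofs of Lemma~\ref{lem:minmat} and the column-ordering lemma. Up to row $i-1$ the matrices agree. Consider the multiset of column-prefixes restricted to rows $1,\dots,i$. In $B$ each column is chosen sign-minimal and the columns are sorted, so $B$ realizes the lexicographically smallest arrangement. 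Therefore $B_i\le (NQ')_i$.

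A cleaner formulation of the same argument is this: $B$ is the matrix whose column sequence is the sorted list of the sign-minimized columns. Its columns $B^1\le\cdots\le B^n$ are each individually minimal, so $B\le_C NQ'$. I still need to convert this to row-lex order. The comparison is restricted to a fixed prefix of rows where the two matrices agree. On that prefix, sorting is consistent in both orders.

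The main obstacle is exactly this last comparison. Row-lex and column-lex orders differ, so I must check carefully that sign-minimizing each column and then sorting minimizes in the row-lex sense. The first row is not simply determined by the first entries, because a column with first entry $0$ has its sign decided by later entries. Once the key claim is established, the rest follows. For the $P$ above, $Ord(Neg(PM))\le_R A$. Minimality of $A$ then forces equality. Hence the loop encounters $A$, and the returned $Min$ equals $A$.
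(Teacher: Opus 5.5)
Your overall frame is right: every candidate $Ord(Neg(PM))$ lies in $[M]$, so it suffices that $Min(M)$ occurs among them. But the step you yourself call ``the main obstacle'' is never proved, and the whole argument rests on it.

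Your first argument for the key claim simply asserts that sign-minimizing and sorting the columns ``realizes the lexicographically smallest arrangement'' in row $i$, which is the statement to be proved. Your second argument correctly gives $B\le_C NQ'$, but the passage to $\le_R$ (``on that prefix, sorting is consistent in both orders'') is unjustified, since column-lex and row-lex minima of a set of matrices differ in general. A direct proof along your lines would have to do three things. First, use that agreement in rows $1,\dots,i-1$ forces the sign of every column with a nonzero $(i-1)$-prefix. Second, use it to group the columns into contiguous blocks of equal prefix. Third, compare row $i$ block by block. As written, $Ord(Neg(PM))=Min(M)$ is not established.

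The missing idea is that you do not need the key claim at all. You already have $A=Min(M)=NQ$, and two facts proved earlier apply to $A$ directly. By Lemma~\ref{lem:minmat}, every nonzero column of $A$ begins with a negative entry. By the column-ordering lemma, $A^1\le\cdots\le A^n$. The columns of $A$ are those of $N$ up to sign and order, and for a nonzero $v$ exactly one of $\pm v$ begins with a negative entry. Hence $A$ and $Neg(N)$ have the same multiset of columns, and sortedness then forces $A=Ord(Neg(N))$. This is the paper's proof.

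If you want your key claim anyway, the same reasoning yields it. Let $B_0$ be the $\le_R$-minimum of $\{NQ' \,:\, Q'\in\Mon(n)\}$. The column parts of the proofs of those two lemmas use only column operations, so $B_0$ has sign-normalized, sorted columns, hence $B_0=Ord(Neg(N))$.
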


\begin{proof}
    Let $M_0=PMQ^\top$ be the minimum matrix in the class $[M]$, $P,Q$ are monomial. The algorithm
 enumerates over $P\in \Mon(m)$ and for the correct $P$ we have $N:=PM=M_0Q$. It suffices to show that $M_0=Ord(Neg(N))$. The nonzero columns of both $Neg(N)$ and of $M_0$ all begin with a negative entry, so both matrices have the same multiset of columns, which means that $M_0=Neg(N)\Pi$ for a permutation matrix $\Pi$. Since the columns of $M_0$ are in increasing order, then necessarily $Ord(Neg(N))=M_0$.
\end{proof}

\begin{remark}
    Algorithm \ref{alg:minclass} performs exhaustive search on the row operations of $M$, and therefore its cost grows exponentially with the number of rows. 
\end{remark}

\subsection{The main search algorithm}
Now we turn to the main algorithm \texttt{RepPIW} (Algorithm \ref{alg:repiw}) which outputs a list of representatives of all Hadamard classes in $PIW(m,n,k)$. In its default implementation the program outputs exactly one matrix per class, the minimum one. However it contains an optional parameter, `mindepth', which greatly improves the running time, at the cost of getting multiple matrices per a single class. Before stating the algorithm we give a concise description.\\

The algorithm relies on Proposition \ref{part_min} that initial submatrices of a minimum matrix are minimum. 
The starting point is a list of all minimum integral vectors of weight $k$, which is in bijection with the output of \texttt{NSOKS}$(n,k)$. This gives the list for all minimum prefixes $PIW(1,n,k)$. At each stage the algorithm holds a list $MinPIW(p,n,k)$ of all minimum representatives of the $PIW(p,n,k)$. To each member $X\in MinPIW(p,n,k)$, 
we produce the list $LV(X)$ of all integral vectors of weight $k$ that are (i) bigger than the last row of $X$, and (ii) are orthogonal to all rows of $X$. Then for each $v\in LV(X)$ we obtain the matrix $X_{new}$ by stacking  $v$ below $X$. Using \texttt{MINCLASS}, we test if $X_{new}$ is minimum. We add it to the new list $MinPIW(p+1,n,k)$ iff it is minimum. Stopping at $p=m$, Proposition \ref{part_min} guarantees that we have correctly created a list of representatives for all Hadamard classes of $PIW(m,n,k)$.\\

One improvement that we add, which greatly affects the performance, is the parameter `mindepth' that tells the algorithm to stop using \texttt{MINCLASS} if $p>mindepth$. In this case we just stack any vector $v$ satisfying (i) and (ii), and add the new matrix to the list. The parameter `mindepth' is to be chosen so that there are not too many vectors left on the list, causing the output of the algorithm not to be too large. This saves a great deal of time on using \texttt{MINCLASS} which is time consuming. But this comes at the cost of obtaining multiple representatives in a class. Below in \S\ref{sec:aut} we will discuss the isomorphism problem, which will reduce the list to one representative per class. In the following algorithm we use the following notation: `SignedPerms$(v)$' returns the set of all permutations and element negations of a vector $v$. For a matrix $M$, recall that $M_i$ denotes its $i$th row. Let $M_-$ denote the matrix without its last row. Let $[M,v]$ denote the matrix $M$, augmented by the additional row $v$.\\

\begin{algorithm}
    \caption{Generating Hadamard representatives of $PIW(m,n,k)$}\label{RepPIW}\label{alg:repiw}
    \begin{algorithmic}[1]
    \Procedure{\texttt{RepPIW}}{$m,n,k,mindepth=m$}
    \State $SOKS\gets$ \texttt{NSOKS}$(n,k)$
    \State $MinPIW[1]\gets $ [\texttt{MINCLASS}$(v)$ for $v$ in $SOKS$] 
    \State $AllRows\gets$ SignedPerms$(SOKS)$ \Comment{{\scriptsize This is the full resrvoir of all possible rows.}} 
    \If{m=1}
        \State \textbf{return} $MinPIW[1]$
    \EndIf
    \For{$v \in MinPIW[1]$} \Comment{{\scriptsize Compute a list of all second rows.}}
            \State $R(v)\gets $[$w\in AllRows$ if $wv^\top=0$ \& $w>v$]
    \EndFor
    \For{$p=1$ to $m-1$}
        \For{$X\in MinPIW[p]$}
            \State $R(X)\gets $ [$w\in R(X_-)$ if $X_p w^\top=0$ \& $w>X_p$] \ \ \\ \Comment{Make a list of all $p$th rows. This tests conditions (i) and (ii)}
            \For{$w\in R(X)$}
                \State $X_{new}=[X,w]$
                \If{$p\le mindepth$}
                    \If{$X_{new}==$ \texttt{MINCLASS}$(X_{new})$}
                        \State Append $X_{new}$ to $MinPIW[p+1]$.
                    \EndIf
                \Else
                    \State Append $X_{new}$ to $MinPIW[p+1]$.
                \EndIf
            \EndFor
        \EndFor
    \EndFor
    \State \textbf{return} $MinPIW[m]$
    \EndProcedure
    \end{algorithmic}
\end{algorithm}

\begin{remark}
    Notice that in this algorithm we keep, together with the prefix $X$, also a list $R(X)$ of all vectors $v$ that are orthogonal to $X$ and greater than the bottom row of $X$. We filter $R(X)$ from $R(X_-)$ by checking that the vectors are orthogonal to the bottom row (line 13). Thus we only need to check orthogonality to a single vector, and just filter it from the smaller list $R(X_-)$.
\end{remark}

\begin{theorem}
    The function \textsc{\texttt{RepPIW}}$(m,n,k)$ outputs the list $MPIW(m,n,k)$ of all minimum Hadamard representatives of $PIW(m,n,k)$.\\ The function 
    \textsc{\texttt{RepPIW}}$(m,n,k,mindepth=d)$ outputs a larger list of $PIW(m,n,k)$ containing all minimum elements.
\end{theorem}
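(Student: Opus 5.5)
We argue by induction on $p$, the number of rows. The invariant to establish is: for every $p\le m$, the list $MinPIW[p]$ produced with the default $mindepth=m$ equals the set of minimum matrices in $PIW(p,n,k)$, each listed exactly once.

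For $p=1$ the claim is immediate. \texttt{NSOKS} lists every multiset of absolute values of a weight-$k$ vector of length $n$. Any vector is H-equivalent to exactly one such multiset, since H-operations on a single row are exactly sign changes and permutations of entries. So $[\texttt{MINCLASS}(v) : v\in SOKS]$ lists each class exactly once, by the lemma on \texttt{MINCLASS}.

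For the inductive step, let $M$ be a minimum matrix in $PIW(p+1,n,k)$. By Lemma \ref{part_min}, $X=M_{1:p+1}$ (the first $p$ rows) is minimum, so $X\in MinPIW[p]$ by induction. The last row $w=M_{p+1}$ has weight $k$, so it lies in $AllRows$, and it is orthogonal to all rows of $X$.

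The key point is that $w>X_p$. Swapping rows $p$ and $p+1$ is an H-operation. If $w<X_p$, the swap would give a row-lex smaller matrix, contradicting minimality. The case $w=X_p$ is impossible, because $w\cdot w^\top=k\neq0$ while $X_p w^\top=0$; we assume $k>0$.

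Next we check that the filtered lists $R(X)$ contain all such $w$. Unwinding the recursion, $R(X)$ is the set of $w\in AllRows$ with $X_iw^\top=0$ and $w>X_i$ for all $i\le p$. The conditions $w>X_i$ follow from $w>X_p$, because the rows of a minimum matrix are increasing by the same swap argument. Hence $w\in R(X)$, $X_{new}=M$ is generated, and the test $X_{new}=\texttt{MINCLASS}(X_{new})$ passes.

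Conversely, every appended $X_{new}$ has orthonormal-weight rows, so it lies in $PIW(p+1,n,k)$, and it passed the minimality test. No duplicates arise, since distinct pairs $(X,w)$ give distinct matrices. This completes the induction, and at $p=m$ we obtain $MPIW(m,n,k)$.

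For general $mindepth=d$, the \texttt{MINCLASS} test is simply omitted when $p>d$. The output is therefore a superset of what the test would keep. By the same argument as above, each minimum $M\in PIW(m,n,k)$ still has all its prefixes generated, since its candidate rows lie in the relevant $R(X)$ and no filtering removes them. Every output is still a genuine $PIW(m,n,k)$ because orthogonality and weight are always checked.

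The main obstacle is the bookkeeping around $R(X)$. The code filters only against the bottom row $X_p$ at each step, so we must verify that the recursively built $R(X)$ coincides with the intended set, orthogonal to all rows and greater than the last. This needs the observation that rows of minimum prefixes increase strictly. In the $mindepth$ case, where prefixes are no longer minimum, it needs the observation that $R$ is still built by the same filtering from $R(X_-)$, so no candidate row of a minimum matrix is lost.
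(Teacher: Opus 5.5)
Your proof is correct and follows the paper's argument: an induction on the number of rows in which Lemma~\ref{part_min} places the prefix $M_{1:p+1}$ in $MinPIW[p]$, the last row survives the filters defining $R(\cdot)$ and passes the \texttt{MINCLASS} test, and the reverse inclusion comes directly from that test. Your row-swap argument that $M_{p+1}>M_i$ for all $i\le p$ fills a step the paper's proof skips, since the paper only says that $R(M_{1:m-1})$ holds all orthogonal vectors and ignores the ``greater than the last row'' condition.
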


\begin{proof}
    The proof of the first part is by induction on $m$. For $m=1$ this is clear, as $MPIW(1,n,k)$ is the list of all minimum vectors is $SOKS$. Assuming validity for $m-1$, we enter the for loop at $p=m-1$ (line 11) with $MinPIW(m-1)=MPIW(m-1,n,k)$ by the induction hypothesis. Suppose that $M\in MPIW(m,n,k)$. Then by Theorem \ref{part_min} $M_{1:m-1}\in MinPIW(m-1)$. The list $R(M_{1:m-1})$ holds all vectors that are orthogonal to $M_{1:m-1}$. Thus the vector $M_m$ enters the list $R(M_{1:m-1})$ (line 13) and passes the minimality test (line 17), allowing $M$ to enter the list $MinPIW(m)$ (line 18). This proves that $MinPIW(m)\supseteq MPIW(m,n,k)$. The opposite inclusion is clear as line $18$ allows only minimum matrices. This proves the first assertion. The second assertion follows easily, as we do not always perform the minimality test, but yet the minimum matrices pass all tests. 
\end{proof}

\section{H-equivalences and Automorphisms} \label{sec:aut}

We can rephrase the notion of H-equivalence as follows. 

\begin{definition}
    Two $PIW(m,n,k)$ $A,B$ are \emph{H-equivalent} if there are monomial matrices over $\{0,-1,1\}$, $L\in \Mon(m)$ and $R\in \Mon(n)$ such that $B=LAR^\top$. If $A,B$ are square, then we say that $A,B$ are \emph{TH-equivalent} if $A$ is H-equivalent to $B$ or to $B^\top$. We also say that square $A,B$ are \emph{symmetric Hadamard equivalent (SH-equivalent)} if we can take $B=LAL^\top$, $L\in \Mon(m)$.
\end{definition}
These are easily seen to be equivalence relations on the space of all integer matrices, and they preserve the space of PIW. An H-equivalence is also called as \emph{isomorphism} of PIW. An \emph{automorphism} of a PIW $A$ is a self isomorphism, i.e. a pair $(L,R)$ of monomial matrices satisfying $A=LAR^\top$. The set of all automorphisms is closed under pairwise matrix multiplication and inversion, hence forms a group, called the \emph{automorphism group} and denoted by $\Aut(A)$. This group has a subgroup consisting of only unsigned permutations. We denote this subgroup by $\PermAut(A)$.

In this section we show how to compute isomorphisms and automorphisms of PIWs. The primitive we shall use is the graph isomorphism problem. The method we present here is not new, and is used by many authors, see e.g.\cite{O_Cathain2011-wh}. For convenience we chose to present it in detail here. Let $A\in \ZZ^{m\times n}$ be any integer matrix. Let $I_2$ denote the $2\times 2$ identity matrix, $J_2$ the $2\times 2$ all 1's matrix. For any integer $m\in \ZZ$ we define 
$$e(m)\ := \ \begin{cases}
    mI_2 & m\ge 0\\
    m(I_2-J_2) & m<0
\end{cases}.$$

The \emph{extended matrix} $E(A)$ is the $2m\times 2n$ matrix given by $E(A)=(e(A_{i,j}))_{i,j}$. It satisfies $E(A^\top)=E(A)^\top$, $E(LA)=E(L)E(A)$ and $E(AR)=E(A)E(R)$ for monomial matrices $L,R$. $E(-)$ does not respect matrix multiplication in general. Notice that for monomial matrices $M$, $E(M)$ is a permutation matrix. We have the following easy lemma.

\begin{lemma}\label{lem:EAut}
    For an integer matrix $A$, the map $(L,R)\mapsto (E(L),E(R))$ defines an injection $E_*:\Aut(A)\hookrightarrow \PermAut(E(A))$.
\end{lemma}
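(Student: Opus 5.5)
The plan is to verify three things directly from the listed properties of $E(-)$: the map lands in $\PermAut(E(A))$, it is a group homomorphism, and it is injective.

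\textbf{Well-definedness.} Let $(L,R)\in\Aut(A)$, so $A=LAR^\top$ with $L\in\Mon(m)$ and $R\in\Mon(n)$. Since $L$ and $R^\top$ are monomial, the stated identities $E(LA)=E(L)E(A)$ and $E(AR)=E(A)E(R)$ apply twice, giving
$$E(A)=E(LAR^\top)=E(L)E(AR^\top)=E(L)E(A)E(R^\top)=E(L)E(A)E(R)^\top,$$
where the last step uses $E(R^\top)=E(R)^\top$. Since $E(L)$ and $E(R)$ are permutation matrices, $(E(L),E(R))\in\PermAut(E(A))$.

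\textbf{Homomorphism property.} The group law on both sides is componentwise multiplication, so it suffices to show that $E$ is multiplicative on monomial matrices: $E(MN)=E(M)E(N)$ for $M,N$ monomial. This is the identity $E(AR)=E(A)E(R)$ with $A=M$ and $R=N$, since that identity only requires $R$ to be monomial.

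If one does not want to rely on that identity in this generality, one can check it directly. A monomial $M$ is $P\,\diag(\epsilon)$ with $P$ a permutation matrix and $\epsilon_i=\pm1$. Then $E(P)=P\otimes I_2$, $e(1)=I_2$, and $e(-1)=J_2-I_2$, which is the swap matrix $\sigma$. Hence $E(M)$ is the permutation of the $2n$ indices sending the pair $i$ to the pair $\pi(i)$, with the two coordinates swapped exactly when the sign is $-1$. Products of such signed permutations correspond to products of these pair-permutations, because $\sigma^2=I_2$ mirrors $(-1)^2=1$.

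\textbf{Injectivity.} Since $E_*$ is a homomorphism, it is enough to show that $E$ is injective on monomial matrices. This holds because $E$ acts entrywise, replacing each entry $m$ by the block $e(m)$, and $e(0)=0$, $e(1)=I_2$, $e(-1)=J_2-I_2$ are pairwise distinct. So $M$ can be read back from $E(M)$, and $E(L)=I$ and $E(R)=I$ force $L=I$ and $R=I$.

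\textbf{Main obstacle.} The only delicate point is the multiplicativity step, that is, whether the paper's identities cover products of two monomial matrices. I expect the direct description of $E(M)$ as a pair-permutation with swaps, as above, to settle it cleanly.
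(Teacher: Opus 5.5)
Your proof is correct and takes the same approach as the paper. The homomorphism property comes from $E$ being multiplicative on products with monomial matrices, and injectivity of $E_*$ comes from injectivity of $e$, hence of $E$. You also verify explicitly that $(E(L),E(R))$ lands in $\PermAut(E(A))$, a step the paper leaves implicit.
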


\begin{proof}
    This map is a homomorphism as $E$ respects multiplication by monomial matrices. The map $e:\ZZ\to M_2(\ZZ)$ is clearly injective, and so is the map $E$.
\end{proof}

We can also describe the image of $E_*$.
In the following we shall write $I$ for the identity matrix without specifying the size, which should be clear from the context. The matrix $E(L)$ clearly commutes with the matrix $E(-I)$, since $L$ commutes with $-I$. We prove

\begin{lemma}\label{lem:Ecomm}
    A pair $(P,Q)\in \PermAut(E(A))$ is in the image of $E_*$, if and only if both $P,Q$ commute with $E(-I)$.
\end{lemma}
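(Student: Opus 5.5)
The forward direction is immediate. If $(P,Q)=(E(L),E(R))$, then $E(L)E(-I)=E(-L)=E(-I)E(L)$, using that $E$ is multiplicative on monomial matrices. The same holds for $Q$. So the content of the lemma is the converse.

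For the converse, the first step is to describe the permutations commuting with $E(-I)$. Note that $E(-I)=\diag(J_2-I_2,\ldots,J_2-I_2)$ is the permutation swapping $2i-1\leftrightarrow 2i$ in each block, i.e. a fixed-point-free involution $\sigma$. A permutation $P$ commutes with $\sigma$ iff it maps $\sigma$-orbits (the pairs $\{2i-1,2i\}$) to $\sigma$-orbits. So $P$ is a block permutation matrix whose $(i,\pi(i))$ block is either $I_2$ or $J_2-I_2$. These are exactly $E(L)$ for the monomial $L$ with $L_{i,\pi(i)}=\pm1$ accordingly, since $e(1)=I_2$ and $e(-1)=J_2-I_2$. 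So $P=E(L)$ and $Q=E(R)$ for unique monomial $L,R\in\Mon$.

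It remains to show that $(L,R)\in\Aut(A)$, i.e. $LAR^\top=A$. We have $E(LAR^\top)=E(L)E(A)E(R)^\top=PE(A)Q^\top=E(A)$. Here I use the stated multiplicativity of $E$ against monomial matrices on both sides, together with $E(R^\top)=E(R)^\top$. Injectivity of $E$ then gives $LAR^\top=A$. This requires that $E$ be entrywise injective, and $e$ is: $e(m)$ for $m\ge0$ is $mI_2$, and for $m<0$ it has zero diagonal and off-diagonal $-m>0$, so distinct integers give distinct blocks. Hence $(P,Q)=E_*(L,R)$.

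The only step needing care is the characterization of the centralizer of $E(-I)$ among permutation matrices. One must check that a block permutation matrix with $2\times2$ permutation blocks is exactly $E$ of a signed permutation. The rest is formal from Lemma \ref{lem:EAut} and the listed compatibilities of $E$.
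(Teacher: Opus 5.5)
Your proof is correct and takes the same route as the paper. You show that a permutation commuting with $E(-I)$ permutes the coordinate pairs, read off monomial $L,R$ with $E(L)=P$ and $E(R)=Q$ via $e(\pm 1)$, and conclude $LAR^\top=A$ from $E(LAR^\top)=PE(A)Q^\top=E(A)$ together with the injectivity of $E$; you also write out the forward direction and the injectivity of $e$, which the paper leaves as clear.
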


\begin{proof}
    The `only if' part is clear, so we only prove the `if' part. Suppose that $P,Q$ both commute with $E(-I)$. We will first show that $P=E(L)$ for a suitable monomial $L$. Let the size of $P$ be $2n$. The matrix $E(-I)$ is a permutation matrix of the involution interchanging $2i$ with $2i+1$ for all $0\le i\le n-1$. By the commutation assumption, $P$ corresponds to a permutation on the set $\{1,2,\ldots,2n\}$, taking each unordered pair $\{2i,2i+1\}$ to some unordered pair $\{2j,2j+1\}$. The correspondence $i\to j$ defines a permutation $\pi$, and we let $L_{\pi(i),i}=L_{j,i}=1$ if $P$ takes $2i$ to $2j$, $L_{j,i}=-1$ if $P$ takes $2i$ to $2j+1$, and $L_{j,i}=0$ otherwise. This defines a monomial matrix $L$ and from the definition $E(L)=P$. Similarly there is a monomial $R$ such that $E(R)=Q$. At this point we know that $(P,Q)=(E(L),E(R))$ for suitable monomials $L,R$, and moreover, $L$ and $R$ are unique. It remains to show that $(L,R)$ is an automorphism. We know that $E(LAR^\top)=E(L)E(A)E(R)^\top=PE(A)Q^\top=E(A)$. By the injectivity of $E$, $LAR^\top=A$.
\end{proof}

The proof gives us a way to compute $\Aut(A)$. We compute $\PermAut(E(A))$ and $\Aut(A)$ is isomorphic to the centralizer of $(E(-I),E(-I))$. Each centralizing element corresponds to an automorphism of $A$ by the recipe given in the proof.

\subsection{Computing $\PermAut(E(A))$}\label{subsec:PermAut}
We now reduce the problem to computation of automorphism of graphs. This is done by treating $E(A)$ as an adjacency matrix of a directed weighted bipartite graph. If $A$ has size $m\times n$, we define the adjacency matrix $\mathcal B(A)$ as the matrix 
$$ \mathcal B(A)\ := \  \begin{bmatrix}
    0 & E(A)\\ 0 & 0
\end{bmatrix}.$$ This is a matrix of size $(2m+2n)\times (2m+2n)$ defining a directed weighted bipartite graph $\mathcal G(A)$ on a vertex set $X\sqcup Y$ with $|X|=2m$, $|Y|=2n$ and edges going out from $X$ and in to $Y$, with weights specified by the matrix. The automorphisms of $\mathcal G(A)$ are permutation matrices $\Pi$, satisfying $\Pi(\mathcal B(A))\Pi^\top=\mathcal B(A)$. Each automorphism $\Pi$ is a block sum $\Pi=P\oplus Q$, and satisfies $PE(A)Q^\top=E(A)$, conversely, every pair $(P,Q)\in \PermAut(E(A))$ defines an automorphism $\Pi=P\oplus Q$. Thus we have an isomorphism $$\Aut(\mathcal G(A))\cong \PermAut(E(A)),$$ given by this recipe.\\

For the computation of $\Aut(\mathcal G(A))$ we use the \texttt{Sagemath} implementation .automorphism\_group() in the graph theory library.

\subsection{Computing equivalences.} Let $A,B$ be two integer matrices, which we want to test for H-equivalence. To find the equivalence if any, we compute the extended matrices $E(A)$ and $E(B)$ and the directed bipartite graphs $\mathcal G(A)$ and $\mathcal G(B)$. Let us denote $\Isom(A,B)$ the set of H-equivalences from $A$ to $B$, $\PermIsom(R,S)$ the set of permutation two sided equivalences from $R$ to $S$, and $\Isom(\mathcal G,\mathcal H)$ the set of graph isomorphisms between the two weighted directed graphs $\mathcal G$ and $\mathcal H$. Like with automorphisms, there are an injection and an isomorphism $\Isom(A,B)\hookrightarrow \PermIsom(E(A),E(B))\cong \Isom(\mathcal G(A),\mathcal G(B))$. The image of the injection is the set of all pairs that commute with $(E(-I),E(-I))$. The proofs are similar to the ones of Lemmas \ref{lem:EAut}, \ref{lem:Ecomm} and \S \ref{subsec:PermAut}, and we do not repeat them here. In our implementation we use the \texttt{Sagemath} command .is\_isomorphic() which can also supply the isomorphism. From this isomorphism we track back to the isomorphism between $A$ and $B$, as explained above.

\subsection{Primitive matrices}
For the rest of the paper, we restrict the discussion to square IW-matrices.
It could happen, even for small matrices, that the automorphism group will be huge, which may be a hurdle for some computational tasks. To give an extreme example, the automorphism group of $A=kI_n$ is the collection of all pairs $(L,L)$, where $L$ ranges over the entire set $\Mon(n)$. It is advisable to restrict attention first to primitive matrices. The general case is taken care of by Theorem \ref{thm:wreath} below. In the following we shall write $A\oplus B$ for the block diagonal sum of the square blocks $A$ and $B$, and  similarly for longer block sums. Let $A^{\oplus r}$ denote the block diagonal sum of $r$ copies of $A$.

\begin{definition}
    An IW-matrix $A$ is said to be \emph{primitive}, if it is not H-equivalent to a diagonal block sum of smaller blocks. Otherwise we say that $A$ is \emph{imprimitive}.
\end{definition}
By induction any matrix is up to H-equivalence a direct sum primitive matrices. For any integer matrix $A$, let $\chi(A)$ be its characteristic matrix, which is the matrix of the same size defined by $\chi(A)_{i,j}=1$ if $A_{i,j}\neq 0$ and $\chi(A)_{i,j}=0$ otherwise. Let $\mathcal B_A$ be the (directed unweighted) bipartite graph defined by $\chi(A)$. An H-equivalence between $A$ and $B$ induces an isomorphism between $\mathcal B_A$ and $\mathcal B_B$.  We have

\begin{lemma}
    Let $A$ be an IW. Then $A$ is primitive if and only if the graph $\mathcal B_A$ is connected.
\end{lemma}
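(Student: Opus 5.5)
We will prove the two implications separately, assuming $k\ge 1$ (for $k=0$ the only $IW(n,0)$ is the zero matrix and the statement degenerates). The basic observation is that for $A\in IW(n,k)$ with $k\ge1$ we have $AA^\top=kI$. Hence $A$ is invertible with $A^{-1}=\frac1k A^\top$, and therefore also $A^\top A=kI$. Consequently every row and every column of $A$ has squared norm $k>0$, so no row or column is zero. In graph terms, every vertex of $\mathcal B_A$ has degree at least $1$.

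For the direction ``primitive $\Rightarrow$ connected'', argue by contraposition. Suppose $A$ is imprimitive, so $A\sim_H B\oplus C$ with $B,C$ square blocks of positive size. The graph $\mathcal B_{B\oplus C}$ has no edges between the row/column vertices of the $B$ block and those of the $C$ block, because the off-diagonal blocks are zero. Both vertex sets are nonempty, so this graph is disconnected. As noted before the lemma, an H-equivalence induces an isomorphism $\mathcal B_A\cong\mathcal B_{B\oplus C}$, because row and column negations do not change $\chi$ and permutations just relabel vertices. Hence $\mathcal B_A$ is disconnected.

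For the converse, suppose $\mathcal B_A$ is disconnected and pick a connected component with row vertices $R_1$ and column vertices $C_1$. Since every vertex has degree $\ge1$, each component contains at least one row vertex and at least one column vertex. So $R_1$ and $C_1$ are nonempty, and because some other component exists, their complements are nonempty too. Permute rows so that $R_1$ comes first and columns so that $C_1$ comes first. No edges leave the component, so the resulting H-equivalent matrix has the shape $\begin{bmatrix} X & 0\\ 0 & Y\end{bmatrix}$ with $X$ of size $|R_1|\times|C_1|$.

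The one point needing care, and the main obstacle, is that these blocks are square. We settle it from the orthogonality relations. The upper-left block of $AA^\top=kI$ gives $XX^\top=kI_{|R_1|}$, so $\operatorname{rank}X=|R_1|\le|C_1|$. The upper-left block of $A^\top A=kI$ gives $X^\top X=kI_{|C_1|}$, so $|C_1|\le|R_1|$. Thus $X$ is square and $X\in IW(|R_1|,k)$, and likewise $Y$ is square. This exhibits $A$ as H-equivalent to a block sum of two smaller blocks, i.e.\ $A$ is imprimitive.
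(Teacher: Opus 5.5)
Your proof is correct and follows essentially the same route as the paper, which also moves a connected component into the upper-left corner and uses the nonsingularity of $A$ to force the two blocks to be square. Your use of $XX^\top=kI$ and $X^\top X=kI$ is a more explicit version of that step, and your remark that no row or column is zero supplies a point the paper leaves implicit. One cosmetic slip: your first paragraph actually proves ``imprimitive $\Rightarrow$ disconnected'' (i.e.\ connected $\Rightarrow$ primitive) and your second proves primitive $\Rightarrow$ connected, so the two labels are swapped, but both implications are covered.
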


\begin{proof}
    Let $A=IW(n,k)$. Suppose first that $A$ is primitive. By way of contradiction, after permuting the rows and columns of $A$, we assume that the vertex sets $\{0,1,\ldots c-1 \}$ and  $\{0,1,\ldots,d-1\}$ form the two sides of a connected component of the graph for $c<n$. The remaining matrix entries are supported in $\{c,\ldots,n\}\times \{d,\ldots,n\}$. Since the matrix is nonsingular, we must have that $c=d$, and the matrix $A$ can be permuted on rows and columns to be a block diagonal sum of smaller blocks, a contradiction. In the opposite direction, if $A$ is non-primitive, then  $A\sim_H A_1\oplus A_2$ for smaller matrices, hence $\mathcal B_A\cong \mathcal B_{A_1\oplus A_2}$, and the graph is disconnected.
\end{proof}

It follows from the lemma, that if $A_i$ are primitive $IW$, then $\mathcal B_{A_i}$ are the connected components of $\mathcal B_{A_1\oplus\cdots\oplus A_r}$. As a consequence we can prove 

\begin{theorem}[Primitive Decomposition]\label{thm:prim}
    Let $A$ be an IW matrix. Then up to H-equivalence $A$ is a direct block sum of primitive $IW$s. The decomposition is unique in the following sense. If $A_1\oplus \cdots \oplus A_r \sim_H A'_1\oplus\cdots\oplus A'_s$ for primitive $A_i,A'_i$, then $r=s$ and there is a permutation $\pi\in S_r$ such that for all $i$, $A_i\sim_H A'_{\pi(i)}$.
\end{theorem}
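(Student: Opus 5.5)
Existence is by induction on $n$: if $A$ is primitive we are done. Otherwise, by definition, $A\sim_H B\oplus C$ with $B,C$ square and smaller. The blocks are themselves IW: $(B\oplus C)(B\oplus C)^\top=BB^\top\oplus CC^\top=kI$ forces $BB^\top=kI$ and $CC^\top=kI$. Apply the induction hypothesis to $B$ and $C$. H-equivalences of the blocks assemble into an H-equivalence of the direct sum, using $L_1\oplus L_2$ and $R_1\oplus R_2$, which are again monomial. This gives the decomposition.

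For uniqueness, the plan is to read the summands off the connected components of the bipartite graph $\mathcal B$. Suppose $X=A_1\oplus\cdots\oplus A_r$ and $Y=A'_1\oplus\cdots\oplus A'_s$, with $Y=LXR^\top$ for monomial $L,R$. By the preceding lemma each $\mathcal B_{A_i}$ is connected. The support of $X$ is block diagonal, so the connected components of $\mathcal B_X$ are exactly the $\mathcal B_{A_i}$; each component uses the rows and columns of one block. Likewise the components of $\mathcal B_Y$ are the $\mathcal B_{A'_j}$.

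The H-equivalence $(L,R)$ induces a graph isomorphism $\mathcal B_X\to\mathcal B_Y$. This isomorphism sends row vertices to row vertices via the permutation underlying $L$, and column vertices to column vertices via the permutation underlying $R$. It therefore maps components bijectively onto components, which gives $r=s$ and a permutation $\pi$ with $\mathcal B_{A_i}\mapsto\mathcal B_{A'_{\pi(i)}}$.

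The remaining step, and the only slightly delicate one, is to show that this componentwise matching is an actual H-equivalence $A_i\sim_H A'_{\pi(i)}$. Let $I_i,J_i$ be the row and column index sets of block $i$ in $X$. Then $L$ maps the rows in $I_i$ bijectively onto the row set of $A'_{\pi(i)}$ in $Y$, and $R$ does the same for columns. Restrict $L$ to the submatrix $L_i$ with rows in the image of $I_i$ and columns in $I_i$; it is monomial. Define $R_i$ similarly.

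Now compute the entries of $Y=LXR^\top$ in the block for $A'_{\pi(i)}$. Each such entry is a signed entry of $X$ whose row lies in $I_i$ and whose column lies in $J_i$, which is an entry of $A_i$. Hence $A'_{\pi(i)}=L_iA_iR_i^\top$.

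One also has to check that the sizes match, i.e. $|I_i|=|J_i|$ and likewise for the images. This holds because the blocks are square: nonsingularity of an IW matrix forces each component to have equally many row and column vertices, exactly as in the lemma. With this, the bookkeeping of restricting monomial matrices to component index sets completes the proof.
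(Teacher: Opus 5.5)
Your proposal is correct and takes essentially the same approach as the paper. Existence is by induction on size, and uniqueness matches the connected components of the support graph $\mathcal B$ via the permutation parts $(|L|,|R|)$ of the H-equivalence. You also spell out two details the paper leaves implicit: that the blocks are themselves IW, and that restricting $L,R$ to the block index sets gives $A'_{\pi(i)}=L_iA_iR_i^\top$.
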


For the proof we set up some notation. We write a monomial matrix $M\in \Mon(n)$ uniquely as a product $M=SP$, where $P$ is a permutation matrix and $S$ is a diagonal $\{\pm 1\}$-matrix. We write $P=|M|$. If $(L,R)$ is an H-equivalence from $A$ to $B$, then $(|L|,|R|)$ is the isomorphism between $\chi(A),\chi(B)$ as adjacency matrices of bipartite graphs.

\begin{proof}
    First we prove primitive decomposition. If $A$ is primitive then it is its own primitive decomposition. Or else, $A\sim_H A_1\oplus A_2$ for smaller size matrices. By induction on the size $A_1,A_2$ have primitive decompositions, and we finish since $B_1\sim_H C_1$ and $B_2\sim_H C_2$ implies $B_1\oplus B_2\sim_H C_1\oplus C_2$.\\

    To prove uniqueness, notice first that $r=s$ is the number of connected components of $\mathcal B_{A_1\oplus\cdots\oplus A_r}\cong \mathcal B_{A'_1\oplus\cdots\oplus A'_s}$, and the $\mathcal B_{A_i}$ and separately $\mathcal B_{A'_i}$ correspond to the connected components of each graph. Let $(L,R)$ be the H-equivalence between the block sums. Let $P=|L|$ and $Q=|R|$ be the permutation matrices subordinate to $L,R$. Then $(P,Q)$ gives the isomorphism between the total bipartite graphs, and must take connected components to connected components. Thus for every $i$, $(P,Q)$ takes the block $\chi(A_i)$ to another block $\chi(A'_{\pi(i)})$ for some function $\pi$. The function $\pi$ is a bijective matching between the sets of connected components of both graphs, hence it is a permutation. Now the H-equivalence $(L,R)$ must take $A_i$ to $A'_{\pi(i)}$ and induce an isomorphism between them. This completes the proof.
\end{proof}

\begin{corollary}
    Two IW matrices are H-equivalent if and only if the multisets of H-equivalence classes of their primitive components are equal.$\Box$
\end{corollary}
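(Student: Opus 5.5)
Both directions follow quickly from Theorem \ref{thm:prim}, together with the elementary fact that H-equivalence is compatible with block sums.

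\emph{Sufficiency.} Suppose $A\sim_H A_1\oplus\cdots\oplus A_r$ and $B\sim_H B_1\oplus\cdots\oplus B_r$ are primitive decompositions. Assume there is a permutation $\pi\in S_r$ with $A_i\sim_H B_{\pi(i)}$, witnessed by monomial pairs $(L_i,R_i)$.
\begin{enumerate}
\item The block sums $L=\bigoplus L_i$ and $R=\bigoplus R_i$ are monomial and carry $A_1\oplus\cdots\oplus A_r$ to $B_{\pi(1)}\oplus\cdots\oplus B_{\pi(r)}$.
\item Reordering diagonal blocks is realized by the same block permutation applied to both rows and columns. This gives an H-equivalence $B_{\pi(1)}\oplus\cdots\oplus B_{\pi(r)}\sim_H B_1\oplus\cdots\oplus B_r$.
\item By transitivity, $A\sim_H B$.
\end{enumerate}

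\emph{Necessity.} Suppose $A\sim_H B$. Then $A_1\oplus\cdots\oplus A_r\sim_H B_1\oplus\cdots\oplus B_s$, again by transitivity. The uniqueness part of Theorem \ref{thm:prim} applies directly. It gives $r=s$ and a permutation $\pi$ with $A_i\sim_H B_{\pi(i)}$. This is exactly equality of the multisets of classes.

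The only subtlety is that the multiset must be well defined, i.e. independent of the chosen decomposition. This also follows from the uniqueness part of Theorem \ref{thm:prim}, applied with $B=A$. I do not expect any real obstacle; the substantive work is already done in Theorem \ref{thm:prim}.
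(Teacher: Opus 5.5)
Your proposal is correct and matches the paper, which records this corollary as an immediate consequence of Theorem \ref{thm:prim}. As in the paper, sufficiency comes from the compatibility of $\sim_H$ with block sums and block reordering, while necessity and the well-definedness of the multiset come from the uniqueness part of that theorem.
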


The primitive decomposition allows us to compute the automorphism group of an IW matrix $A$, given the primitive decomposition of $A$ and the automorphism group of each primitive component. It suffices to assume that $A=A_1\oplus\cdots\oplus A_r$, for primitive $A_i$. In the following, let $H\wr S_r$ denote the wreath product of $S_r$ on $H^r$, which is the semidirect product $H^r\rtimes S_r$ where $S_r$ acts on $H^r$ by permuting coordinates. Another well-known description of the wreath product is given in terms of group monomial matrices. Let $H$ be any group. An $H$-monomial $n\times n$ matrix is a $\{0\}\cup H$-valued $r\times r$ matrix, with one nonzero entry in each row and column. We denote this set by $\Mon(H,r)$. It is closed under matrix multiplication and isomorphic to $H\wr S_r$. We encode each element of $H^r\wr S_r$ as a pair $((h_i)_i,\pi)$, where $h=(h_i)\in H^r$ and $\pi\in S_r$ is a permutation. This pair corresponds to the $H$-monomial matrix whose $(\pi(i),i)$ coordinate is $h_i$.

\begin{theorem} \label{thm:wreath}
    Let an IW 
    $$A\ \sim_H\ A_1^{\oplus r_1} \oplus A_2^{\oplus r_2}\oplus \cdots \oplus A_s^{\oplus r_s}.
    $$ 
    for primitive $A_i$, such that for $i\neq j$, $A_i$ and $A_j$ are not H-equivalent. Then 
    \be \Aut(A)\ \cong \ \prod_{t=1}^s \Aut(A_t)\wr S_{r_t}.
    \ee 
    In particular $|\Aut(A)|=\prod_{t=1}^s |\Aut(A_t)|^{r_t}r_t!$.
\end{theorem}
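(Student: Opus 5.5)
Since H-equivalence conjugates automorphism groups ($B=LAR^\top$ gives $\Aut(B)=(L,R)\Aut(A)(L,R)^{-1}$), we may assume $A=A_1^{\oplus r_1}\oplus\cdots\oplus A_s^{\oplus r_s}$ exactly. The plan is to write down an explicit injective homomorphism $\Phi:\prod_t \Aut(A_t)\wr S_{r_t}\to\Aut(A)$ and then show it is onto using the connected-component argument from the proof of Theorem \ref{thm:prim}.

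\textbf{Constructing $\Phi$.} It suffices to handle one factor $\Aut(A_t)\wr S_{r_t}$ acting on the block $A_t^{\oplus r_t}$ and to act by the identity on the other blocks; the factors for different $t$ then commute and together give a map from the product. Let $n_t$ be the size of $A_t$. An element $(((L_i,R_i))_i,\pi)$ is sent to the pair of block-monomial matrices $(\mathbf L,\mathbf R)$. Here $\mathbf L$ has the block $L_i$ in block position $(\pi(i),i)$ and zeros elsewhere, and $\mathbf R$ is built the same way from the $R_i$. This is exactly the group-monomial description of the wreath product given before the theorem, with $H=\Aut(A_t)\subset \Mon(n_t)\times\Mon(n_t)$ realised as genuine monomial matrices. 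Block matrix multiplication shows $\Phi$ is a homomorphism. Since $L_iA_tR_i^\top=A_t$, we get $\mathbf L(A_t^{\oplus r})\mathbf R^\top=A_t^{\oplus r}$, because block $(\pi(i),\pi(i))$ receives $L_iA_tR_i^\top$. Injectivity is immediate, because the blocks and their positions are recovered from $\mathbf L$ and $\mathbf R$.

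\textbf{Surjectivity.} This is the main step. Let $(L,R)\in\Aut(A)$, and take $P=|L|$ and $Q=|R|$. As in the proof of Theorem \ref{thm:prim}, $(P,Q)$ is an automorphism of $\mathcal B_A$. By the lemma characterising primitivity, the connected components of $\mathcal B_A$ are exactly the graphs $\mathcal B$ of the individual diagonal blocks, and each component carries its row vertices and its column vertices in the same block. So $(P,Q)$ permutes components, sending the row set and the column set of block $j$ to the row set and the column set of a single block $\sigma(j)$. Therefore $L$ and $R$ are block monomial matrices for the same block permutation $\sigma$, with monomial blocks $L_j$ and $R_j$.

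Comparing blocks in $LAR^\top=A$ gives $L_jA_{(j)}R_j^\top=A_{(\sigma(j))}$, where $A_{(j)}$ denotes the $j$-th diagonal block. Thus $A_{(j)}\sim_H A_{(\sigma(j))}$. By hypothesis $A_t\not\sim_H A_{t'}$ for $t\ne t'$, so $\sigma$ preserves each group of $r_t$ copies of $A_t$ (which in particular have equal sizes). Restricted to the $t$-th group, $\sigma$ gives some $\pi_t\in S_{r_t}$. Each block pair $(L_j,R_j)$ now satisfies $L_jA_tR_j^\top=A_t$, so it lies in $\Aut(A_t)$. Hence $(L,R)=\Phi\bigl(\prod_t(((L_j,R_j))_j,\pi_t)\bigr)$. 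The main care needed here is to check that the same permutation $\sigma$ governs both the rows and the columns, which holds because each component contains rows and columns of a single block.

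\textbf{Order formula.} This follows from $|H\wr S_r|=|H|^r\,r!$.
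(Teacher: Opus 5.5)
Your proof is correct and follows essentially the same route as the paper: reduce to the literal block sum, then use the connected components of the support graph (as in the proof of Theorem \ref{thm:prim}) to show that every automorphism is block monomial, with a single block permutation for rows and columns that preserves the copies of each $A_t$, and identify the result with $\prod_t \Mon(\Aut(A_t),r_t)$. The only difference is direction: the paper defines $\nu:\Aut(A)\to\prod_t\Mon(\Aut(A_t),r_t)$ and writes down its inverse, while you define $\Phi$ in the opposite direction and verify injectivity and surjectivity (and you spell out why rows and columns share the same block permutation, which the paper only asserts).
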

For the proof we use the notation $A[I]$ to mean the rectangular submatrix of $A$, whose row index set is $I$, and $A[I\times J]$ for the submatrix with entries in $(i,j)\in I\times J$. $I$ and $J$ are taken with the induced ordering from the integers.
\begin{proof}
    Since the isomorphism type of $\Aut(A)$ depends only on the class $[A]$, we may assume that $A=A_1^{\oplus r_1}\oplus A_2^{\oplus r_2}\oplus\cdots\oplus A_s^{\oplus r_s}$. From Theorem \ref{thm:prim} and its proof we know that any automorphism of $A$ must map each copy of $A_i$ to some copy of $A_i$. In more precise terms, suppose that $I_i^j\times I_i^j$ is index set of $j$th block of each $A_i$. If $(L,R)$ is an automorphism, then for each $i,j$ there exists an index $k=k(i,j)$ such that $L[I_i^k]$ is supported on $I_i^k\times I_i^j$, $R[I_i^k]$ is supported on the same $I_i^k\times I_i^j$, $L[I_i^k\times I_i^j]$ and $R[I_i^k\times I_i^j]$ are hence monomial, and $(L[I_i^k\times I_i^j],R[I_i^k\times I_i^j])$ is an automorphism of $A_i$. This gives a mapping $\nu:\Aut(A)\to \prod_{i=1}^t \Mon(\Aut(A_i),r_i)$, where for each $i$ the underlying permutation $\pi_i\in S_{r_i}$ satisfies $\pi_i(j)=k(i,j)$,  and the $\Aut(A_i)$-monomial matrix is $((L( I^{\pi_i(j)}_i\times I_i^j),R(I^{\pi_i(j)}_i\times I_i^j)_j,\pi_i)$. Since matrix multiplication respects blocks, this mapping is a homomorphism. We can also construct the inverse map. Given a data $(((L_{i,j},R_{i,j})_i,\pi_i))$, where $(L_{i,j},R_{i,j})\in \Aut(A_i)$, we construct a monomial pair $(L,R)$ supported on $\bigcup_{i,j} I_i^j\times I_i^{\pi_i(j)}$, where $L$ is the block sum of $L_i := ((L_{i,j})_j,\pi_i)$ and $R$ is a diagonal block sum or $R_i\ := ((L_{i,j})_j,\pi_i)$. The pairs $(L_i,R_i)$ are each an automorphism of $A^{\oplus r_i}$, and $(L,R)\in \Aut(A)$ is a $v$-preimage of the given data. 
\end{proof}

\subsection{Verifying the automorphism group and isomorphisms}
Our way of computing isomorphisms and automorphism groups relies on the graph theory library of \texttt{Sagemath}. We would like to present here a complementary approach, which supplies independent proofs that two IW-matrices are (non-)isomorphic, and a certification that we have indeed computed the full automorphism group. We feel that our paper will be more complete if we are doing this practice. This addition is not intended to replace the previous approach. It is meant to be supplement, and in fact it is most efficient when applied as an augmentation to the graph theory output.  Let us treat first the isomorphism problem.

If our graph theory based program returns an isomorphism between $A$ and $B$, then we can readily check it, and there is no problem there. However, how do we add an independent proof to a non-isomorphism? To this end we introduce the \emph{Code Invariant}. For an integral matrix $D\in [-L,L]^{d\times n}$ let $Code(D)=Code_L(D)=\mathbf{b}D$, where $\mathbf{b}=[b^{d-1},\ldots,b^2,b,1]$, $b=2L+1$. The correspondence $D\mapsto Code(D)$ is an injection on $[-L,L]$ integer matrices. We write $D\prec_{d}M$ if $D$ is a $d\times n$ submatrix of $M$. For any matrix $M\in [-L,L]^{r\times n}$, we define the \emph{code invariant} to be 
$$ CodeInv(M,d)\ := \ \text{Multiset}\{Code(Min(D))\ | \ D\prec_d M   \}.
$$
This is clearly an H-equivalence invariant of $M$. When $CodeInv(A,d)\neq CodeInv(B,d)$, this is a proof that $A$ is not isomorphic to $B$. Practically, for the small scale matrices we consider in this paper, $CodeInv(-,4)$ is almost always sufficient.\\

Let us turn now to the more difficult question of validating the automorphism group. Our graph-theory based program returns a subgroup $G=\{(L_i,R_i)\}\le \Aut(A)$ of automorphisms, for which we can readily check that they are indeed automorphisms. 
The question is: Is $G$ the full group of automorphisms? We want to have an algorithm that decides on this question, and further if not, output at least one (new) element in $\Aut(A)\smallsetminus G$. Our algorithm will be based on the following primitive: Let $X$ be a finite set on which $\Aut(A)$ acts. The following lemma is trivial.
\begin{lemma}
    Suppose that $X=X_1\cup X_2\cup \cdots\cup X_r$ is the orbital decomposition of $X$ with respect to the action of $G$. Choose elements $x_i\in X_i$. Then $\Aut(A)\neq G$ if one of the two incidents occur:
    \begin{itemize}
        \item[(i)] There exists an element $\sigma\in \Aut(A)$ such that $\sigma x_1=x_j$ for $j>1$;
        \item[(ii)] There exist an element $\sigma\in \Aut(A)\smallsetminus G$ such that $\sigma x_1=x_1$.$\Box$ 
    \end{itemize}
\end{lemma}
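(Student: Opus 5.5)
The statement is elementary: we exhibit, in each case, an element of $\Aut(A)$ that does not lie in $G$. The only facts needed are that $G\le\Aut(A)$ is a subgroup, that $\Aut(A)$ acts on $X$, and that the $X_i$ are the $G$-orbits of this action.

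\textbf{Case (i).} Suppose $\sigma\in\Aut(A)$ satisfies $\sigma x_1=x_j$ with $j>1$. If $\sigma$ were in $G$, then $\sigma x_1$ would lie in the $G$-orbit of $x_1$, namely $X_1$. But $x_j\in X_j$, and $X_j$ is disjoint from $X_1$ because distinct orbits are disjoint. This contradiction shows $\sigma\notin G$, so $\sigma\in\Aut(A)\smallsetminus G$ and $\Aut(A)\neq G$.

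\textbf{Case (ii).} Here the conclusion is immediate: the hypothesis itself supplies an element $\sigma\in\Aut(A)\smallsetminus G$, so $\Aut(A)\neq G$.

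There is no genuine obstacle; the only care needed is the observation that $G$-orbits partition $X$, so no element of $G$ can carry $x_1$ out of $X_1$.

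A partial converse is worth recording, since it is presumably what makes the lemma usable algorithmically. Suppose $\Aut(A)\neq G$, and assume also that both (i) and (ii) fail. Failure of (i) means every $\tau\in\Aut(A)$ maps $x_1$ into $X_1$. Take $\tau\in\Aut(A)\smallsetminus G$. Then $\tau x_1=gx_1$ for some $g\in G$. The element $g^{-1}\tau$ fixes $x_1$, and $g^{-1}\tau\notin G$ because $\tau\notin G$ and $g\in G$. So $g^{-1}\tau$ witnesses (ii), a contradiction.

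Hence, provided case (i) is checked against every representative $x_j$, $j>1$, conditions (i) and (ii) together characterize the equality $\Aut(A)=G$.
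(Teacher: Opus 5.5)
Your proof is correct and is exactly the trivial argument the paper intends (it states the lemma with $\Box$ and gives no proof): in (i) no element of $G$ can carry $x_1$ outside its orbit $X_1$, and (ii) is immediate. Your partial converse is also right, and it is what justifies Algorithm \ref{alg:newaut} declaring $G=\Aut(A)$. One small step there should be spelled out: ``failure of (i) means every $\tau$ maps $x_1$ into $X_1$'' uses $G\le\Aut(A)$, since if $\tau x_1=gx_j$ with $j>1$ then $g^{-1}\tau\in\Aut(A)$ would witness (i).
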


Let $\nn:=\{0,1,\ldots,n-1\}$ and let $\Perm(n)$ denote the group of permutation matrices on $\nn$. There is a group homomorphism $\Aut(A)\to \Perm(n)$ given by $(L,R)\mapsto |L|$. This homomorphism need not be injective, but the map $(L,R)\mapsto L$ is an injection $\Aut(A)\to \Mon(n)$, since $R$ can be recovered from $L$ by the nonsingularity of $A$. Let $\nn^{(r)}$ be the set of all distinct $r$-tuples taken from $\nn$. As our working set we will take $X=X_r:=\nn^{(r)}$, on which $\Aut(A)$ acts via the action of $|L|$ on $\nn$. The parameter $r$ can be chosen to optimize the algorithm. Suppose that we are given a set of generators $\{g_i\}$ of $G$. To find the $G$-orbits in $X$, we construct a graph $\Gamma(\{g_i\},X)$, whose vertex set is $X$, and $(a,b)\in X^2$ is an edge, if and only if $b=g_ia$ for some $i$. The $G$-orbits are the connected components of the graph. The following algorithm outputs all $\sigma\in \Aut(A)$ satisfying $\sigma x_1=x_j$.  Let $\Sigma_r$ be the set of all diagonal sign matrices of size $r$. If $S\in \Sigma_r$ and $I\subseteq \langle n\rangle$ is a subset with $|I|=r$, let $S_!\in \Sigma_n$ be the sign diagonal matrix with $S_{!,k_i,k_i}=S_{i,i}$ for $I={k_1\le \cdots \le j_r}$ and $S_{!,j,j}=1$ otherwise. Recall that $A[I]$ is the submatrix of $A$ relative to the row index set $I$. We say that matrices $A,B$ are row (resp. column) equivalent if $A=LB$ for a monomial $L$ (resp. $A=BR^\top$ for a monomial $R$). It is easy to check row equivalence. We just normalize the nonzero rows of both $A,B$ to begin with a negative entry and then compare the multisets of rows.\\

\begin{algorithm}
    \caption{Finding $\sigma\in \Aut(A)\smallsetminus G$ satisfying $\sigma x_1=x_j$.}\label{alg:newaut}
    \begin{algorithmic}[1]
        \Procedure{\texttt{NewAut}}{$X,G,r$}
            \For{a basepoint $x_j$ in orbit \#$j$ of $X_r$}
                \State $B_1\gets A[x_1]$ and $B_j\gets A[x_j]$
                
                \For{$S\in \Sigma_r$}
                    \State $B_1^{ord}\gets Ord(Neg(SB_1))$ and $B_j^{ord}\gets Ord(Neg(B_j))$
                    \If{$B_1^{ord}\neq B_j^{ord}$}
                        \State Continue \Comment{ $SB_1$ and $B_j$ are not column equivalent}
                    \EndIf
                    \State Set $\nn=J_1 \cup\cdots\cup J_r$ a partition by equal columns of $B_j^{ord}$. 
                    \State Let $R_1^{ord}\in \Mon(n)$ satisfy $SB_1R_{1}^{ord}=B_1^{ord}$.
                    \State Let $R_j^{ord}\in \Mon(n)$ satisfy $B_jR_{1}^{ord}=B_j^{ord}=B_1^{ord}$.
                    \For{$k=1$ to $r$}
                        \If{$B_1^{ord}(J_k)=0$}
                            \State $\Gamma_k\gets Mon(J_k)$
                        \Else 
                            \State $\Gamma_k\gets \Perm(J_k)$
                        \EndIf
                    \EndFor
                    \State \Comment{$R_1^{ord}$ is not unique. Can be modified by elements of $\prod_k \Gamma_k$}
                    \State \Comment{The next block prunes $\prod_k \Gamma_k$}
                    \For{$k=1$ to $r$}
                        \State $B_j^{k,ord}\gets (AR_j^{ord})[\nn\smallsetminus J_k\times x_j]$
                        \State $B_1^{k,ord}\gets (AR_1^{ord})[\nn\smallsetminus J_k\times x_j]$
                        \State $Prune_k\gets \{ \}$
                        \For{$\gamma_k\in \Gamma_k$}
                            \If{$B_j^{k,ord}\gamma_k^\top$ is row equivalent to $B_1^{k,ord}$}
                                \State Add $\gamma_k$ to $Prune_k$
                            \EndIf
                        \EndFor
                    \EndFor
                    \State \Comment{We enumerate on $\prod_k Prune_k\subset \Mon(n)$}
                    \For{$Q\in \prod_k Prune_k$}
                        \If{$S_!AR_1^{ord}$ is row equivalent to $AR_j^{ord}Q^\top$}
                            \State Let $P\in \Mon(n)$ s.t. $PS_!AR_1^{ord}=AR_j^{ord}Q^\top$
                            \State $(L,R)\gets (PS_!,R_j^{ord}(R_i^{ord}Q)^\top)$
                            \If{$(L,R)\notin G$} \Comment{If $j\neq 1$ this will always be the case; no need to check. If $j=1$, only need to check that it is not in the point stabilizer of $x_1$}
                                \State Return (L,R) \Comment{This is a new automorphism}
                            \EndIf
                        \EndIf
                    \EndFor
                \EndFor
                \State Return True \Comment{No new automorphisms. $G=\Aut(A)$}
            \EndFor
        \EndProcedure
    \end{algorithmic}
\end{algorithm}

Let us explain how algorithm \ref{alg:newaut} works and prove it. Enumerating over a basepoint $x_j$ in each orbit of $X_r$ (line 2), suppose that $(L,R)$ is an automorphism of our IW matrix $A$, with $L$ taking $x_1$ to $x_j$ and $(L,R)\notin G$. In particular it induces an isomorphism $(S,R)$ between $B_1:=A[x_1]$ and $B_j:=A[x_j]$. Moreover $S$ is only a signature matrix, since $x_1,x_j$ are ordered tuples. In line $4$ we enumerate over $S$. Now $SB_1$ and $B_j$ must be column equivalent, which is the same as to say that $B_1^{ord}:=Ord(Neg(B_1))=B_j^{ord}:=Ord(Neg(B_j))$. If this is not the case (line 6), we move on to the next $S$. We let $J_1\cup J_2\cup\cdots\cup J_r$ be the partition of $\nn$ according to the equal columns in $B_j^{ord}=B_1^{ord}$. Now is the time to find $R$. The issue is that $R$ need not be because of equal columns. In lines 9-10 we define $R_1^{ord}$ and $R_j^{ord}$ that bring $SB_1$ and $B_j$ respectively to $B_1^{ord}$. Our desired $R$ is now $R_1^{ord}QR_j^{ord\top}$ for $Q\in \prod_k \Gamma_k$, where $\Gamma_k$ is the permutation group (or the monomial group for zero-columns) (lines 12-18). This product group may be too large, and enumerating on $Q$ can thus be too heavy. Lines 20-29 prune this group down to a much more manageable size. The idea is treat each $J_k$ separately and enumerate on $\gamma_k\in \Gamma_k$. Then we test if the residual matrices at $(\langle n\rangle \setminus x_1)\times J_k$ and $(\langle n\rangle \setminus x_j)\times J_k$ are row equivalent w.r.t. our choice of $\gamma_k$ (line 26). The $\gamma_k$ that survive define the set $Prune_k$. This reduces the search to the smaller product $\prod_k Prune_k$. Enumerating on elements $Q$ in this group yield the candidates for $R$, and for each candidate we check (line 33) that the two matrices are row equivalent (which should now reveal $L$, line 34). If this is the case, we have an automorphism $(L,R)$ and we finally check whether it belongs to $G$. If $j\neq 1$, this will automatically be true, since $x_j$ is not in the same $G$-orbit of $x_1$. For $j=1$, we only needs to check that $(L,R)$ is not in the point stabilizer of $x_1$. Otherwise, we proceed to the next candidate $S$ and $Q$. If no automorphism outside $G$ was found, we declare that $\Aut(A)=G$.\\

Our use of this algorithm is as follows. It either proves that $\Aut(A)=G$, or finds a new element in $\Aut(A)\smallsetminus G$. In the latter case we abort the algorithm, add our new element to $G$, generate a new group (renaming it $G$) and feed it back to Algorithm \ref{alg:newaut}. This algorithm may also be used to compute $\Aut(A)$ in the first place, without appealing to the graph theory method, but it is slower, so we only use it to validate that the graph theory program has computed the correct group.

The complexity of this validation algorithm is largely affected by the choice of $r$ in $X=\nn^{(r)}$. If $r$ is too small, then we could have many zero-columns in $B_1^{ord}$, which will make $Prune_k$ big. If $r$ is too large, then the enumeration on $S$ will take more time. The choices of $x_1$ and $x_j$ could matter too.

\section{Classifying Symmetric and Antisymmetric IW}\label{sec:5}
We can use the automorphism group to find symmetric and anti-symmetric IW matrices. Moreover, we are able to classify all (anti-)symmetric $IW(n,k)$ up to \emph{symmetric Hadamard equivalence}. We denote an (anti-)symmetric (integer) weighing matrix by $SIW(n,k)$,$AIW(n,k)$, $SW(n,k)$ and $AW(n,k)$ respectively.

\begin{definition}
        Two (anti-)symmetric IW matrices $A,B$ are \emph{symmetric Hadamard equivalent} (SH-equivalent) in short, if there exists a monomial matrix $M$ such that $B=MAM^\top$.
        The set of all $M$ such that $A=MAM^\top$ is a group under matrix multiplication, and is called the \emph{symmetric automorphism group} of $A$. We denote this group by $\SAut(A)$   
\end{definition}

Let us first discuss an easier task, of finding all (anti-)symmetric $IW(n,k)$ up to H-equivalence. This means that we need to find an (anti-)symmetric representative in each H-equivalence class, if one exists. We say that an H-equivalence class $[A]$ is \emph{symmetric} if $A\sim_H A^\top$. This is easily seen to be a property of the class, and independent of the chosen representative $A$. A necessary condition that $[A]$ will contain an (anti-)symmetric matrix is that it is a symmetric class. This condition turns out to be not sufficient, but surprisingly for the small scale classes we considered, in most cases a symmetric class contained a symmetric matrix. Anti symmetric matrices occur less often, and there are cases where a class contains both a symmetric and an anti-symmetric member. 

Suppose now that $[A]$ is a symmetric class, and we would like to find an (anti-) symmetric $S\in [A]$. Let $(L,R)$ be an isomorphism from $A$ to $A^\top$, that is, $A^\top=LAR^\top$. 

\begin{proposition}\label{prop:symm}
    If $A\in IW(n,k)$ satisfies $A^\top=LAR^\top$ for monomial $(L,R)$. Then $[A]$ contains a symmetric matrix, if and only if there exists a monomial $M$ such that $(M^\top L,MR)\in \Aut(A)$, in which case $MA\in [A]$ is symmetric. Likewise, $[A]$ contains an anti-symmetric matrix, if and only if there exists a monomial $M$ such that $(-M^\top L,MR)\in \Aut(A)$, in which case $MA\in [A]$ is anti-symmetric.
\end{proposition}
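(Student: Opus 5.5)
The proof is a direct computation with monomial matrices, using only the hypothesis $A^\top=LAR^\top$ and the fact that a monomial $M$ satisfies $M^{-1}=M^\top$. I will first treat the symmetric case and then obtain the anti-symmetric case by inserting a sign.

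\emph{Sufficiency.} Suppose $(M^\top L, MR)\in\Aut(A)$. By definition of automorphism this means $M^\top L A R^\top M^\top = A$. Substituting $LAR^\top=A^\top$ gives $M^\top A^\top M^\top=A$. Multiplying on the left by $M$ yields $A^\top M^\top = MA$, that is, $(MA)^\top=MA$. Since $MA=MAI^\top$ is an H-equivalence, $MA\in[A]$ is a symmetric member of the class.

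\emph{Necessity.} Suppose $B=PAQ^\top\in[A]$ is symmetric with $P,Q$ monomial. The key observation is that conjugation $X\mapsto Q^\top X Q$ by a monomial matrix preserves symmetry. Hence $Q^\top B Q = Q^\top P A$ is symmetric. Setting $M:=Q^\top P$, which is monomial, we get $(MA)^\top=MA$, i.e.\ $A^\top M^\top = MA$. Reversing the computation above:
\[ M^\top L A R^\top M^\top = M^\top A^\top M^\top = M^\top M A = A, \]
so $(M^\top L, MR)\in\Aut(A)$. The only slightly non-mechanical step is this reduction of an arbitrary two-sided equivalence $PAQ^\top$ to the one-sided form $MA$ via symmetric conjugation. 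I expect this to be the main point to get right, but it is elementary.

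\emph{Anti-symmetric case.} The same argument applies verbatim with signs. The condition $(-M^\top L,MR)\in\Aut(A)$ reads $-M^\top A^\top M^\top=A$, which is equivalent to $(MA)^\top=-MA$. Conversely, monomial conjugation preserves anti-symmetry, so an anti-symmetric $PAQ^\top$ yields an anti-symmetric $MA$ with $M=Q^\top P$, and the computation above gives $(-M^\top L,MR)\in\Aut(A)$.
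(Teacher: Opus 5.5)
Your proof is correct and is essentially the paper's argument: the same direct substitution of $A^\top=LAR^\top$ into the automorphism condition, run in both directions, with the anti-symmetric case obtained by inserting a sign. Your explicit reduction of an arbitrary symmetric member $PAQ^\top$ to the one-sided form $Q^\top PA$ by monomial conjugation is a step the paper's proof leaves implicit (its converse starts from ``$MA$ is symmetric''); the paper only states it later, as a simple observation at the start of the classification subsection.
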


This proposition reduces the search for (anti-)symmetric representatives, to a search over the automorphism group. The following corollary states the existence of previously unknown $SW$ matrices.

\begin{corollary}
    There exist symmetric weighing matrices $SW(23,16),SW(30,17)$,\\
    $SW(28,25)$ and an anti-symmetric weighing matrix $AW(28,25)$.
\end{corollary}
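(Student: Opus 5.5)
The corollary is an existence statement, so I plan to prove it by exhibiting explicit matrices and checking them directly. The certificate in each case will be a matrix $S$ with entries in $\{0,\pm1\}$ satisfying $SS^\top=kI$ and $S^\top=S$, or $S^\top=-S$ in the anti-symmetric case. Verifying such an $S$ is routine and independent of how it was found. The real work is producing it, and I would do that with Proposition \ref{prop:symm}.

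The first step is to choose, for each pair $(n,k)\in\{(23,16),(30,17),(28,25)\}$, one or more known weighing matrices $A\in W(n,k)$. Candidates would come from the literature on classifications and constructions of weighing matrices cited in the introduction, or, failing that, from Algorithm \ref{alg:repiw} run with a small `mindepth' and followed by completion. For each candidate I would build the graphs $\mathcal G(A)$ and $\mathcal G(A^\top)$ of \S\ref{subsec:PermAut} and test whether the class $[A]$ is symmetric. If it is, the isomorphism routine, read back through Lemma \ref{lem:Ecomm}, returns a pair $(L,R)$ of monomial matrices with $A^\top=LAR^\top$. I would also compute $\Aut(A)$ as the centralizer of $(E(-I),E(-I))$ in $\PermAut(E(A))$, and optionally certify it with Algorithm \ref{alg:newaut}.

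The second step is the search that Proposition \ref{prop:symm} reduces to. I am looking for a monomial $M$ with $(M^\top L,MR)=(L',R')\in\Aut(A)$. Since $M$ is determined by $L'$ through $M=(L'L^{-1})^\top$, it suffices to run over $(L',R')\in\Aut(A)$, set $M=(L'L^{-1})^\top$, and test whether $MR=R'$. For the anti-symmetric case I run the same search with $-L$ in place of $L$. Any hit gives $MA$, and the short calculation behind the proposition confirms the symmetry. From $M^\top LAR^\top M^\top=A$ we get $M^\top A^\top M^\top=A$, hence $(MA)^\top=A^\top M^\top=MA$, and the same with a sign flip in the anti-symmetric case. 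Because $M$ is monomial, $MA$ is again a $\{0,\pm1\}$ matrix with $(MA)(MA)^\top=kI$.

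The main obstacle is the first step: finding a class that is symmetric and actually contains a symmetric or anti-symmetric member. The condition is not automatic, and $\Aut(A)$ may be too large to enumerate naively. To handle the size, I would loop over cosets of the stabilizer. It is enough to enumerate $L'$ modulo the kernel of $(L',R')\mapsto L'$, which is trivial, and to prune using the condition $MR=R'$ one row at a time. If a chosen class fails, I would move to other H-classes of $W(n,k)$ until one succeeds. Once an $S$ is found for each of the four cases, the final written proof just lists these matrices (placed in \cite{repo_sym2025}) and notes that the defining identities can be checked directly.
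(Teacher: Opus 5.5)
Your proposal matches the paper's argument. The paper proves the corollary by exhibiting explicit matrices: $SW(23,16)$ from \cite{munemasa2017weighing}, and the others in \cite{repo_sym2025}. All of them were obtained, as you describe, by applying Proposition \ref{prop:symm} to known non-symmetric weighing matrices, and their defining identities can be checked directly. Your derivation of $M=(L'L^{-1})^\top$ from $(L',R')\in\Aut(A)$, and of the (anti-)symmetry of $MA$, is correct.
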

\begin{proof}
The matrix $SW(23,16)$ appears in \cite{munemasa2017weighing}. The other matrices appear in \cite{repo_sym2025}.
 All of these matrices were obtained using the above proposition on a known initially non symmetric weighing matrix.  
\end{proof}

We comment that recently $SW(30,25)$ \cite{Georgiou2023-ks}, $SW(22,16)$ \cite{Topno2025-bp}, $SW(19,9)$,$SW(18,9)$ and $AW(18,9)$ \cite{10.1007/978-3-319-17729-8_19} were found.

\begin{proof}[Proof of Proposition \ref{prop:symm}]
    Let us prove the symmetric part. The anti-symmetric part is similar. Suppose first that such a monomial $M$ exists. Then 
    \begin{multline*}
        (MA)^\top=A^\top M^\top=(LAR^\top)M^\top=LA(MR)^\top\\
        =L(M^\top L)^\top (M^\top L)A(MR)^\top =L(M^\top L)^\top A=MA,
    \end{multline*}
    so $MA\in SIW(n,k)$. Conversely, if $MA$ is symmetric, then 
    $$MA= A^\top M^\top=LAR^\top M^\top=LA(MR)^\top \implies A=(M^\top L)A(MR)^\top.$$
\end{proof}

Proposition \ref{prop:symm} also has a group theoretic facet. If $[A]$ is a symmetric class, we can define a larger automorphism group of $A$, denoted by $\TAut(A)$, which is defined as follows: As a set, $\TAut(A)$ contains all pairs $(L,R)\in \Aut(A)$, and symbols $(L',R')\top$ for each monomial $L',R'$ such that $A=L'A^\top R^{'\top}$. This notation bears the meaning that we first transpose $A$ and then apply $(L',R')$. The multiplication rules are $(L_1,R_1)\top \cdot (L_2,R_2)=(L_1R_2,L_2R_1)\top$; $(L_1,R_1)\top \cdot (L_2,R_2)\top=(L_1R_2,L_2R_1)$, $(L_1,R_1)\cdot (L_2,R_2)\top=(L_1L_2,R_1R_2)\top$, and $(L_1,R_1)(L_2,R_2)=(L_1R_1,L_2R_2)$. These rules define a group structure on $\TAut(A)$ and the group laws are compatible with the operation of $\top$ as the transpose operator. We have a short exact sequence of groups
\be \label{eq:exact}
1\to \Aut(A)\to \TAut(A)\to \ZZ/2 \to 1.
\ee
Now we have
\begin{proposition}\label{prop:split}
    A symmetric class $[A]$ contains a symmetric matrix, if and only if the exact sequence \ref{eq:exact} splits.
\end{proposition}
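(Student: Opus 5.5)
The plan is to reduce the statement to Proposition \ref{prop:symm}, by identifying splittings of \eqref{eq:exact} with the monomial matrices $M$ that appear there. The multiplication rules for $\TAut(A)$ as printed contain evident typos. So I will first fix a concrete model: I realize $\TAut(A)$ as the group of maps $X\mapsto LXR^\top$ and $X\mapsto LX^\top R^\top$ on $n\times n$ matrices that fix $A$. Composition of such maps gives the group law, and the quotient map to $\ZZ/2$ records whether a transpose occurs. A splitting of \eqref{eq:exact} is then exactly an element $\tau=(L',R')\top\in\TAut(A)$ with $\tau^2=1$. Such a $\tau$ generates a complement of $\Aut(A)$ of order $2$, and conversely any complement is generated by such a $\tau$.

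Next I compute $\tau^2$ for $\tau: X\mapsto L'X^\top R'^\top$. Applying $\tau$ twice gives
$$X\ \mapsto\ L'\bigl(L'X^\top R'^\top\bigr)^\top R'^\top = (L'R')\,X\,(R'L')^\top .$$
So $\tau^2=1$ as a map on all matrices means $L'R'=\pm I$ with the same sign on both sides; equivalently $R'=L'^{-1}=L'^\top$, up to the harmless global sign $-1$. Since $A$ is nonsingular, the element $\tau^2\in\Aut(A)$ is determined by its left component. I will fix the convention that $\tau^2=1$ means $(L'R',R'L')=(I,I)$.

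For the direction ($\Leftarrow$), take an involution $\tau=(L',L'^\top)\top$. It satisfies $A=L'A^\top L'$. Put $S:=L'^\top A$, so that $A=L'S$, and substitute:
$$L'S = A = L'(L'S)^\top L' = L'S^\top L'^\top L' = L'S^\top .$$
Hence $S=S^\top$, and $S\in[A]$ is symmetric. For the direction ($\Rightarrow$), if $S=MA$ is symmetric, then $A=M^\top S=M^\top S^\top=M^\top A^\top M$. Thus $\tau=(M^\top,M)\top$ fixes $A$, and by the computation above $\tau^2=(M^\top M,MM^\top)=(I,I)$, so $\tau$ splits the sequence. This direction also matches Proposition \ref{prop:symm}: there the element $\tau$ is the composite of $(L,R)\top$ with $(M^\top L,MR)\in\Aut(A)$.

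The main obstacle is bookkeeping rather than substance. I must fix a consistent composition convention for the transposed elements; the stated rules appear garbled, and I would note that the split condition is independent of convention. I must also deal with the sign ambiguity: $\tau^2=(-I,-I)$ is also trivial as an automorphism of $A$, since $(-I)A(-I)^\top=A$. Under a convention that identifies $(L,R)$ with $(-L,-R)$, this case must be handled separately. There $L'^\top=-R'$, and the same substitution yields $S^\top=-S$, which is the anti-symmetric case. So I would state the convention that pairs are taken literally, in which case $(-I,-I)\neq(I,I)$ in $\Aut(A)$ and only genuine involutions count. Under that convention the equivalence holds as stated.
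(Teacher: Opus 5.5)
Your argument is correct and is essentially the paper's proof. A splitting amounts to an order-two element $(L,R)\top$; its square being $(I,I)$ forces $R=L^\top$, and then $A=LA^\top L$ makes $L^\top A$ symmetric, with the converse obtained by reversing the steps. Your handling of the garbled multiplication rules and your literal treatment of $(-I,-I)\neq(I,I)$ both agree with how the paper defines $\Aut(A)$ as a set of pairs.

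Two small fixes are needed. First, in the ($\Rightarrow$) direction the identity should read $A=M^\top S^\top=M^\top A^\top M^\top$, not $M^\top A^\top M$; this corrected form is exactly the statement that $(M^\top,M)\top$ fixes $A$. Second, you should justify assuming the symmetric member has the form $MA$, for example by conjugating a symmetric $PAQ^\top$ by $Q$ to get the symmetric matrix $Q^\top PA$.
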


\begin{proof}
    The splitting of the sequence is equivalent to having an element $(L,R)\top\in TAut(A)$ of order $2$. If such an element exists, then $((L,R)\top)^2=(I,I)$ implies $LR=I$, or $R=L^\top$. Then from $A=LA^\top R^\top =LA^\top L$ we obtain that $L^\top A=A^\top L$ is symmetric. All implications can be reversed, so the converse also holds.
\end{proof}

We can say more. When the extension \eqref{eq:exact} splits, there are many ways of splitting the sequence, e.g. sections $s:\ZZ/2\to \TAut(A)$. There is a classical equivalence relation among sections, where two sections $s,s'$ are equivalent, if and only if there is an element $g\in \TAut(A)$ such that $s'(a)=gs(a)g^{-1}$. It is well-known that the equivalence classes are in bijection with the non-abelian cohomology set $H^1(\ZZ/2;\Aut(A))$, see \cite[p. 24 Exer 1]{NeukirchSchmidtWingberg2008} for discussion. As was shown in the proof above, each section yields a symmetric matrix $L^\top A$ where $(L,R)=s(1)$. We denote this matrix by $\sigma(s)$. In our case, the equivalence relation is classifying symmetric members of $[A]$ up to SH-equivalence. 

\begin{proposition}\label{prop:hom}
    Two sections $s,s':\ZZ/2\to \TAut(A)$ are equivalent, if and only if $\sigma(s),\sigma(s')$ are SH-equivalent.  
\end{proposition}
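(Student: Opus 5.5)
The plan is to make the group $\TAut(A)$ concrete by letting its elements act on $n\times n$ matrices. A pair $(L,R)$ acts by $X\mapsto LXR^\top$, and a symbol $(L,R)\top$ acts by $X\mapsto LX^\top R^\top$. Then $\TAut(A)$ is exactly the stabilizer of $A$ under this action, composition of maps gives the group law, and the sequence \eqref{eq:exact} is the obvious one.

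As in the proof of Proposition \ref{prop:split}, a section $s$ is determined by $s(1)=(L,R)\top$ of order $2$. This forces $R=L^\top$, so $s(1)$ is the map $X\mapsto LX^\top L$. The condition that it fixes $A$ reads $A=LA^\top L$, and $\sigma(s)=L^\top A$ is the associated symmetric matrix.

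First I reduce conjugation by all of $\TAut(A)$ to conjugation by $\Aut(A)$. Every $g\in\TAut(A)$ is either in $\Aut(A)$ or of the form $h\,s(1)$ with $h\in \Aut(A)$. Since $s(1)$ commutes with itself, conjugating by $h\,s(1)$ has the same effect on $s$ as conjugating by $h$. So I only need to treat $g=(P,Q)\in\Aut(A)$, that is, $A=PAQ^\top$.

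\emph{Forward direction.} I compute $g\,s(1)\,g^{-1}$ as a composite of maps. Here $g^{-1}$ is $X\mapsto P^\top XQ$, and the composite is
$$X\mapsto P\,L\,(Q^\top X^\top P)\,L\,Q^\top = L' X^\top L'^\top,\qquad L':=PLQ^\top .$$
Since $(PLQ^\top)^\top=QL^\top P^\top$, the right-hand factor is indeed $L'^\top$, as required. Hence $\sigma(s')=L'^\top A=QL^\top P^\top A$. From $A=PAQ^\top$ we get $P^\top A=AQ^\top$, and therefore
$$\sigma(s')=Q\,(L^\top A)\,Q^\top=Q\,\sigma(s)\,Q^\top,$$
which is an SH-equivalence.

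\emph{Converse.} Suppose $s(1)$ comes from $L$ and $s'(1)$ from $L'$, and $\sigma(s')=M\sigma(s)M^\top$, i.e. $L'^\top A=ML^\top AM^\top$. I set $Q=M$ and $P=L'ML^\top$. Then
$$PAQ^\top=L'M L^\top A M^\top=L'L'^\top A=A,$$
using that $L'$ is monomial, hence orthogonal. So $g=(P,Q)\in\Aut(A)$. Moreover
$$PLQ^\top=L'ML^\top LM^\top=L',$$
so by the forward computation $g\,s(1)\,g^{-1}=s'(1)$, and $s,s'$ are equivalent.

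The main obstacle is bookkeeping rather than mathematics. The multiplication rules for $\TAut(A)$ as written in the paper are awkward, so the crucial step is to fix the action-by-maps interpretation consistently. I will then check that it reproduces the order-two characterization $R=L^\top$ from Proposition \ref{prop:split}. Once that is fixed, both directions are the short matrix identities above, using only orthogonality of monomial matrices and the automorphism relation $A=PAQ^\top$.
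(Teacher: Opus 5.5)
Your argument is correct and is essentially the paper's proof: you reduce to $g=(P,Q)\in\Aut(A)$ by replacing $g$ with $g\,s(1)$, compute $g\,s(1)\,g^{-1}=(PLQ^\top,QL^\top P^\top)\top$, and use $P^\top A=AQ^\top$ to get $\sigma(s')=Q\,\sigma(s)\,Q^\top$. Your explicit converse ($Q=M$, $P=L'ML^\top$) spells out what the paper only sketches.

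Two small cautions. First, the composite map is $X\mapsto L'X^\top L'$, which is the element $(L',L'^\top)\top$, not $L'X^\top L'^\top$. Second, you must keep elements of $\TAut(A)$ as formal pairs rather than literal maps. The action on matrices kills $(-I,-I)$, so as maps, order two would only give $R=\pm L^\top$, and $\sigma(s)$ would be defined only up to sign.
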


\begin{proof}
    Suppose that $s'(1)=gs(1)g^{-1}$. If $g\notin\Aut(A)$ then $gs(1)\in \Aut(A)$ and we may replace $g$ with $gs(1)$. Thus we are reduced to the case that $g\in \Aut(A)$. Let $g=(P,Q)$ and $s(1)=(L,R)\top$. Then $s'(1)=(PLQ^\top,QRP^\top)$ and $\sigma(s')=QL^\top P^\top A=QL^\top AQ^\top=Q\sigma(s)Q^\top$, where we have used that $P^\top A=AQ^\top$, since $(P,Q)$ is an automorphism. Hence $s'(1)\sim_{SH}s(1)$. The opposite direction is proved by the same equality, where the condition for it to hold is that $(P,Q)$ is an automorphism.
\end{proof}

\begin{corollary}\label{cor:hom}
    \begin{itemize}
        \item[(a)] The number of the SH-equivalence classes in $[A]$ is zero, or equals the size $|H^1(\ZZ/2;\Aut(A))|$.
        \item[(b)] If $\Aut(A)$ is abelian, the number of SH-equivalence classes in $[A]$ is zero or a power of $2$. 
    \end{itemize}
\end{corollary}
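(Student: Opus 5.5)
Part (a) follows by combining Proposition \ref{prop:split} with Proposition \ref{prop:hom}. If the sequence \eqref{eq:exact} does not split, then by Proposition \ref{prop:split} the class $[A]$ contains no symmetric matrix, so the count is zero. If it splits, consider the map $s\mapsto \sigma(s)$ from sections to symmetric members of $[A]$. This map is surjective: given a symmetric $B=MA\in[A]$, the element $(M^\top,M^\top)\top$ (suitably normalized as in the proof of Proposition \ref{prop:split}) has order $2$ and lies over $1\in\ZZ/2$. Proposition \ref{prop:hom} says that two sections are conjugate if and only if their images are SH-equivalent. Hence $\sigma$ induces a bijection between conjugacy classes of sections and SH-classes of symmetric matrices in $[A]$. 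Since the extension splits, conjugacy classes of sections correspond bijectively to $H^1(\ZZ/2;\Aut(A))$, where $\ZZ/2$ acts on $\Aut(A)$ by conjugation by a fixed section $s_0(1)$. This gives (a). I would also check that every SH-equivalent copy of a symmetric member again lies in $[A]$, which is clear.

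For (b), assume $\Aut(A)=:G$ is abelian. Then $H^1(\ZZ/2;G)$ is an honest abelian group, computed by the standard formula for cyclic groups. Write $\tau$ for the involution of $G$ given by conjugation with $s_0(1)$. Then
$$H^1(\ZZ/2;G)\cong \{g\in G: g\,\tau(g)=1\}/\{h\,\tau(h)^{-1}: h\in G\}.$$
This is a group annihilated by $2$. Indeed, if $g\tau(g)=1$ then $g^2=g\,\tau(g)^{-1}$, which is a coboundary. So $H^1$ is a finite elementary abelian $2$-group, an $\FF_2$-vector space, and its order is a power of $2$. Combining this with (a) gives (b).

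The main obstacle is in (a): making sure that the bijection between sections and symmetric matrices is well defined. Different sections can give literally the same $\sigma(s)$, and distinct symmetric members must come from distinct conjugacy classes. Both points are exactly the content of Proposition \ref{prop:hom}, so the remaining work is only to verify surjectivity of $\sigma$. The identification of section classes with $H^1$ in the split case I will quote from \cite{NeukirchSchmidtWingberg2008}, as the paper does.
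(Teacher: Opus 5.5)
Your proposal is correct and follows the paper's route: (a) comes from Propositions \ref{prop:split} and \ref{prop:hom} together with the standard identification of conjugacy classes of sections with $H^1(\ZZ/2;\Aut(A))$, and (b) from the fact that the abelian group $H^1(\ZZ/2;G)$ is annihilated by $2$, which your cocycle computation $g^2=g\,\tau(g)^{-1}$ makes explicit. Three small corrections. The order-$2$ element attached to a symmetric $MA$ is $(M^\top,M)\top$, not $(M^\top,M^\top)\top$. A general symmetric member $PAQ^\top$ is only SH-equivalent (via $Q^\top$) to $Q^\top PA$, so surjectivity of $\sigma$ holds up to SH-equivalence. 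Finally, $\sigma$ is actually injective on sections because $A$ is nonsingular, so distinct sections never give the same matrix.
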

Part (a) is clear from the discussion above. If $\Aut(A)$ is abelian, the cohomology set is an abelian group, annihilated by $2$. Hence its size is a power of $2$, and (b) follows.

\subsection{Classifying symmetric and antisymmetric IW}
Given a symmetric class $[A]$, we wish now to classify all the (anti-)symmetric members in the class up to SH-equivalence. Without loss of generality, we shall assume that $A$ itself is (anti-)symmetric. A simple observation is that any (anti-)symmetric $S\in [A]$ is SH-equivalent to some $MA$ for a monomial $M$, for if $PAQ^\top$ is (anti-)symmetric, then its monomial conjugation $(Q^\top P)A$ is similarly so. Now, Proposition  \ref{prop:symm} tells us that $(M^\top,M)$ should be an automorphism of $A$. This reduces the question to the following: When is $MA\sim_{SH}A$ for an automorphism $(M^\top, M)\in \Aut(A)$? 

\begin{lemma}\label{lem:symm}
    Let $A$ be an (anti-)symmetric IW matrix. Then the (anti-)symmetric matrix $MA$ where $(M^\top,M)\in \Aut(A)$ is SH-equivalent to $A$, if and only if there exists an automorphism $(P,Q)\in \Aut(A)$, satisfying $Q^\top P=M$.
\end{lemma}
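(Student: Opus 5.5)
We prove both directions by direct manipulation of the defining identities. We use three facts throughout: $A^\top=\epsilon A$ with $\epsilon=\pm1$; $(M^\top,M)\in\Aut(A)$ means $A=M^\top A M^\top$; and $(P,Q)\in\Aut(A)$ means $A=PAQ^\top$. Monomial matrices are orthogonal, so $M^{-1}=M^\top$.

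For the ``if'' direction, suppose $(P,Q)\in\Aut(A)$ with $Q^\top P=M$. We try the conjugation by $Q^\top$:
\[
Q^\top A Q = Q^\top (PAQ^\top) Q = (Q^\top P)A = MA .
\]
Hence $MA=Q^\top A (Q^\top)^\top$, which is an SH-equivalence with $A$. The identity $P A = AQ$ here is just the automorphism relation rewritten; it was already used in the proof of Proposition \ref{prop:hom}.

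For the ``only if'' direction, suppose $MA=NAN^\top$ for some monomial $N$. Rearranging gives $A = M^\top N A N^\top$. We set $P:=M^\top N$ and $Q:=N$, so this reads $A=PAQ^\top$, i.e.\ $(P,Q)\in\Aut(A)$. We want $Q^\top P=M$, but what we get is $N^\top M^\top N$, which is not obviously $M$. So we should choose the pair the other way round: put $Q:=N^\top$, with $P$ to be determined.

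From $MA=NAN^\top$ we get $N^\top M A N = A$, so the pair $(N^\top M, N^\top)$ is an automorphism: $A=(N^\top M)A(N^\top)^\top$. With $P=N^\top M$ and $Q=N^\top$ we get $Q^\top P = N N^\top M = M$, as required.

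The only real work is picking the correct pairing and ordering of $N$ versus $N^\top$ in the ``only if'' step. Note that we never used the hypothesis $(M^\top,M)\in\Aut(A)$ or the (anti-)symmetry of $A$; they only ensure that $MA$ is (anti-)symmetric, so that the lemma is a statement about (anti-)symmetric members of $[A]$. The final write-up should check each transpose and inverse carefully.
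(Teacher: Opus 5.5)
Your proof is correct and is essentially the paper's argument: for the forward direction you take $(P,Q)=(N^\top M,N^\top)$, which is exactly the paper's choice with $T=N$, and for the converse you conjugate by $Q^\top$. Your converse is in fact stated more accurately than the paper's: the paper writes $T=Q$, but $MA=TAT^\top$ actually requires $T=Q^\top$ (equivalently $A=Q(MA)Q^\top$), as your computation $Q^\top AQ=Q^\top PA=MA$ shows.
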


\begin{proof}
    Suppose first that $MA\sim_{SH} A$. Then $MA=TAT^\top$ for some $T$ and then it follows that $(P,Q):=(T^\top M,T^\top)\in \Aut(A)$, and further $Q^\top P=M$. Conversely if $Q^\top P=M$ we can define $T=Q$ and then $MA=TAT^\top$.
\end{proof}
More generally, if $(M_i^\top,M_i)\in \Aut(A)$, $i=1,2$, then $M_1A\sim_{SH} M_2A$ if and only if $\exists (P,Q)\in \Aut(A)$ such that $Q^\top M_1PM_1^\top =M_2M_1^\top$. This justifies the following classification algorithm (Algorithm \ref{alg:symm}) that inputs an IW matrix $A$, and outputs a list of representatives of (anti-)symmetric members of $[A]$ up to SH-equivalence. 

\begin{algorithm}
    \caption{Classifying (anti-)symmetric elements in $[A]$ up to SH-equivalence}\label{alg:symm}
    \begin{algorithmic}[1]
        \Procedure{\texttt{SymClasses}}{A}
            \State $A\gets A'$ for (anti-)symmetric $A'\in [A]$.
            \State $Rep\gets \{A\}$
            \For{$(T,U)\in\Aut(A)$}
                \If{$(T,U)=(M^\top,M)$}
                    \State $New\gets True$
                    \For{$(P,Q)\in \Aut(A)$}
                        \For{$B\in Rep$}
                            \State $M_1\gets BA^{-1}$
                            \If{$Q^\top M_1 P M_1^\top=M^\top M_1$}
                                \State $New\gets False$ 
                            \EndIf
                        \EndFor
                    \EndFor
                    \If{New}
                        \State $Rep\gets Rep\cup \{MA\}$
                    \EndIf
                \EndIf
            \EndFor\\
           \Return{Rep}
        \EndProcedure 
    \end{algorithmic}
\end{algorithm}

\subsection{Primitive and imprimitive classification}

Our SH classification algorithm relies on looping over the automorphism group, which could be large. A better strategy is to apply Algorithm \ref{alg:symm} only to primitive classes and deduce from this the classification in the general case. This is the content of Theorems \ref{thm:symm} and \ref{thm:a-symm}, which we can think of as the primitive decomposition theorems adapted to the symmetric and anti-symmetric cases. 
To ease the discussion, we will discuss in detail only the symmetric case. The anti-symmetric case is identical, except for a minus sign here and there. At the end of this section we will state the main result for the anti-symmetric case. 
The following proposition is an immediate corollary of Theorem \ref{thm:prim}.

\begin{proposition}
    Let $[A]$ be a symmetric IW class. Then 
    \be\label{eq:symmclass}  A\sim_H \bigoplus_{i=1}^r A_i^{\oplus m_i}\oplus \bigoplus_{i=1}^s B_i^{\oplus n_i}\oplus \bigoplus_{i=1}^t (C_i\oplus C_i^\top)^{\oplus o_i}\ee     
    where all blocks are primitive, $A_i$ are symmetric, $[B_i]$ are symmetric but do not contain a symmetric member, $[C_i]$ are non-symmetric, and all $A_i,B_i,C_i,C_i^\top$ are pairwise nonequivalent.$\Box$
\end{proposition}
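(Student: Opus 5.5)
We start from the primitive decomposition of Theorem \ref{thm:prim}: write $A\sim_H P_1\oplus\cdots\oplus P_N$ with each $P_j$ primitive, and group the summands by H-class. Transposition respects block sums, $(P_1\oplus\cdots\oplus P_N)^\top=P_1^\top\oplus\cdots\oplus P_N^\top$. It also preserves primitivity, since $\mathcal B_{P^\top}$ is $\mathcal B_P$ with the two sides swapped, so connectivity is unchanged. Hence $A^\top$ has primitive decomposition $\{P_j^\top\}$. Since $[A]$ is symmetric, $A\sim_H A^\top$, and the uniqueness part of Theorem \ref{thm:prim} (equivalently, its Corollary on multisets) shows that the multiset of H-classes $\{[P_j]\}$ is invariant under the involution $[P]\mapsto[P^\top]$.

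Next we partition the distinct primitive classes occurring in the decomposition according to this involution. First, the classes fixed by it, i.e.\ $P\sim_H P^\top$, are the symmetric classes. Among these, those containing a symmetric member are labelled $A_i$, and we choose the symmetric member as representative. The remaining ones are labelled $B_i$.

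Second, the non-fixed classes come in pairs $\{[C],[C^\top]\}$ with $[C]\neq[C^\top]$. Invariance of the multiset under the involution forces $[C]$ and $[C^\top]$ to occur with the same multiplicity $o$. For each pair we pick one member $C_i$, and its partner is then represented by $C_i^\top$.

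Finally, we reorder blocks by simultaneous row and column permutations, which is an H-equivalence, so that each $C_i$ sits next to a $C_i^\top$. We also replace each summand by the chosen representative of its class, using that $B_1\sim_H C_1$ and $B_2\sim_H C_2$ imply $B_1\oplus B_2\sim_H C_1\oplus C_2$. This yields \eqref{eq:symmclass}. Pairwise nonequivalence of all $A_i,B_i,C_i,C_i^\top$ holds by construction, because we took distinct classes and $[C_i]\ne[C_i^\top]$.

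The only step needing care is the equal multiplicities of $[C]$ and $[C^\top]$. This follows directly from the multiset equality, since the involution maps the occurrences of $[C]$ bijectively onto those of $[C^\top]$. The rest is bookkeeping.
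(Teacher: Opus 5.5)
Your proof is correct and follows the paper's route. The paper states this as an immediate corollary of the primitive decomposition theorem (Theorem \ref{thm:prim}) and gives no further argument. Your proof unpacks that corollary: uniqueness makes the multiset of primitive classes invariant under $[P]\mapsto[P^\top]$; the fixed classes split according to whether they contain a symmetric member, and the swapped pairs $[C],[C^\top]$ occur with equal multiplicity.
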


Now let $[A]$ be a symmetric class. We may assume that $A$ is already in the form given in \eqref{eq:symmclass}. Let 
$$L\ := \ \bigoplus_{i=1}^r I^{\oplus m_i}\oplus \bigoplus_{i=1}^s L_i^{\oplus n_i}\oplus \bigoplus_{i=1}^t \begin{bmatrix}
    0 & I\\ I & 0
\end{bmatrix}^{\oplus o_i}, \text{and } R\ := \  \bigoplus_{i=1}^r I^{\oplus m_i}\oplus \bigoplus_{i=1}^s R_i^{\oplus n_i}\oplus \bigoplus_{i=1}^t \begin{bmatrix}
    0 & I\\ I & 0
\end{bmatrix}^{\oplus o_i},$$
such that $L_iB_iR_i^\top =B_i^\top$ for all $1\le i\le s$. 
In particular $LAR^\top=A^\top$.
To find a symmetric member of $[A]$, we need to find an automorphism $(M^\top L,MR)\in \Aut(A)$. Then $MA$ will be a symmetric member, and running over all such $M$ will produce an exhaustive list of all symmetric members in $[A]$ up to SH-equivalence. Theorem \ref{thm:wreath} tells us that $(M^\top L,MR)$ should preserve the each of the blocks $A_i^{\oplus m_i}$, $B_i^{\oplus n_i}$ $C_i^{\oplus o_i}$ and $C_i^{\top \oplus o_i}$. Thus $M,L,R$ preserve the index sets corresponding to the blocks $A_i^{\oplus m_i}$, $B_i^{\oplus n_i}$, and $(C_i+C_i^\top)^{\oplus o_i}$. It follows that any SH-class in $[A]$ contains a member which is a diagonal block sum partitioned according to these index sets. This justifies reduction of the search to each one of these blocks separately. We will now consider the following three cases.\\ 

\noindent {\large\bf Case I}: $A=A_0^{\oplus m}$ for an $m\times m$ primitive and symmetric IW matrix $A_0$. In this case $L=R=I$ and we are looking for an automorphism $(M^\top,M)$. $M$ is a block monomial matrix, with blocks that are $m\times m$ monomial matrices. Let us write uniquely $M=D|M|$ where $D$ is block diagonal with monomial blocks of size $m\times m$ and $|M|$ is a block permutation matrix with identity blocks $I_m$. Observe that $|M|=|M^\top|=|M|^\top$ since $A$ is block diagonal, showing that $|M|$ is an involution. Reorganizing the blocks in $A$ if necessary, $$|M|\ =\ I^{\oplus a}\oplus \begin{bmatrix}
    0 & I\\ I & 0
\end{bmatrix}^{\oplus b}, a+2b=m,$$ and 
\be\label{eq:M0} M\ = \  \bigoplus_{i=1}^a M_i \oplus  \bigoplus_{i=1}^b \begin{bmatrix}
    0 & P_i\\ Q_i & 0
\end{bmatrix}.\ee  
The condition that $(M^\top,M)$ is an automorphism is equivalent to $(M_i^\top,M_i)$ and $(P_i^\top,Q_i)$ are automorphisms of $A_0$ for all $i$. Finally we see that 
\be  
    MA=\bigoplus_{i=1}^a M_iA_0 \oplus \bigoplus_{i=1}^b \begin{bmatrix}
        0 & P_iA_0 \\ Q_iA_0 & 0
    \end{bmatrix}.
\ee
The condition $(P_i^\top,Q_i)\in \Aut(A_0)$ is necessary and sufficient for $MA$ to be symmetric.\\

\noindent {\large\bf Case II:} $A=A_0^{\oplus n}$, $[A_0]$ symmetric, not admitting a symmetric member, and $LA_0R^\top=A_0^\top$ for monomial $L,R$. Let $\Lambda=L^{\oplus n}$ and $\Sigma=R^{\oplus n}$. Then $\Lambda A\Sigma^\top=A^\top$. Suppose now that $(M^\top \Lambda,M\Sigma)\in \Aut(A)$. As before, $|M|$ is an involution, and after reorganizing the blocks, $M$ has the form as in \eqref{eq:M0} with $a=0$.
\be  
    MA=\bigoplus_{i=1}^{n/2} \begin{bmatrix}
        0 & P_iA_0 \\ Q_iA_0 & 0
    \end{bmatrix}.
\ee
with the condition that $(P^\top L,QR)$ is an automorphism of $A_0$. This condition is necessary and sufficient for $MA$ to be symmetric. Incidentally, $n$ has to be even.\\

\noindent {\large\bf Case III:} $A=(A_0\oplus A_0^\top)^{\oplus n}$ and $[A_0]$ is not symmetric. Let $L=R=\begin{bmatrix}
    0 & I \\ I & 0
\end{bmatrix}$, $\Lambda=L^{\oplus n}$ and $\Sigma=R^{\oplus n}$. Then $\Lambda A\Sigma^\top=A^\top$, and we are looking for a block monomial $M$ such that $ (M^\top\Lambda,M\Sigma)\in \Aut(A)$. In particular $(M^\top,M)$ is an isomorphism from $A^\top$ to $A$, and $(|M|^\top, |M|)$ is a block permutation keeping the diagonal. Thus $|M|=|M|^\top$ is an involution, and further switches between even and odd indices. Then after reorganizing the $A_0^\top$ blocks, we may assume that $|M|=\begin{bmatrix}
    0 & I\\ I & 0
\end{bmatrix}^{\oplus n}$ and $M$ has the same form as \eqref{eq:M0} with $a=0,b=n$. Hence 
\be  \label{eq:caseIII}
    MA=\bigoplus_{i=1}^n \begin{bmatrix}
        0 & P_iA_0 \\ Q_iA_0^\top & 0
    \end{bmatrix},
\ee
and the condition that $(M^\top\Lambda,M\Sigma)\in \Aut(A)$ is equivalent to $MA$ to be symmetric, which is equivalent to $(P_i^\top,Q_i)\in \Aut(A_0)$ for all $i$.\\

We can revise slightly cases I and II. We know in case II that the number of blocks should be even. Thus we can rewrite half of the blocks as $A_0^\top$ up to H-equivalence, and then the treatment is identical to case III. In case I we can simply write $A_0^\top=A_0$ since $A_0$ is symmetric. Therefore, the $2\times 2$ blocks in \eqref{eq:caseIII} are the unified form to write all of the $2\times 2$ blocks in cases I,II and III. The condition $(P_i^\top,Q_i)\in \Aut(A_0)$ is the same condition throughout. 
Moreover, each $2\times 2$ block in all cases is SH-equivalent to $\begin{bmatrix}
    0 & A_0\\ A_0^\top & 0
\end{bmatrix}$ by the symmetric monomial transformation $P_i\oplus I,P_i\oplus I$, and taking account of the relation $P_iA_0=A_0Q_i^\top$. The following proposition summarizes what we have proved so far.

\begin{proposition}\label{prop:class_sym}
    Let $A$ be an IW matrix, such that $[A]$ is a symmetric H-equivalence class. Write the primitive decomposition of $A$ as 
    $$A\sim_H \bigoplus_{i=1}^r A_i^{\oplus m_i}\oplus \bigoplus_{i=1}^s B_i^{\oplus 2n_i}\oplus \bigoplus_{i=1}^t (C_i\oplus C_i^\top)^{\oplus o_i}$$
    with $A_i$ symmetric, $[B_i]$ symmetric, not admitting a symmetric member, $[C_i]$ nonsymmetric, and all $A_i,B_i,C_i,C_i^\top$ are pairwise non H-equivalent.  
    Then every symmetric member of $[A]$ is SH-equivalent to a matrix of the form 
    \be\label{eq:symm_decomp}
         \Sigma\ :=\ \bigoplus_{i=1}^t \bigoplus_{j=1}^{a_i}M_{i,j}A_i \oplus \bigoplus_j \begin{bmatrix}
            0 & X_j\\ X_j^\top & 0
         \end{bmatrix},
    \ee
    where $a_i\le r_i$, $M_{i,j}$ are monomial, $M_{i,j}A_i$ are symmetric, and $X_j\in \{A_i,B_i,C_i\}$ exhausts half of the remaining $A_i$, half of the $B_i$, and all of the $C_i$. 
    $\Box$
    \end{proposition}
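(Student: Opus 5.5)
The proof assembles the block analysis (Cases I--III) carried out just before the statement. Start from a symmetric member $S\in[A]$. As observed before Lemma \ref{lem:symm}, any such $S$ is SH-equivalent to $MA$ for a monomial $M$, where $A$ is taken in the normal form \eqref{eq:symmclass} and $(M^\top L, MR)\in\Aut(A)$ with $L,R$ the explicit block matrices fixed above. The first step is to invoke Theorem \ref{thm:wreath}: since $A_i,B_i,C_i,C_i^\top$ are pairwise non-equivalent primitive blocks, every automorphism permutes only copies of the same primitive class. Hence $(M^\top L,MR)$ stabilizes the index sets of the groups $A_i^{\oplus m_i}$, $B_i^{\oplus 2n_i}$ and $(C_i\oplus C_i^\top)^{\oplus o_i}$. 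Because $L$ and $R$ also stabilize these sets, so does $M$. Therefore $MA$ is a block diagonal sum over these groups, and each summand is itself a symmetric matrix of the corresponding group. This reduces the statement to the three homogeneous cases.

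Next, apply the case analysis to each group. Write $M=D|M|$ with $|M|$ a block permutation; then $|M|$ is an involution, since $|M^\top|$ and $|M|$ must induce the same block permutation. After reordering blocks by a permutation conjugation, which is an SH-operation, $|M|=I^{\oplus a}\oplus\left[\begin{smallmatrix}0&I\\I&0\end{smallmatrix}\right]^{\oplus b}$. The fixed blocks contribute summands $M_{i,j}A_i$ with $M_{i,j}A_i$ symmetric. These arise only in Case I: in Case II no fixed block can occur, since $[B_i]$ has no symmetric member, and in Case III $|M|$ must swap $C$ and $C^\top$ positions. The swapped pairs contribute $\left[\begin{smallmatrix}0&P X\\ Q X^\top&0\end{smallmatrix}\right]$ with $(P^\top,Q)$ (or $(P^\top L_i,QR_i)$) in $\Aut(X)$. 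In Case II, first rewrite half the $B_i$ copies as $B_i^\top$ using $L_iB_iR_i^\top=B_i^\top$, so that all three cases take the Case III form. Counting then gives the claimed multiplicities: $a_i$ fixed copies of $A_i$, $(m_i-a_i)/2$ pairs with $X=A_i$, $n_i$ pairs with $X=B_i$, and $o_i$ pairs with $X=C_i$.

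Finally, conjugate each $2\times2$ block by the symmetric monomial $P\oplus I$. The relation $PX=XQ^\top$, which comes from the automorphism condition, turns $\left[\begin{smallmatrix}0&PX\\QX^\top&0\end{smallmatrix}\right]$ into $\left[\begin{smallmatrix}0&X\\X^\top&0\end{smallmatrix}\right]$. A direct sum of SH-equivalences is an SH-equivalence, so the whole matrix is SH-equivalent to $\Sigma$ in \eqref{eq:symm_decomp}.

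The main obstacle is the bookkeeping in Case II. The relation $L_iB_iR_i^\top=B_i^\top$ must be correctly threaded through the automorphism condition, and one must check that the rewriting of half the copies as $B_i^\top$ is compatible with the SH-equivalence rather than only with H-equivalence. I would handle this by composing the explicit monomials and verifying directly that the resulting conjugating matrix has the form $T(\cdot)T^\top$.
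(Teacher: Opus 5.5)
Your proposal is correct and follows the paper's own argument essentially step by step: block preservation via Theorem~\ref{thm:wreath}, the involution analysis of Cases I--III, the unification of the $2\times 2$ blocks, and the final monomial conjugation to $S_X$. The Case II bookkeeping you flag as the main obstacle is harmless: a symmetric block $\left[\begin{smallmatrix}0&PX\\ X^\top P^\top&0\end{smallmatrix}\right]$ is carried to $S_X$ by $T(\cdot)T^\top$ with $T=P^\top\oplus I$, using only its symmetry, so you never need to rewrite half of the $B_i$ as $B_i^\top$.
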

     For a square matrix $X$, let us denote by $S_X=\begin{bmatrix}
        0 & X\\X^\top & 0
    \end{bmatrix}$.  
    The blocks $M_{i,j}A_i$ in $\Sigma$ of eq. \eqref{eq:symm_decomp} will be called type I blocks. Likewise the blocks $S_X$ for a primitive symmetric $[X]$ are of type II, and those $S_X$ for primitive non symmetric $[X]$ are of type III.\\

    To every symmetric integer matrix $A$ we associate a weighted undirected graph $\mathcal{SG}(A)$ with $A$ as its adjacency matrix. If $A$ abd $B$ are SH-equivalent, the associated graphs $\mathcal{SG}(A)$ and $\mathcal{SG}(B)$ are isomorphic. If $X$ is block of type I,II or III, the associated graph $\mathcal{SG}(S_X)$ is connected. It follows that the decomposition above of $\Sigma$ corresponds to connected component decomposition of the graph $\mathcal{SG}(\Sigma)$.

    \begin{lemma}\label{lem:SX}
        For any two primitive IW matrices $X,X'$, $$X\sim_H X' \vee X^\top\sim_H X' \ \iff S_X\sim_{SH} S_{X'}.$$
    \end{lemma}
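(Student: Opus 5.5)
For the direction ($\Rightarrow$) I would write down explicit symmetric monomial transformations. If $X'=PXQ^\top$, take $T=P\oplus Q$. Then
$$TS_XT^\top=\begin{bmatrix}0 & PXQ^\top\\ QX^\top P^\top & 0\end{bmatrix}=S_{X'}.$$
If instead $X'\sim_H X^\top$, it suffices to show $S_X\sim_{SH}S_{X^\top}$ and then apply the first case. For this, conjugate by the block swap $T=\begin{bmatrix}0&I\\ I&0\end{bmatrix}$, which is a permutation matrix, hence monomial. One checks $TS_XT^\top=\begin{bmatrix}0&X^\top\\ X&0\end{bmatrix}=S_{X^\top}$. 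This direction is routine.

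For ($\Leftarrow$), suppose $S_{X'}=TS_XT^\top$ with $T$ monomial of size $2n$. Write $T=D\Pi$ with $D$ a sign diagonal and $\Pi$ a permutation. Then $\Pi$ is an isomorphism of the weighted graphs $\mathcal{SG}(S_X)\cong\mathcal{SG}(S_{X'})$ up to signs; in particular it is an isomorphism of the underlying unweighted supports. The key step is to use connectivity and bipartiteness. Since $X$ is primitive, the bipartite graph $\mathcal B_X$ is connected by the earlier lemma characterizing primitivity. The support graph of $S_X$ is exactly the bipartite graph with parts $U=\{1,\ldots,n\}$ (rows of $X$) and $V=\{n+1,\ldots,2n\}$ (columns of $X$), and it is connected. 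A connected bipartite graph has a unique bipartition. Hence $\Pi$ maps $\{U,V\}$ onto $\{U',V'\}$, either preserving the two parts or swapping them.

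I then split into these two cases. In the first case $T=T_1\oplus T_2$ with $T_1,T_2$ monomial, and reading off the upper right block gives $X'=T_1XT_2^\top$, so $X\sim_H X'$. In the second case $T=\begin{bmatrix}0&T_1\\ T_2&0\end{bmatrix}$, and computing the upper right block gives $X'=T_1X^\top T_2^\top$, so $X^\top\sim_H X'$.

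The main obstacle is the claim that $\Pi$ respects the bipartition. This needs the support of $S_X$ to be connected, which is precisely where primitivity enters, since an imprimitive $X$ would allow mixing parts across components. It also needs nonsingularity, so that there are no isolated vertices. Given the primitivity lemma, this reduces to the standard uniqueness of the bipartition of a connected bipartite graph. Block sizes match automatically, because both parts have size $n$.
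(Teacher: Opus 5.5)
Your proof is correct. The forward direction is the paper's: conjugation by $P\oplus Q$, together with the block swap $S_I$ to pass from $S_X$ to $S_{X^\top}$. The converse, however, takes a different route.

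The paper discards the symmetric structure. It notes that $S_X\sim_{SH}S_{X'}$ implies $S_X\sim_H S_{X'}$, and that $S_X\sim_H X\oplus X^\top$ (right multiplication by $S_I$). It then invokes the uniqueness part of the Primitive Decomposition Theorem~\ref{thm:prim} to match the multisets $\{[X],[X^\top]\}$ and $\{[X'],[X'^\top]\}$.

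You instead use the fact that one monomial $T$ acts on both sides. Its underlying permutation is therefore an isomorphism between the undirected support graphs of $S_X$ and $S_{X'}$. These graphs are $\mathcal B_X$ and $\mathcal B_{X'}$, and they are connected by primitivity. Uniqueness of the bipartition of a connected bipartite graph then forces $T$ to be block diagonal or block anti-diagonal, and you read the H-equivalence off the blocks.

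Given Theorem~\ref{thm:prim}, the paper's version is a one-line reduction and matches how the rest of that section is organized. Yours is self-contained and constructive: it produces the equivalence $(T_1,T_2)$ explicitly from $T$. The diagonal/anti-diagonal dichotomy you establish is also exactly the structural fact the paper derives again from the primitive decomposition theorem when proving Proposition~\ref{prop:symautSx}.
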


    \begin{proof}
        Suppose first that $X\sim_H X'$ by a pair $(L,R)$. Then applying $(L\oplus R,L\oplus R)$ to $S_X$ yields $S_{X'}$. When $X^\top\sim_H X'$, this similarly shows that $S_{X^\top}\sim_{SH} S_{X'}$. Since $S_X\sim_{SH} S_{X^\top}$ by $(S_I,S_I)$, then we transitively have $S_X\sim_{SH} S_{X'}$. This shows the `only if' part. Notice that for this part we did not use the primitivity or IW assumptions. Conversely, if $S_X\sim_{SH} S_{X'}$, then $X\oplus X^\top \sim_H S_X\sim_{H} S_{X'}\sim_H X'\oplus X^{'\top}$. By the primitive decomposition theorem we must have $X\sim_H X'$ or $X\sim_H X^{'\top}$. 
    \end{proof}
    Next, we need a proposition telling us what is the symmetric automorphism group of a $2\times 2$ block $S_X$, when $X$ is primitive.

    \begin{proposition} \label{prop:symautSx}
        If $X$ is a primitive IW, then $$\SAut(S_X)\ \cong \ \begin{cases}
            \TAut(X) & [X] \text{ is symmetric}\\
            \Aut(X) & [X] \text{ is not symmetric}
        \end{cases}. $$
    \end{proposition}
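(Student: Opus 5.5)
We will analyze an arbitrary $T\in\SAut(S_X)$, i.e. a monomial $T$ with $TS_XT^\top=S_X$. The first step uses the undirected graph structure. The support graph of $S_X$ is the bipartite graph $\mathcal B_X$, with the two sides being the first $n$ and the last $n$ indices. Since $X$ is primitive, $\mathcal B_X$ is connected by the lemma characterizing primitivity. The permutation $|T|$ is an automorphism of this connected bipartite graph, so it either preserves the two sides or swaps them. Hence $T$ has one of two block forms:
\[
T=\begin{bmatrix} P&0\\0&Q\end{bmatrix}
\qquad\text{or}\qquad
T=\begin{bmatrix}0&P\\Q&0\end{bmatrix}
\]
with $P,Q$ monomial. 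In the first case $TS_XT^\top=S_X$ reads $PXQ^\top=X$, so $(P,Q)\in\Aut(X)$. In the second case it reads $PX^\top Q^\top=X$, which is exactly an element $(P,Q)\top\in\TAut(X)$. Conversely, every element of $\Aut(X)$, and every transposed symbol in $\TAut(X)$, gives such a $T$, and the map is injective.

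Next we define $\Phi:\TAut(X)\to\SAut(S_X)$ by
\[
(P,Q)\mapsto P\oplus Q,\qquad (P,Q)\top\mapsto \begin{bmatrix}0&P\\Q&0\end{bmatrix},
\]
and check that it is a homomorphism. Multiplying the block matrices gives $(P_1\oplus Q_1)(P_2\oplus Q_2)=P_1P_2\oplus Q_1Q_2$. It also gives
\[
\begin{bmatrix}0&P_1\\Q_1&0\end{bmatrix}\begin{bmatrix}0&P_2\\Q_2&0\end{bmatrix}=P_1Q_2\oplus Q_1P_2,
\]
and the two mixed products similarly. We then compare these with the multiplication rules of $\TAut$ as stated in the paper. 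Those rules appear to contain typos; for instance, the last rule should read $(L_1L_2,R_1R_2)$. So part of this step is to fix the correct group law, namely composition of the transformations $A\mapsto LAR^\top$ and $A\mapsto L'A^\top R'^\top$. I expect this bookkeeping, getting the convention for $(L,R)\top$ to match block multiplication, to be the main obstacle. The safest route is to define $\TAut(X)$ intrinsically as the group of such transformations and verify that $\Phi$ respects composition.

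Finally we split into the two cases. If $[X]$ is not symmetric, no monomial pair takes $X^\top$ to $X$, so the swapping form is impossible and $\SAut(S_X)=\Phi(\Aut(X))\cong\Aut(X)$. If $[X]$ is symmetric, both forms occur, and $\Phi$ is a bijective homomorphism from $\TAut(X)$ onto $\SAut(S_X)$, giving $\SAut(S_X)\cong\TAut(X)$. Its restriction to $\Aut(X)$ is the block-diagonal subgroup, which is compatible with the exact sequence \eqref{eq:exact}.
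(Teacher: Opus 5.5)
Your proof is correct and is essentially the paper's argument. Your $\Phi$ is the paper's $\varphi$, since $\varphi((P,Q)\top)=\varphi((P,Q))\varphi(\top)$ is the block anti-diagonal matrix. The key step is also the same: primitivity forces every symmetric automorphism of $S_X$ to be block diagonal or block anti-diagonal.

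The differences are minor. You get this dichotomy directly from the connected undirected support graph of $S_X$ (which is $\mathcal B_X$), whereas the paper applies the primitive decomposition theorem to $T_X=X\oplus X^\top$. You prove surjectivity directly, whereas the paper uses the explicit element $\xi$ and an index-$2$ count. Your correction $(L_1,R_1)(L_2,R_2)=(L_1L_2,R_1R_2)$ is right, and likewise the two rules beginning $(L_1,R_1)\top\cdot$ should have $R_1L_2$ in the second slot.
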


    \begin{proof} Let $G$ denote the group of $n\times n$ TH-operations and define a map $\varphi:G\to Mon(2n)^2$ by $\varphi((L,R))=(L\oplus R,L\oplus R)$ and $\varphi(\top)=(S_I,S_I)$. We can check that $\varphi$ is an injective group homomorphism, with image in the symmetric Hadamard operations. We can further check that $\varphi(\Aut(X))\subseteq \SAut(S_X)$.
        In the case where $[X]$ is non-symmetric, let us show that $\varphi|_{\Aut(X)}:\Aut(X)\to \SAut(S_X)$ is an isomorphism. Let $T_X:=S_XS_I=X\oplus X^\top$. By the primitive decomposition theorem, the automorphisms of $T_X$ are block diagonal, of the form $(P_1\oplus P_2,Q_1\oplus Q_2)$ where $(P_1,Q_1)\in \Aut(X)$ and $(P_2,Q_2)\in \Aut(X^\top)$. This implies that the automorphisms of $S_X$ are of the form $(P_1\oplus P_2,Q_2\oplus Q_1)$. The symmetric automorphisms of $S_X$ must further satisfy $P_2=Q_1$ and $Q_2=P_1$. Every such automorphism is in the image $\varphi$, and the injectivity of $\varphi$ finishes the proof in this case.\\ 

        Now suppose that $[X]$ is symmetric, the primitive decomposition theorem allows extra automorphisms of $T_X$ of block anti-diagonal form (same type on the left and the right) and any automorphism is either block diagonal or block anti-diagonal. The same conclusion passes to $S_X=S_IT_XS_I^\top$. One explicit block anti-diagonal symmetric automorphism is 
        $$ \xi\ := \ \left(\begin{bmatrix}
            0 & R\\ L & 0
        \end{bmatrix}, \begin{bmatrix}
            0 & R\\ L & 0
        \end{bmatrix} \right),$$ 
        for $LXR^\top=X^\top$. Any element of $\SAut(S_X)$ is either $U$ or $\xi \cdot U$ for a block diagonal symmetric automorphism $U$. The diagonal symmetric automorphisms are exactly those that are in $\varphi(\Aut(X))$. Thus $[\SAut(S_X):\varphi(\Aut(X))]=2$. But note that $\varphi(\top (L,R))=\xi$, which shows that $\varphi(\TAut(X))=\SAut(S_X)$, and the claim is proved  in this case either.
    \end{proof}

    We are now in the position to prove the following classification theorem of symmetric members in a class $[A]$. For any set $S$ and an equivalence relation $R$ on $S$, we say that elements $x_1,\ldots,x_m\in S$ form a \emph{classification list} of $R$, if every element in $S$ is $R$-equivalent to precisely one $x_i$. 

    \begin{theorem}[Symmetric case]\label{thm:symm}
        Suppose that $A$ is an IW-matrix, with the primitive decomposition 
        $$A\ \sim_H\ \bigoplus A_i^{\oplus m_i}\oplus \bigoplus B_i^{\oplus 2n_i} \oplus \bigoplus (C_i+C_i^\top)^{\oplus o_i},$$
        such that $A_i$ are symmetric, $[B_i]$ are symmetric, not having a symmetric member, and $[C_i]$ are not symmetric, and such that all $A_i,B_i,C_i,C_i^\top$ are pairwise non H-equivalent. For each $A_i$ let $\{A_i^j\}$ be a classification list of all symmetric members of $[A_i]$ up to SH-equivalence. For each $A_i$ choose nonnegative $a_i,b_i$ such that $a_i+2b_i=m_i$, and choose a multiset $\Lambda_i:=\{A_i^{j_1},\ldots,A_i^{j_{a_i}}\}$. Let $\Lambda=(\Lambda_i)$. Then the matrices
        \be\label{eq:class_sym} \Sigma_\Lambda\ := \bigoplus_{i,\Lambda_i} A_i^{j_k} \oplus \bigoplus \begin{bmatrix}
            0 & A_i\\
            A_i & 0
        \end{bmatrix}^{\oplus b_i}\oplus \begin{bmatrix}
            0 & B_i\\
            B_i^\top & 0
        \end{bmatrix}^{\oplus n_i} \oplus \begin{bmatrix}
            0 & C_i\\
            C_i^\top & 0
        \end{bmatrix}^{\oplus o_i}
        \ee
        form a classification list of all symmetric members of $[A]$ up to SH-equivalence. Let $\{X_i\},\{S_{Y_i}\},\{S_{Z_i}\}$ be the lists of distinct $1\times 1$ or $2\times 2$  blocks of $\Sigma_\Lambda$ of types I,II,III with multiplicities $p_i,q_i,r_i$ respectively. Then 
        \be 
            \SAut(\Sigma_\Lambda)\ \cong \ \prod_i \SAut(X_i)\wr S_{p_i} \times \prod_i \TAut(Y_i)\wr S_{q_i}\times \prod_i \Aut(X_i)\wr S_{r_i}.
        \ee
    \end{theorem}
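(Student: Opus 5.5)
The plan has three parts: existence, uniqueness, and the automorphism group. Existence is essentially Proposition \ref{prop:class_sym}. Every symmetric member of $[A]$ is SH-equivalent to a block sum $\Sigma$ as in \eqref{eq:symm_decomp}. Each type I block $M_{i,j}A_i$ is a symmetric member of $[A_i]$, so by definition of the classification list it is SH-equivalent to some $A_i^{j}$; SH-equivalences of individual blocks assemble into an SH-equivalence of the block sum. Each $2\times 2$ block $S_X$ has already been normalized to $\begin{bmatrix}0&X\\X^\top&0\end{bmatrix}$ with $X$ the chosen representative, via the conjugation by $P_i\oplus I$. Counting blocks gives $a_i+2b_i=m_i$, exactly $n_i$ blocks $S_{B_i}$ (the $B_i$ must pair up, as in Case II), and $o_i$ blocks $S_{C_i}$. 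Hence every symmetric member is SH-equivalent to some $\Sigma_\Lambda$.

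\emph{Uniqueness} (that distinct $\Lambda$ give SH-inequivalent matrices) I would prove through the undirected weighted graph $\mathcal{SG}(\cdot)$. An SH-equivalence $\Sigma_\Lambda=T\Sigma_{\Lambda'}T^\top$ induces, via $|T|$, an isomorphism of $\mathcal{SG}$ graphs, hence a bijection of connected components. The components are exactly the blocks, since each type I block $A_i^{j}$ and each $S_X$ is connected by primitivity. Restricted to a component, $T$ is a monomial block that is an SH-equivalence between the two blocks. Types can be recovered from H-classes: a type I block lies in $[A_i]$ (size $n_i$), while $S_X$ is H-equivalent to $X\oplus X^\top$, which is imprimitive. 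By Theorem \ref{thm:prim}, a type I block cannot match a $2\times2$ block, and two type I blocks match only if they lie in the same $[A_i]$. Within that class, SH-equivalence forces the same list element $A_i^{j}$. The multisets $\Lambda_i$ and $\Lambda'_i$ therefore coincide, so the $b_i$ coincide as well. The step I expect to be the main obstacle is ensuring the component bijection really respects SH rather than just H. This holds because $T$ is a single monomial acting on both sides: its restriction to a component's index set maps it onto another component's index set, so the same restriction acts on both sides. One must also check that no $S_{A_i}$ block could be SH-equivalent to a split pair $A_i^{j}\oplus A_i^{j'}$. This follows from connectedness, since $S_{A_i}$ is a single component while the pair is two.

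For the automorphism group, I would mimic the proof of Theorem \ref{thm:wreath}. By the component argument, every $T\in\SAut(\Sigma_\Lambda)$ permutes blocks, sending each block to an SH-equivalent one. Grouping identical blocks by $X_i$, $S_{Y_i}$, $S_{Z_i}$ with multiplicities $p_i,q_i,r_i$, I obtain a homomorphism into $\prod \Mon(\SAut(\text{block}),\text{mult})$, with inverse given by assembling block-monomial matrices, exactly as before. This yields $\prod \SAut(\text{block})\wr S_{\text{mult}}$. Finally, Proposition \ref{prop:symautSx} identifies $\SAut(S_{Y_i})\cong\TAut(Y_i)$ for symmetric-class $Y_i$ (types I-derived and II $2\times2$ blocks) and $\SAut(S_{Z_i})\cong\Aut(Z_i)$ for non-symmetric $Z_i$. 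This gives the stated formula, where the last factor should read $\Aut(Z_i)\wr S_{r_i}$.
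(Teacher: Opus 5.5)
Your proposal is correct and follows the paper's proof essentially step for step. Existence comes from Proposition \ref{prop:class_sym}, uniqueness from the connected components of $\mathcal{SG}$ with primitivity separating type I blocks from $2\times 2$ blocks, and the automorphism group from the wreath-product argument together with Proposition \ref{prop:symautSx}. When you group identical blocks, you leave implicit that distinct $2\times 2$ blocks are SH-inequivalent; the paper gets this from Lemma \ref{lem:SX}, or equivalently from Theorem \ref{thm:prim} applied to $Y\oplus Y^\top$. You are also right that the last factor should read $\Aut(Z_i)\wr S_{r_i}$.
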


    \begin{proof}
        By Proposition \ref{prop:class_sym}, every symmetric member of $[A]$ is SH-equivalent to one of the $\Sigma_\Lambda$. It remains to show that every the $\Sigma_\Lambda$ are pairwise non SH-equivalent. Since the blocks of $\Sigma_\Lambda$ correspond to the connected components of the graph $\mathcal{SG}(\Sigma_\Lambda)$, any SH-isomorphism between $\Sigma_\Lambda$ and $\Sigma_{\Lambda'}$ carries the $2\times 2$-blocks to $2\times 2$ blocks (since these blocks are not primitive), and consequently $1\times 1$ blocks to $1\times 1$ blocks. It follows that for each $i$, $\Lambda_i=\Lambda_i'$, because the $A_i^j$ are pairwise non SH-equivalent. This shows that $\Lambda=\Lambda'$, and since $\Sigma_\Lambda$ depends only on $\Lambda$, this completes the first part of the proof, that $\{\Sigma_\Lambda\}$ is a classification list.\\

        The blocks $X_i,Y_i,Z_i$ are pairwise non SH-equivalent, due to Lemma \ref{lem:SX}. Since they correspond to connected components of the graph $\mathcal{SG}(S_X)$, any symmetric automorphism of $\Sigma_\Lambda$ must take blocks to identical blocks. Then 
        $$\SAut(\Sigma_\Lambda) \ \cong \ \prod \SAut(X_i^{\oplus p_i})\times \prod \SAut(S_{Y_i}^{\oplus q_i})\times \prod \SAut(S_{Z_i}^{\oplus r_i}).$$
        Indeed, $\SAut(\Sigma_\Lambda)$ lies in this product, since it can only act among identical blocks. On the other hand every element of the product is a symmetric automorphism of the whole matrix. So our proof is reduced to the case where $\Sigma_\Lambda=U^{\oplus m}$ for a single symmetric block $U$ of type I,II or III. Now, $\SAut(U^{\oplus m})\cong \SAut(U)\wr S_m$, since any symmetric automorphism has to permute between the blocks, and can do so freely. Such an automorphism is a unique product of a block permutation, followed by a block diagonal automorphism. The claim for $U$ of type I is tautological. Proposition \ref{prop:symautSx} finishes the proof in cases II and III. 
    \end{proof}

    \subsection{The anti-symmetric case}
    We conclude this section by stating the anti-symmetric classification theorem. The proof is almost identical to the proof of Theorem \ref{thm:symm} and the propositions leading to it, except for a change of sign here and there. We define the \emph{anti-symmetric} double block
    $$A_X\ := \ \begin{bmatrix}
        0 & X\\ -X^\top & 0
    \end{bmatrix}.$$ 
    We continue to use the distinction to types I,II,III, defined analogously.

    \begin{theorem}[Anit-symmetric case]\label{thm:a-symm}
        Suppose that $A$ is an IW-matrix, with the primitive decomposition 
        $$A\ \sim_H\ \bigoplus A_i^{\oplus m_i}\oplus \bigoplus B_i^{\oplus 2n_i} \oplus \bigoplus (C_i+C_i^\top)^{\oplus o_i},$$
        such that $A_i$ are anti-symmetric, $[B_i]$ are symmetric, not having an anti-symmetric member, and $[C_i]$ are not symmetric, and such that all $A_i,B_i,C_i,C_i^\top$ are pairwise non H-equivalent. For each $A_i$ let $\{A_i^j\}$ be the classification list of all the anti-symmetric members of $[A_i]$ up to SH-equivalence. For each $A_i$ choose nonnegative $a_i,b_i$ such that $a_i+2b_i=m_i$, and choose a multiset $\Lambda_i:=\{A_i^{j_1},\ldots,A_i^{j_{a_i}}\}$. Let $\Lambda=(\Lambda_i)$. Then the matrices
        \be \Sigma_\Lambda\ := \bigoplus_{i,\Lambda_i} A_i^{j_k} \oplus \bigoplus \begin{bmatrix}
            0 & A_i\\
            -A_i & 0
        \end{bmatrix}^{\oplus b_i}\oplus \begin{bmatrix}
            0 & B_i\\
            -B_i^\top & 0
        \end{bmatrix}^{\oplus n_i} \oplus \begin{bmatrix}
            0 & C_i\\
            -C_i^\top & 0
        \end{bmatrix}^{\oplus o_i}
        \ee
        form a classification list of all the anti-symmetric members of $[A]$ up to SH-equivalence. Let $\{X_i\},\{A_{Y_i}\},\{A_{Z_i}\}$ be the lists of distinct $1\times 1$ or $2\times 2$ diagonal blocks of $\Sigma_\Lambda$ of types I,II,III with multiplicities $p_i,q_i,r_i$ respectively. Then 
        \be 
            \SAut(\Sigma_\Lambda)\ \cong \ \prod_i \SAut(X_i)\wr S_{p_i} \times \prod_i \TAut(Y_i)\wr S_{q_i}\times \prod_i \Aut(X_i)\wr S_{r_i}.
        \ee
    \end{theorem}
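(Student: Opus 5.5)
The plan is to follow the proof of Theorem \ref{thm:symm} line by line, keeping track of the signs. The first step is an anti-symmetric analogue of Proposition \ref{prop:symm}. If $A^\top=LAR^\top$, then $MA$ is anti-symmetric exactly when $(-M^\top L,MR)\in\Aut(A)$. This part of Proposition \ref{prop:symm} is already stated, so it only has to be checked by the same computation. From it, any anti-symmetric member of $[A]$ is SH-equivalent to some $MA$. Take $A$ in block form with $A_i$ anti-symmetric. Theorem \ref{thm:wreath} then forces $M$ to preserve the index sets of the groups $A_i^{\oplus m_i}$, $B_i^{\oplus 2n_i}$ and $(C_i\oplus C_i^\top)^{\oplus o_i}$. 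As in Cases I--III, $|M|$ is an involution on blocks.

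The fixed blocks give $M_jA_i$ with $(-M_j^\top,M_j)\in\Aut(A_i)$; these are the type I anti-symmetric blocks. The swapped pairs give $\begin{bmatrix}0&P_iA_0\\Q_iA_0^\top&0\end{bmatrix}$. Anti-symmetry holds iff $Q_iA_0^\top=-(P_iA_0)^\top$, and this is the condition on $(P_i^\top,Q_i)$ with a sign. The symmetric monomial conjugation $P_i\oplus I$ then turns the pair into $A_{X}$ with $X=A_0$. For Case I the relevant block is $\begin{bmatrix}0&A_i\\-A_i&0\end{bmatrix}$; this uses $A_i^\top=-A_i$, so that $A_{A_i}$ has this form. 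For Case II, $n$ is again forced to be even, since a fixed block would give an anti-symmetric member of $[B_i]$. This yields the anti-symmetric analogue of Proposition \ref{prop:class_sym}.

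Next I prove the analogue of Lemma \ref{lem:SX}: for primitive $X,X'$, $A_X\sim_{SH}A_{X'}$ iff $X\sim_H X'$ or $X^\top\sim_H X'$. One direction uses $(L\oplus R,L\oplus R)$. The other uses $(S_I,S_I)$ composed with a sign, namely $(I\oplus -I)S_I$; this gives $S_IA_XS_I^\top=A_{-X^\top}$, and $-X^\top\sim_H X^\top$. The converse again comes from the primitive decomposition of $X\oplus X^\top$, since $A_X\sim_H X\oplus X^\top$ after a sign change. Then I prove the analogue of Proposition \ref{prop:symautSx}: $\SAut(A_X)\cong\TAut(X)$ or $\Aut(X)$, according to whether $[X]$ is symmetric or not. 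For this I use the same $\varphi$, except that $\varphi(\top)$ is replaced by the sign-twisted swap $(I\oplus -I)S_I$.

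Finally, I copy the connected-component argument of Theorem \ref{thm:symm}. The graph $\mathcal{SG}$ is built from the support (or from $|A|$), and its components are the type I, II and III blocks. Any SH-isomorphism therefore maps blocks to identical blocks. This gives both the classification-list statement and the wreath-product decomposition. The main obstacle I expect is the sign bookkeeping in the $\SAut(A_X)$ computation. I must check that $\varphi$ with the twisted swap is still a homomorphism on $\TAut(X)$. This can fail only by a central sign, and I would try to absorb that sign into the choice of $\xi$. I must also check that $\xi$, built from $LXR^\top=X^\top$ with an inserted $-1$, is genuinely a symmetric automorphism of $A_X$.
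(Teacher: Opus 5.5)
Your strategy is the one the paper uses: its proof is only the remark that the symmetric argument goes through ``except for a change of sign here and there.'' Several of your steps are correct: the anti-symmetric form of Proposition \ref{prop:symm}, the reduction to Cases I--III, your analogue of Lemma \ref{lem:SX}, and the component argument on the support graph. However, two of the sign checks you defer actually fail, and in both places the statement as printed is wrong.

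The first is the doubled type I block. For anti-symmetric $A_i$ we have $-A_i^\top=A_i$, so $A_{A_i}=\begin{bmatrix}0&A_i\\A_i&0\end{bmatrix}$, not $\begin{bmatrix}0&A_i\\-A_i&0\end{bmatrix}$ as you assert. The latter matrix is symmetric: its transpose $\begin{bmatrix}0&-A_i^\top\\A_i^\top&0\end{bmatrix}$ equals itself. So it is not SH-equivalent to any anti-symmetric matrix, and $\Sigma_\Lambda$ as printed is not anti-symmetric whenever some $b_i>0$. Your own swapped-pair computation gives $A_{A_i}$, which is $\begin{bmatrix}0&A_i\\A_i&0\end{bmatrix}$; that is the block the classification list must contain.

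The second problem is the step you flagged, and it cannot be repaired. The analogue of Proposition \ref{prop:symautSx} is false as a statement about groups, whatever $\xi$ you choose. Take $X=[5]$, which is primitive, lies in a symmetric class, and has no anti-symmetric member. Then $A=[5]^{\oplus 2}$ and $\Sigma_\Lambda=A_X=\begin{bmatrix}0&5\\-5&0\end{bmatrix}$.
\begin{itemize}
\item A diagonal $\diag(a,b)$ fixes $A_X$ iff $ab=1$.
\item A swap $\begin{bmatrix}0&u\\v&0\end{bmatrix}$ fixes it iff $uv=-1$.
\item Hence $\SAut(A_X)=\{\pm I,\pm\begin{bmatrix}0&1\\-1&0\end{bmatrix}\}\cong\ZZ/4$.
\item By contrast, $\TAut([5])=\{(\pm1,\pm1),(\pm1,\pm1)\top\}$ is a Klein four-group.
\end{itemize}
The general reason is as follows. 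A block anti-diagonal element $\begin{bmatrix}0&U\\V&0\end{bmatrix}\in\SAut(A_X)$ squares to $UV\oplus VU$. It is an involution iff $V=U^\top$ and $U^\top X$ is anti-symmetric. So $1\to\Aut(X)\to\SAut(A_X)\to\ZZ/2\to1$ splits iff $[X]$ has an \emph{anti-symmetric} member, which never happens for the $B_i$. Meanwhile $\TAut(X)$ splits iff $[X]$ has a symmetric member (Proposition \ref{prop:split}). The central sign $((I\oplus -I)S_I)^2=-I$ that you hoped to absorb is exactly this extension class.

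What does hold is $|\SAut(A_X)|=|\TAut(X)|$. This comes from the set bijection $(L,R)\mapsto L\oplus R$, $(L,R)\top\mapsto\begin{bmatrix}0&-L\\R&0\end{bmatrix}$, which is not a homomorphism. Consequently the group isomorphism in the theorem is correct only after replacing $\TAut(Y_i)$ by $\SAut(A_{Y_i})$. With $\TAut(Y_i)$ it holds only at the level of orders, which is all that Proposition \ref{prop:count_sym_IW} actually uses.
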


\subsection{Case study: Symmetric classification of projective space incidence and weighing matrices.} \label{sec:proj} In this subsection we show the consequences of the symmetric classification theorems: Proposition \ref{prop:symm} and Lemma \ref{lem:symm} to the finite projective space incidence matrix and weighing matrix. Let $\FF=\FF_q$ be the finite field of $q=p^r$ elements, $p$ an odd prime number. The projective space $\mathbb P^d(\FF)$ is the set of all lines through the origin in the space $V:=\FF^{d+1}$. A hyperplane $H\subset \mathbb P^d(\FF)$ is a set defined by a single nonzero homogenous linear equation on $V$. The \emph{projective space incidence matrix} $PI=PI_{d,q}$ is a $\{0,1\}$-matrix indexed by pairs of a points and an hyperplane, and $(PI)_{a,H}=1$ iff $a\in H$.\\ 

The projective space incidence matrix can be given concretely in terms of a nondegenerate bilinear form $[,]:V\times V\to \FF$ as follows: For each point $a$ and each hyperplane $H$, choose representing vectors $v=v(a),w=w(H)$ respectively, such that the hyperplane $H$ is given by the equation $[x,w]=0$. The incidence matrix is given by $(PI)_{a,H}=1$ iff $[v,w]=0$. The \emph{projective space weighing matrix}, denoted $PW=PW_{d,q}$ is given by $(PW)_{a,H}=0$ iff $[x,w]=0$, $(PW)_{a,H}=1$ iff $[v,w]\in (\FF^\times)^2$ is a quadratic residue, and $(PW)_{a,H}=-1$ iff $[v,w]$ is a quadratic nonresidue. It is easy to see that choosing another nondegenerate bilinear, different representatives, and changing the order of the points and hyperplanes, will altogether result in an H-equivalent matrices, for both $PW$ and $PI$. Therefore both matrices are uniquely defined up to H-equivalence. The weighing matrix $PW$ is a well known object (see e.g. \cite{JUNGNICKEL1999294,GD2023}).\\

Invertible linear maps $A:V\to V$ are well defined on the projective space, and induce permutations on the sets of points and hyperplanes. Let $\phi:V\to V$ be the $p$-power Frobenius map. A \emph{semilinear map} $S:V\to V$ is a composition $S=A\circ \phi^i$, where $A$ is linear. The set of semilinear maps is a group under composition, denoted by $G\Gamma L(V)$. The quotient $PG\Gamma L(V):=G\Gamma L(V)/scalars$ acts by permutations on the sets of points and hyperplanes in projective space, also preserves the incidence relation. We have $\Gamma L (V)=GL(V)\rtimes Gal(\FF/\FF_p)$ and $P\Gamma L (V)=PGL(V)\rtimes Gal(\FF/\FF_p)$. For a scalar or matrix $A$ and a Galois automorphism $\sigma$, we shall denote $A^\sigma$ the entrywise $\sigma$ action on $A$. We identify a linear map $A$ with its matrix with respect to the standard basis of $V$. Then for $A\in GL(V)$, $\phi^iA\phi^{-i}=A^{\phi^i}$. A semilinear map $S$ defines an automorphism of $PI$, via the pair of permutations $(\pi_L(S),\pi_R(S))$ on points and hyperplanes. The \emph{fundamental theorem of projective geometry} (see \cite[chap. 2]{Artin1988}) says that when $d\ge 2$, $\PermAut(PI)=PG\Gamma L(V)$.  A similar result for $PW$ we states:
\begin{proposition}\label{prop:PWAut}
    For $d\ge 2$,
    \be \label{eq:isomPW} \Aut(PW)\ = \ PG\Gamma L(V)^+:=G\Gamma L(V)/(\FF^\times)^2.\ee
\end{proposition}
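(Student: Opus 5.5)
The plan is to build an explicit homomorphism $\Psi:G\Gamma L(V)\to \Aut(PW)$, show that its kernel is exactly the group of square scalars $(\FF^\times)^2$, and show that it is surjective. Surjectivity will come from the fundamental theorem of projective geometry together with a connectivity argument that controls the signs.

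\emph{Construction of $\Psi$.} Take $[x,y]=x^\top y$ and fix representatives $v(a)$, $w(H)$. Let $S=A\circ\phi^i$ be semilinear, and let $S^\ast:=(A^\top)^{-1}\circ\phi^i$ be its contragredient. Then
$$[Sx,S^\ast y]=[x,y]^{\phi^i}$$
for all $x,y$. For each point $a$ write $Sv(a)=c_a\,v(Sa)$, and for each hyperplane $H$ write $S^\ast w(H)=c'_H\,w(SH)$, with $c_a,c'_H\in\FF^\times$. Let $\chi$ be the quadratic character, extended by $\chi(0)=0$, so that $PW_{a,H}=\chi([v(a),w(H)])$. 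Since $q$ is odd, $\phi$ permutes $\FF^\times$ and preserves squares, so $\chi(t^{\phi^i})=\chi(t)$. This gives
$$PW_{Sa,SH}=\chi(c_a)\chi(c'_H)\,PW_{a,H}.$$
Define $L$ as the signed permutation matrix sending $a\mapsto Sa$ with sign $\chi(c_a)$, and $R$ as the signed permutation matrix sending $H\mapsto SH$ with sign $\chi(c'_H)$. Then $(L,R)\in\Aut(PW)$. One checks routinely that $S\mapsto (L,R)$ is multiplicative, since the scalars $c_a$ compose as cocycles.

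\emph{Kernel.} Suppose $S$ maps to $(I,I)$. Then $S$ fixes every point, so on $\mathbb P^d$ it acts as the identity. For $d\ge2$ this forces $S=\lambda\cdot\mathrm{id}$: a nontrivial Frobenius twist moves the point $[1:t:0:\cdots]$ whenever $t^{\phi^i}\ne t$. For $S=\lambda$ we have $c_a=\lambda$ and $c'_H=\lambda^{-1}$, so the image of $S$ is $(\chi(\lambda)I,\chi(\lambda)I)$. This is trivial exactly when $\lambda\in(\FF^\times)^2$. Note also that a nonsquare $\lambda$ gives $(-I,-I)$, which is a genuine nontrivial automorphism; this is why the quotient is by squares rather than by all scalars.

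\emph{Surjectivity.} Let $(L,R)\in\Aut(PW)$. Then $(|L|,|R|)$ preserves $\chi(PW)$, which is the complement $J-PI$. Hence it preserves $PI$, and by the fundamental theorem of projective geometry ($d\ge2$) it equals $(\pi_L(S),\pi_R(S))$ for some $S\in G\Gamma L(V)$. Multiplying $(L,R)$ by $\Psi(S)^{-1}$ leaves a pair $(D_1,D_2)$ of diagonal sign matrices with $D_1\,PW\,D_2=PW$. This means $d_1(a)d_2(H)=1$ for every non-incident pair $(a,H)$. The non-incidence bipartite graph is connected: any two points avoid a common hyperplane, and any two hyperplanes miss a common point. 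Therefore $D_1=D_2=\pm I$, and $(-I,-I)=\Psi(\lambda)$ for a nonsquare $\lambda$. So $(L,R)$ lies in the image of $\Psi$.

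The step I expect to be the main obstacle is the sign bookkeeping in the construction. One must verify that the sign assignments $\chi(c_a)$ and $\chi(c'_H)$ really make $\Psi$ a homomorphism, even though changing the representatives $v(a)$ and $w(H)$ alters the signs by a coboundary. One must also check that the Frobenius factor does not interfere, which it does not because squares are Galois-stable.
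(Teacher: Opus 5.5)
Your proposal is correct and follows essentially the same route as the paper. Both arguments build $G\Gamma L(V)\to\Aut(PW)$ with sign corrections, get surjectivity onto $\PermAut(PI)$ from the fundamental theorem of projective geometry, show the leftover diagonal-sign part is $\{\pm(I,I)\}$ and is hit by a nonsquare scalar, and identify the kernel as the square scalars. Your connectivity argument for the non-incidence graph is exactly what the paper compresses into ``since $PW$ is primitive,'' and your contragredient $S^\ast$ makes the paper's unspecified ``sign corrections'' explicit.
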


\begin{proof}
    We have a map $G\Gamma L(V)\to \Aut(PW)$ mapping $S$ to the pair of monomials $(m_L(S),m_R(S))$ with underlying permutations $(\pi_L(S),\pi_R(S))$, and sign corrections that occur due to the fact that $S$ need not map representatives to representatives. The composition of maps $G\Gamma L(V)\to \Aut(PW)\stackrel{|\cdot|}{\to}\PermAut(PI)$ factors through the isomorphism $P\Gamma L(V)\cong \PermAut(PI)$. Hence $\Aut(PW)\to \PermAut(PI)$ is surjective, and its kernel is $\langle (-I,-I)\rangle$, since $PW$ is primitive. Notice that $(-I,-I)$ is the image of $\lambda I\in G\Gamma L(V)$ for a quadratic nonresidue scalar $\lambda$. This shows that $G\Gamma L(V)\to \Aut(PW)$ is surjective, and that its kernel is a subgroup of $\FF^\times$. Clearly this kernel is $(\FF^\times)^2$, which proves the proposition.
\end{proof}

Notice that $m_L,m_R$ are group homomorphisms $G\Gamma L(V)\to \Mon(d+1)$. From now on we shall work with the standard inner product $[-,-]$. Under this setting $PI$ and $PW$ are symmetric matrices. For each semilinear map $S=A\circ \phi^i$, $A$ linear, we define $S^\dagger :=\phi^{-i}\circ A^\top$, and it is clear that $[v,Sw]=[S^\dagger v,w]^{\phi^{i}}$. Let us define $\{v,w\}=[v,w]\mod (\FF^\times)^2$ and $\langle v,w\rangle=[v,w]\mod \FF^\times$. These are being used to define $PW$ and $PI$ respectively. For any semilinear map $S$, $\{v,Sw\}=\{S^\dagger v,w\}$ and $\langle v,Sw\rangle=\langle S^\dagger v,w\rangle$. The fundamental theorem of projective geometry takes now the following concrete form. Any $S\in G\Gamma L(V)$ defines an automorphism of $PI$ by the pair $(\pi_R((S^{-1})^\dagger,\pi_R(S)))$. A similar result holds for $PW$ with $m_R$. We now state and prove the symmetric classification theorem for finite field projective spaces.

\begin{theorem} Let $d\ge 2$, $q=p^r$, $p$ an odd prime. The number of SH-equivalence classes of symmetric matrices in $[PW]$ is $$S(PW)=2+M_{d,q}+N_{r},$$ where $M_{d,q}=1$ if $d$ is odd and $q\equiv 1\mod 4$, and $M_{d,q}=0$ otherwise. And where $N_r=2$ if $r$ is even and $N_r=0$ otherwise.\\

The number of SH-equivalence classes of symmetric matrices in $[PI]$ is $$S(PI)=2+M_d+\tfrac{1}{2}N_r,$$ where $M_d=0$ if $d$ is even and $M_d=1$ if $d$ is odd.
\end{theorem}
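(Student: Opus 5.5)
We would reduce the count to a non-abelian cohomology computation via Proposition \ref{prop:hom} and Corollary \ref{cor:hom}(a). Since $PW$ and $PI$ are symmetric with respect to the standard form, the class is symmetric and the number of SH-classes equals $|H^1(\ZZ/2;\Aut(X))|$ for $X=PW$ or $PI$. Equivalently, by Lemma \ref{lem:symm}, we count monomials $M$ with $(M^\top,M)\in\Aut(X)$, modulo the relation $M\sim Q^\top M P$ for $(P,Q)\in\Aut(X)$.

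\textbf{Step 1: the automorphism groups.} For $PW$ we take Proposition \ref{prop:PWAut}, $\Aut(PW)=G\Gamma L(V)/(\FF^\times)^2$. For $PI$ we take $\Aut(PI)=P\Gamma L(V)$, up to the sign pair $(-I,-I)$. Since $PI$ is a nonnegative primitive matrix, a sign automorphism $(D,D')$ forces $D,D'$ to be $\pm$ the same constant on the connected graph $\mathcal B_{PI}$. We will have to track this extra $\pm$ carefully.

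\textbf{Step 2: translate into semilinear algebra.} The automorphism attached to $S$ is $(m_R((S^{-1})^\dagger),m_R(S))$. So the transposition involution on $\TAut$ acts on the group by $\tau(S)=(S^\dagger)^{-1}$.
\begin{itemize}
\item A cocycle is an $S$ with $S^\dagger=cS$ for a scalar $c$, taken modulo squares for $PW$ and modulo all scalars for $PI$.
\item Two cocycles $S,S'$ are cohomologous iff $S'=T S T^\dagger$ up to the same scalar group, i.e.\ congruence of semilinear forms.
\item Writing $S=A\phi^i$ gives $S^\dagger=\phi^{-i}A^\top$. The condition $S^\dagger=cS$ forces $\phi^{2i}=1$, so $i=0$, or $i=r/2$ when $r$ is even.
\end{itemize}

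\textbf{Step 3: classify the cocycles case by case.}
\begin{itemize}
\item Linear case, $i=0$. From $A^\top=cA$ and $A=c^2A$ we get $c=\pm1$.
  \begin{itemize}
  \item $c=1$: nondegenerate symmetric forms. Over $\FF_q$ these have exactly two congruence classes, distinguished by the discriminant. Scaling by squares keeps them apart, which gives the "2" for $PW$. For $PI$ we must redo this modulo all scalars; a nonsquare scalar changes the discriminant by $\lambda^{d+1}$, so the answer depends on the parity of $d$.
  \item $c=-1$: alternating forms, which exist only for $d+1$ even and are unique up to congruence. For $PW$ they are cocycles only if $-1$ is a square mod squares, i.e.\ $q\equiv1 \bmod 4$; this gives $M_{d,q}$. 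For $PI$ the scalar $-1$ is always absorbed, which gives $M_d$.
  \end{itemize}
\item Semilinear case, $i=r/2$. Here $A^\top=cA^{\sigma}$ with $\sigma$ the involution of $\FF_q/\FF_{\sqrt q}$: (twisted) Hermitian forms. Hermitian forms over finite fields are unique up to congruence. We then count the admissible $c$ modulo squares, respectively modulo all scalars, using Hilbert 90 for the norm map $N_{\FF_q/\FF_{\sqrt q}}$. We expect this to give $N_r=2$ classes for $PW$ and $1$ for $PI$.
\end{itemize}

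\textbf{Main obstacle.} The hardest step is the bookkeeping of scalars: which $c$ survive, and how the quotients by $(\FF^\times)^2$ versus $\FF^\times$, together with the $\pm$ sign ambiguity in $\Aut(PI)$, merge or split the congruence classes. We would settle it by computing directly the induced map on $H^1$ from the exact sequence $1\to\FF^\times/(\FF^\times)^2\to G\Gamma L/(\FF^\times)^2\to P\Gamma L\to1$.

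We would then check the totals against small cases computed with Algorithm \ref{alg:symm}, for example $d=2$ with $q=3,5,9$. The parity-of-$d$ dependence in the symmetric-form count for $PI$ is exactly where we expect to have to reconcile the count with the stated constant $2$.
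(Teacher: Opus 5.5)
\textbf{Where your proposal agrees with the paper.} For $PW$ your plan is the paper's argument in different notation. The paper also reduces, via Proposition~\ref{prop:symm} and Lemma~\ref{lem:symm} (equivalently your $H^1$ count), to semilinear $S$ with $S^\dagger\equiv S$ modulo $(\FF^\times)^2$, taken up to $S\mapsto LSL^\dagger$. It then splits into the euclidean, symplectic and hermitian cases exactly as you do.

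One caution in the hermitian case. Every norm-one $c$ (for $\FF/\FF_{p^{r/2}}$) is already a square in $\FF$, so counting ``admissible $c$ modulo squares'' gives only one class. The invariant that separates the two classes is the square class of $\alpha$ in the normal form $S\equiv\alpha\tau$, equivalently $c$ modulo squares of the norm-one group. The paper proves this using $\FF_{p^{r/2}}^\times\subset(\FF^\times)^2$.

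\textbf{The gap: the $PI$ half.} There you defer to a hoped-for ``reconciliation'' with the stated constant. That reconciliation cannot happen, because the two effects you flagged compound rather than cancel.

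Since $PI$ is nonnegative and $\mathcal B_{PI}$ is connected, $\Aut(PI)=P\Gamma L(V)\times\langle(-I,-I)\rangle$. The involution $(L,R)\mapsto(R,L)$ induced by transposition fixes $(-I,-I)$, so the sign factor contributes an extra factor $|H^1(\ZZ/2;\ZZ/2)|=2$. Concretely:
\begin{enumerate}
\item Every symmetric member of $[PI]$ is SH-equivalent to $\pm P\cdot PI$ with $P$ a permutation.
\item Any map $Y\mapsto TYT^\top$ preserves the multiset of diagonal entries.
\item $P\cdot PI$ has a $1$ on the diagonal at each absolute point of the associated polarity. Such points exist for $d\ge 2$, $q$ odd: a nondegenerate reflexive form in $d+1\ge 3$ variables has isotropic vectors, and in the symplectic case every point is absolute.
\end{enumerate}
Hence $X\not\sim_{SH}-X$, so the number of classes is even. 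It equals twice the number of $P\Gamma L(V)$-classes of polarities, namely
$2(1+2M_d+\tfrac12N_r)=2+4M_d+N_r$.

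The sign does turn your count of $1$ for $d$ even, $r$ odd into the stated $2$, but it doubles every other case too. For $d=3$, $q=3$ the displayed formula gives the odd number $3$, whereas the true count is $6$.

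The paper's own proof of this part works with $ISemSym$, built from $P\Gamma L(V)$ alone, so it drops the sign factor. Its case analysis (3 classes for $d$ odd, 1 for $d$ even, plus 1 for $r$ even) gives $1+2M_d+\tfrac12N_r$, which does not match the displayed formula for $d$ even either.

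So your method proves the $PW$ formula, but no completion of it can prove the $PI$ formula as stated. Under the paper's definition of SH-equivalence, the correct value is $2+4M_d+N_r$.
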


\begin{proof}
    We use Proposition \ref{prop:symm} and Lemma \ref{lem:symm} applied to $PW$. Any symmetric member in $PW$ is SH-equivalent to $M\cdot PW$ for some monomial $M$ such that $(M^{-1},M)$ is an automorphism. There exists an $S\in P\Gamma L(V)$ such that $M=m_R(S)$, and $M^{-1}=m_L(S)=m_R((S^{-1})^\dagger)\implies M=m_R(S^\dagger)$. By the isomorphism $\eqref{eq:isomPW}$ it follows that $S^\dagger = S \mod (\FF^\times)^2$. The matrix $M\cdot PW$ is given by the symmetric form $\{u,Sw\}$. In addition $M_1\cdot PW$ and $M_2\cdot PW$ are SH-equivalent, if and only if for some monomial $T$ we have that $(M_2^{-1} TM_1,T)\in \Aut(PW)$. It follows that $T=m_R(L)$ for a semilinear $L$, writing $M_i=m_R(S_i)$ and substituting in $M_2^{-1}TM_1=m_R((T^{-1})^\dagger)$, we obtain the equality $LS_1L^\dagger=S_2\mod (\FF^\times)^2$. Rolling back the equations, any semilinear map $S$ satisfying $S^\dagger =S \mod (\FF^\times)^2$ defines symmetric matrix $M\cdot PW$ given by the symmetric form $\{v,Sw\}$ for $M=m_R(S)$ and $LS_1L^\dagger=S_2\mod (\FF^\times)^2$ ensures that we get SH-equivalent matrices. Thus we have shown that the symmetric classification of $[PW]$ is in bijection with the set $$SemSym\ := \{S\in P\Gamma L(V)\ | \ S^\dagger =\lambda S, \ \lambda\in (\FF^\times)^2\}/\equiv,$$ where $S_1\equiv S_2$ iff there is an $L\in P\Gamma L(V)$ such that $LS_1L^\dagger=S_2\mod (\FF^\times)^2$.\\
    
    Let us now analyze the set $SemSym$. Writing $S=A\phi^i$ for $A$ linear, the condition $S^\dagger=S\mod (\FF^\times)^2$ becomes $\phi^{-i}A^\top=\lambda A\phi^i$ for some $\lambda\in (\FF^\times)^2$ which is equivalent to  $\phi^{-i} A^\top\phi^{-i} =\lambda A$, where necessarily $\phi^{-2i}=id$, because the left hand side should be linear. Let $\tau$ be the unique Galois automorphism of order $2$ (if exists). Then $\phi^i=id$ or $\tau$. Write $\bar A=\phi^{-i} A\phi^{i}$ for a matrix or a scalar. The conditions on $A$ become $\bar A^\top=\lambda A$ and $\lambda\bar\lambda=1$. We now separate the discussion to three cases. \\

     {\bf\large Case I (The euclidean case):} $r$ is odd, and $d$ is even or $q\equiv 3\mod 4$: Since $r$ is odd, $\phi^i=id$, $\bar A=A$, and $\lambda^2=1$, $\lambda$ a square. If $d$ is even we cannot have $\lambda=-1$, or else $\det(A)=0$, since the dimension of $A$ is odd. If $q\equiv 3\mod 4$ then $-1$ is not a square, and again $\lambda\neq -1$. Thus we must have $\lambda=1$ and $A$ is symmetric. By the well-known classification of symmetric bilinear forms over finite fields, up to congruence there are only two cases, $A= B^\top I B$ or $A=B^\top \diag(\rho,1,\ldots,1)B$ for a nonsquare scalar $\rho\in \FF^\times$. Hence we may assume that $S=S_1:=A=I$ or $S=S_2:=A=\diag(\rho,1,\ldots,1)$. We will show that $S_2\not\equiv I=S_1$. Otherwise $S_2=L^\dagger L$, and writing $L=B\phi^i$ for $B$ linear, $S_2=\phi^{-j}B^\top B\phi^j=(\phi^{-j}B\phi^{j})^\top((\phi^{-j}B\phi^{j}))$. Taking the determinant we see that $\det S_2$ is a square, which is a contradiction. We conclude that in this case $SemSym$ is a set of two elements.\\

    {\bf\large Case II (The symplectic case):} $r$ is odd, $d$ is odd, and $q\equiv 1\mod 4$: In this case, in addition to the case that $A$ is symmetric, we may also have the case $A^\top=-A$. In this event, the classification of symplectic forms over fields implies that for some $B\in GL(V)$,$$A=B^\top J B, \ \ \ J=\begin{bmatrix}
        0 & -1\\ 1 & 0
    \end{bmatrix}^{\oplus (d+1)/2}.$$ Hence in addition we have $S=S_3:=J$, and $S_3$ is not equivalent in $SemSym$ to $S_1,S_2$, since $L^\dagger S_iL$, $i=1,2$ are symmetric. In this case $SemSym$ is a set of $3$ elements.\\

    {\bf\large Case III (The hermitian case):} $r$ is even: If $\phi^i=id$, then we are in a situation which is similar to cases I or II, and as before, that event contributes to SemSym either $2$ or $3$ elements, according to whether $d$ is odd and $q\equiv 1\mod 4$, or not. But now $\phi^i=\tau$ contributes new elements to $SemSym$, where $S=A\tau$. Such $S$ is not equivalent to $S_1,S_2,S_3$, since is not linear over $\FF$. We have that $\bar A^\top=\lambda A$ and $\lambda\bar\lambda=1$. We claim that there exists a $\nu\in \FF$ such that $\nu/\bar \nu=\lambda$. This is a consequence of the Hilbert 90 theorem, but let us show this concretely. If $\lambda=-1$, we let $\nu\in \FF$ be a square root of a non-square element of $\FF_{p^{r/2}}$. Otherwise we set $\nu=1+\lambda$, and compute $\nu/\bar\nu=(1+\lambda)/(1+\bar\lambda)=\lambda(1+\lambda)/\lambda(1+\bar\lambda)=\lambda$. Letting $A_0=\nu A$, $\bar A_0^\top=A_0$ is hermitian and $S=\nu^{-1}A_0\tau$. By the classification theorem for hermitian forms over finite fields, we have $A_0=\bar U^\top U$ for some $U\in GL(V)$. Then it follows that $S=(U\tau)^\dagger \bar\nu^{-1}\tau (U\tau)$, and in particular $S\equiv \alpha \tau$ for a scalar $\alpha$. Each form $\{u,\alpha\tau v\}$ is symmetric (note that $\bar\alpha/\alpha$ is always a square.) To finish the analysis we need to study when is $\alpha \tau\equiv \beta \tau$. Suppose that $\alpha$ and $\beta$ satisfy that for some $L$ and a square $\mu^2$, $L^\dagger \alpha \tau L=\beta \mu^2\tau$. Writing $L=B\phi^j$ for linear $B$ this is equivalent to $\alpha B^\top \bar B =\beta^{\phi^j}\nu^2I$ for some linear $B$ and a scalar $\nu$, which is equivalent to $\alpha/(\nu^2\beta^{\phi^j})I$ is hermitian. This in turn is equivalent to $\alpha/\beta^{\phi^j}\in (\FF^\times)^2\FF_{p^{r/2}}^\times$. Since $\FF_{p^{r/2}}^\times\subset (\FF^\times)^2$ and $\beta^\psi=\beta \mod (\FF^\times)^2$, then $\alpha \tau\equiv \beta\tau$ iff $\alpha=\beta \mod (\FF^\times)^2$. We conclude that in this case there are 2 additional SH-equivalence classes. This completes the proof for $[PW]$.\\

    The proof of $[PI]$ is similar, and we focus on the differences. We replace the form $\{u,v\}$ with the form $\langle u,v \rangle$. 
    The classification of the SH-equivalence classes is in bijection with the set 
    $$ISemSym\ := \ \{S\in P\Gamma L(V)\ | \ S^\dagger =\lambda S, \ \lambda\in \FF^\times\}/\approx,$$ where $S_1\approx S_2$ if and only if there exist a semilinear map $L$ such that $L^\dagger S_1 L=S_2 \mod \FF^\times$. If $S^\dagger=\lambda S$, then again $S=A$ or $S=A\tau$ for linear $A$ satisfying $\bar A^\top=\lambda A$ for $\lambda\bar\lambda=1$. There are three cases to consider. In Case A, $r$ is odd and $d$ is odd. Then $S=A$, and $A^\top=\pm A$. If $A^\top=A$, there are two $\approx$ classes determined by whether $\det A$ is a square or not. Notice that $\det(\lambda I)=\lambda^{d+1}$ is a square. There is another class coming from the case $A^\top=-A$. Case B is when $r$ is odd and $d$ is even. Then for $A^\top=A$ there is only one $\approx$ class, since $\det(\lambda I)$ could be a nonsquare. Finally case C when $r$ is even, adds one additional class coming from $S=A\tau$ for $\bar A^\top=\lambda A$. Like before we may reduce to $A=\alpha\tau$, but now there is only one $\approx$ class, since $\alpha\tau\approx \beta\tau$ for all $\alpha$ and $\beta$, as we do not care about squares.
\end{proof}

\begin{remark}
    The matrix $-PW$ results from the form $\{v,\lambda w\}$ for quadratic nonresidue $\lambda$.
    When $d$ is even it is not symmetrically equivalent to $PW$. However, when $d$ is odd $-PW\sim_{SH} PW$.
\end{remark}

\section{Counting IW matrices}\label{sec:6}
In this section we explain how to count the number of all $IW(n,k), SIW(n,k),$ $AIW(n,k)$, and similarly for weighing matrices, using the classification and the automorphism groups. We can do the counting class by class. Simply, if we want to count the size of a class $[A]$, we can use the simple formula

\be \label{eq:IWcountClass}  \big|[A]\big|\ = \ \frac{2^{2n}n!^2}{|\Aut(A)|}.\ee
This is the orbit-stabilizer formula, acknowledging that $[A]$ is the orbit of $A$ under $Mon(n)\times Mon(n)$. The total number of $IW(n,k)$ can de derived from the classification, yielding 

\be \label{eq:IWcount}
    \big| IW(n,k)\big|\ = 2^{2n}n!^2\sum_{A\in Cl(n,k)} \frac{1}{|Aut(A)|},
\ee
where $Cl(n,k)$ is a classification list of $IW(n,k)$ modulo H-equivalence. To compute this, we need to generate $Cl(n,k)$, which may be large, due to the large number of non-primitive classes. We can reduce the amount of work substantially if we base our analysis on the \emph{primitive} classification lists.\\

We define the \emph{characteristic} counting function of weight $k$ as the formal power series

$$Z_k(t) \ := \ 1+\sum_{n=1}^\infty \frac{\big| IW(n,k) \big|}{2^{2n}n!^2}t^n.$$
Let $PCl(n,k)$ be a classification list of the primitive $IW(n,k)$. Let 
$$ PZ_k(t) \ := \ \sum_{n=1}^\infty \sum_{A\in PCl(n,k)} \frac{t^n}{|\Aut(A)|},$$
which is the primitive analog of $Z_k(t)$. The next proposition bridges the gap between primitive and general counting.

\begin{proposition}\label{prop:countIW}
    As formal power series,
    \be\label{eq:ZetaIW} Z_k(t) \ = \ \exp\big(PZ_k(t)\big).\ee
\end{proposition}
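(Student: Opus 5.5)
We will prove the identity coefficientwise, using the H-class count \eqref{eq:IWcountClass}, the Primitive Decomposition Theorem \ref{thm:prim}, and the automorphism formula of Theorem \ref{thm:wreath}.

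\textbf{Step 1: rewrite the left side as a sum over classes.} By \eqref{eq:IWcount}, the coefficient of $t^n$ in $Z_k(t)$ equals $\sum_{A\in Cl(n,k)}1/|\Aut(A)|$, with the constant term $1$ standing for the empty matrix, $n=0$.

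\textbf{Step 2: parametrize the classes.} By Theorem \ref{thm:prim} and its corollary, H-classes in $IW(n,k)$ are in bijection with multisets of primitive classes whose sizes add up to $n$. Index the primitive classes by $P$, with sizes $n_P$ and groups $G_P=\Aut(P)$. A class then corresponds to a function $r:\{P\}\to\ZZ_{\ge0}$ with finite support and $\sum_P r_P n_P=n$. For the class $A\sim_H\bigoplus_P P^{\oplus r_P}$, Theorem \ref{thm:wreath} gives
\[
|\Aut(A)|=\prod_P |G_P|^{r_P}\,r_P!.
\]

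\textbf{Step 3: compare the two sides.} Summing over all $n$,
\[
Z_k(t)=\sum_{r}\prod_P \frac{1}{r_P!}\left(\frac{t^{n_P}}{|G_P|}\right)^{r_P}=\prod_P \exp\!\left(\frac{t^{n_P}}{|G_P|}\right)=\exp\Big(\sum_P \frac{t^{n_P}}{|G_P|}\Big)=\exp(PZ_k(t)).
\]
The factorization into a product over $P$ is the standard exponential-formula manipulation.

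\textbf{Main obstacle: legitimacy as formal power series.} The infinite product over primitive classes must make sense. There are finitely many primitive classes of each size, since $IW(n,k)$ is finite. Each $n_P\ge1$, so each factor is $1+O(t^{n_P})$, and only finitely many factors affect any given coefficient. The product therefore converges $t$-adically, and the rearrangement in Step 3 is valid because each coefficient is a finite sum.

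One more point needs checking: Theorem \ref{thm:wreath} requires the $A_i$ to be pairwise non-equivalent. Grouping the decomposition by distinct primitive classes $P$ arranges exactly this, and its count $\prod_P|\Aut(P)|^{r_P}r_P!$ is precisely what produces the factor $1/r_P!$ in each exponential.
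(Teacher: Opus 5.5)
Your proof is correct and follows the paper's own argument: the coefficientwise orbit--stabilizer count, the parametrization of H-classes by multisets of primitive classes via Theorem \ref{thm:prim}, the order formula $\prod_P |\Aut(P)|^{r_P} r_P!$ from Theorem \ref{thm:wreath}, and the exponential-formula factorization into $\prod_P \exp\left(t^{n_P}/|\Aut(P)|\right)$. Your explicit $t$-adic convergence argument, using finiteness of primitive classes of each size, is a small improvement in rigor; the paper handles that point only implicitly, by restricting to primitive classes of size at most $n$ when computing the $n$th coefficient.
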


\begin{proof} Let
     $\{A_1,A_2,\ldots,A_q\}=\bigcup_{m\le n} PCl(m,k)$, a classification list of all primitive IW of size less than or equal to $n$ up to H-equivalence, where $A_i$ has size $m_i$. By the primitive decomposition theorem, the classification list of $W(n,k)$ comprises of all block sums $A:=A_1^{\oplus i_1}\oplus A_2^{\oplus i_2}\oplus\cdots \oplus A_q^{\oplus i_q}$ with $i_1m_1+\cdots+i_qm_q=n$. The size the Automorphism group is $|\Aut(A)|=\prod_j |\Aut(A_j)|^{i_j}i_j!$, by Theorem \ref{thm:wreath}. It follows that 
     $$ \frac{\big| [A] \big|}{2^{2n}n!^2}=\frac{1}{|\Aut(A)|}=\frac{1}{\prod_j |\Aut(A_j)|^{i_j}i_j!},$$ and summing over all possible choices of $A_{i_j},m_j$ we obtain
     $$\frac{\big| IW(n,k)\big|}{2^{2n}n!^2}=\sum_{m_1i_1+\cdots m_qi_q=n} \frac{1}{\prod_j |\Aut(A_j)|^{i_j}i_j!}.$$ 
    This is the $n$th coefficient of the product series $\prod_i\exp(t^{m_i}/|\Aut(A_i)|)$, so all in all, $Z_k(t)$ and 
     $$Z_k(t)\ = \ \prod_{M\in PCL(k)} \exp\left(\frac{t^{m_M}}{|\Aut(M)|}\right)=\exp\big(PZ_k(t)\big).$$ Here $PCL(k)=\bigcup_m PCL(m,k)$ and $m_M$ is the size of $M$.
\end{proof}

\begin{remark}
    Equation \eqref{eq:ZetaIW} may be viewed as an Euler product formula for $Z_k(t)$, where the primitive IW classes are the primes.
\end{remark}

\begin{remark}
    If we wish to count only weighing matrices, we use the same formula, taking into account only primitive $W(m,k)$ classification. The same is true for any other limited set of allowed the entries of the matrix, as long as this set is stable under negation.
\end{remark}

\begin{example}
    Let us count the number of $IW(4,25)$. We compute pairs $(m,p)$ for each primitive class of $IW(m,4)$, $m\le 4$, where $m$ is the size and $p$ is the size of the automorphism group. The primitive classification tells us that $(m,p)=$ $(1,2)$, $(2,4)$, $(4,8)$ and $(4,16)$. Consequently $PZ_{25}(t)=\tfrac{t}{2}+\tfrac{t^2}{4}+\tfrac{t^4}{8}+\tfrac{t^4}{16}+\cdots.$ Hence $Z_{25}(t)=1 + \tfrac{t}{2} + \tfrac{3t^2}{8} + \tfrac{7t^3}{48} + \tfrac{97t^4}{384}+\cdots$. We conclude that $|IW(4,25)|=2^84!^2\cdot \tfrac{97}{384}=37,248$.
\end{example}

\subsection{Counting symmetric and anti-symmetric IW}
The idea here is similar. We use Theorems \ref{thm:symm} and \ref{thm:a-symm} as the analog of the primitive decomposition theorem. The only thing is that we need to understand what are the `primitive' blocks here, which are those of types I,II,and III. The starting point is the ordinary classification of primitive IW, plus the (anti-)symmetric classification primitive $IW$. We define the two counting series:

$$Z^S_k(t) \ := \ 1+\sum_{n=1}^\infty  \frac{|SIW(n,k)|}{2^nn!}, \text{ and } Z^A_k(t) \ := \ 1+\sum_{n=1}^\infty  \frac{|AIW(n,k)|}{2^nn!}.$$
Let $SPCl(n,k)$ (resp. $APCl(n,k)$) to be a classification list of all primitive symmetric (resp. anti-symmetric) $IW$ matrices of size $n$ and weight $k$ up to symmetric equivalence. Define the primitive counting series

$$PZ^S_k(t) \ := \ \sum_{n=1}^\infty \sum_{A\in SPCl(n,k)}\frac{t^n}{|\SAut(A)|}+\sum_{A\in PCl(n,k)} \frac{t^{2n}}{|\TAut(A)|}$$
and 
$$PZ^A_k(t) \ := \ \sum_{n=1}^\infty \sum_{A\in APCl(n,k)}\frac{t^n}{|\SAut(A)|}+\sum_{A\in PCl(n,k)} \frac{t^{2n}}{|\TAut(A)|}$$
Then we have:

\begin{proposition}\label{prop:count_sym_IW}
    As formal power series,
    \be Z^S_k(t)\ = \exp\left( PZ^S_k(t)\right) \text{ and } \ Z^A_k(t)\ = \exp\left( PZ^A_k(t)\right).
    \ee
\end{proposition}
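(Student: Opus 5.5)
We mimic the proof of Proposition \ref{prop:countIW}, replacing the primitive decomposition theorem by the symmetric decomposition theorem (Theorem \ref{thm:symm}, resp. Theorem \ref{thm:a-symm} in the anti-symmetric case). We treat only the symmetric case; the anti-symmetric one is identical with $A_X$ in place of $S_X$.

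The first step is the symmetric orbit-stabilizer count. The group $\Mon(n)$ acts on $SIW(n,k)$ by $\Sigma\mapsto M\Sigma M^\top$. Its orbits are the SH-classes, and the stabilizer of $\Sigma$ is $\SAut(\Sigma)$. Hence
$$\frac{|SIW(n,k)|}{2^nn!}=\sum_{\Sigma}\frac{1}{|\SAut(\Sigma)|},$$
where $\Sigma$ runs over a classification list of all symmetric $IW(n,k)$ up to SH-equivalence. (Implicitly $Z^S_k$ carries the factor $t^n$ in its $n$th term.)

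The second step identifies the ``symmetric primes''. Let $\mathcal P$ be the set of connected symmetric blocks up to SH-equivalence, namely:
\begin{itemize}
\item[(I)] the members of $SPCl(m,k)$, of size $m$;
\item[(II,III)] the blocks $S_X$ for $X$ primitive, of size $2m$.
\end{itemize}
Every class of type II or III is counted once. By Lemma \ref{lem:SX}, $S_X\sim_{SH}S_{X'}$ if and only if $X'\sim_H X$ or $X'\sim_H X^\top$. So for a symmetric class $[X]$ one gets exactly one block, while for a non-symmetric class the two classes $[X]$ and $[X^\top]$ give the same block.

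Combining Theorem \ref{thm:symm} with the graph $\mathcal{SG}$ argument, every symmetric $IW(n,k)$ is SH-equivalent to a unique multiset sum $\bigoplus_{P\in\mathcal P}P^{\oplus i_P}$ with $\sum i_P\,\mathrm{size}(P)=n$. Moreover
$$|\SAut|=\prod_P|\SAut(P)|^{i_P}\,i_P!.$$
Exactly as in Proposition \ref{prop:countIW}, this yields
$$Z^S_k(t)=\prod_{P\in\mathcal P}\exp\bigl(t^{\mathrm{size}(P)}/|\SAut(P)|\bigr)=\exp\Bigl(\sum_{P\in\mathcal P}\frac{t^{\mathrm{size}(P)}}{|\SAut(P)|}\Bigr).$$
A subtle point is that Theorem \ref{thm:symm} is phrased per H-class $[A]$, not as a global unique-decomposition statement. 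I would first rephrase it as: symmetric matrices decompose uniquely, up to SH-equivalence, into blocks from $\mathcal P$, via connected components of $\mathcal{SG}$. Here one must check that each block in $\mathcal P$ has connected $\mathcal{SG}$, which is noted in the paper.

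The third and main step is matching $\sum_{P}t^{\mathrm{size}(P)}/|\SAut(P)|$ with $PZ^S_k(t)$. The type I terms give the first sum directly. For the remaining terms, Proposition \ref{prop:symautSx} gives
\begin{itemize}
\item $|\SAut(S_X)|=|\TAut(X)|$ for symmetric $[X]$;
\item $|\SAut(S_X)|=|\Aut(X)|$ for non-symmetric $[X]$.
\end{itemize}
The second sum in $PZ^S_k$ runs over all $A\in PCl(n,k)$ with weight $1/|\TAut(A)|$. For symmetric $[A]$, one term corresponds to one block $S_A$, with matching weight. For non-symmetric $[A]$, we need a convention for $\TAut(A)$: I would take it to be $\Aut(A)$ extended formally by $\ZZ/2$, i.e. the stabilizer of $[A]$ within the TH-group, so that $|\TAut(A)|=2|\Aut(A)|$. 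Then the two classes $[A]$ and $[A^\top]$, each contributing $t^{2n}/(2|\Aut(A)|)$, together give $t^{2n}/|\Aut(A)|=t^{2n}/|\SAut(S_A)|$, as required.

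Equivalently, one can say that the $t^{2n}$ sum counts the TH-group orbit structure: $|\TAut|$ for symmetric classes and $2|\Aut|$ for non-symmetric ones. I expect that clarifying this convention, and the bookkeeping pairing $[A]$ with $[A^\top]$, is the only real obstacle. Everything else follows formally from the Euler-product computation already carried out.
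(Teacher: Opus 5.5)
Your proposal is correct and is essentially the paper's proof, which likewise reruns the Euler-product argument of Proposition \ref{prop:countIW} on the block decomposition of Theorems \ref{thm:symm} and \ref{thm:a-symm}, taking the block weights from Proposition \ref{prop:symautSx}. The one real difference is in type III. The paper sets $\TAut(X)=\Aut(X)$ for non-symmetric $[X]$, and that is exactly the stabilizer of $X$ in the TH-group, so your ``stabilizer'' gloss for a group of order $2|\Aut(X)|$ is inaccurate and should be dropped. The paper's convention is right only if the $t^{2n}$-sum counts the pair $[X],[X^\top]$ once, i.e.\ runs over TH-classes; with $PCl$ read as an H-list, as in Proposition \ref{prop:countIW}, it double-counts $S_X$, and your convention $|\TAut(X)|=2|\Aut(X)|$ is the correct repair giving the same series.
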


\begin{proof}
    Analogous to the proof of Proposition \ref{prop:countIW}. We use the (anti-)symmetric classification theorems \ref{thm:symm} and \ref{thm:a-symm}. Note that for type III blocks $S_X$, $\TAut(X)=Aut(X)$.
\end{proof}

\begin{example}
    Let us count the number of symmetric and anti-symmetric $IW(4,25)$. For each primitive symmetric class, let us write a pair $\langle m,q\rangle$ for each $A\in SPCl(m,25)$ where $q$ is the size of $SAut(A)$. We write a pair $(m,p)$ for a usual primitive $IW(m,k)$ class, but now taking $p=|\TAut(A)|$. The symmetric primitive classification yields the pairs $\langle 1,2\rangle$, $\langle 2,2\rangle$, $\langle 4,4\rangle$, $\langle 4,8\rangle$ (four times),$\langle 4,2\rangle$, and $(1,4),(2,8)$. This leads to $PZ^S_{25}(t)=(\tfrac{t}{2}+\tfrac{t^2}{2}+\tfrac{t^4}{4}+\tfrac{4t^4}{8}+\tfrac{t^4}{2})+(\tfrac{t^2}{2}+\tfrac{t^4}{8})+\cdots$, and $Z^S_{25}(t)=1+\tfrac{t}{2}+\tfrac{9t^2}{8}+\tfrac{25t^3}{48}+\tfrac{769t^4}{384}+\cdots$. Hence $|SIW(4,25)|=2^44!\cdot \frac{769}{384}=769$. The classification yields no primitive anti-symmetric matrices. Therefore the pairs are only of type II and III (in fact of type II only), and they are $(1,4)$ and $(2,8)$. Thus $PZ^A_{25}(t)=\tfrac{t^2}{4}+\tfrac{t^4}{8}+\cdots$, and $Z^A_{25}(t)=1+\tfrac{t^2}{4}+\tfrac{5t^4}{32}$, and $|AIW(4,25)|=2^44!\cdot\tfrac{5}{32}=60$.
\end{example}

\section{Results for $IW(7,25)$}\label{sec:7}
In this section we outline the classification results of $IW(7,25)$. We supply: 

\begin{itemize}
    \item A classification list of all primitive $IW(m,25)$ for $m\le 7$.
    \item Say which pairs of them are transpose equivalent.
    \item The automorphism group id of each primitive class.
    \item The cardinality of each primitive class.
    \item The number of symmetric classes in each primitive class.
    \item The total count of the symmetric members in each primitive class.
    \item The total count of $IW(7,25)$.
    \item The total count of $SIW(7,25)$.
\end{itemize}

We first give an exhaustive classification list of all $IW(m,25)$ for $m\le 7$, up to TH-equivalence. Whenever a class contains a symmetric member, we chose to use it as a representative. The classes are enumerated by $m.q$, where $m$ is the size and $q$ is the serial number. Below the list we specify which are the non-symmetric primitive classes. 

\subsection{Classification list of primitive $IW(m,25)$, $m\le 7$}

\begin{itemize}
    \item[{\large \bf --$IW(1,25)$:}] $$A_1=[5].$$
    \item[{\large \bf --$IW(2,25)$:}] $$B_1=\begin{bmatrix}
        3 & 4 \\ 4 & -3
    \end{bmatrix}.$$
    \item[{\large\bf --$IW(3,25)$:}] $$\emptyset.$$
    \item[{\large \bf --$IW(4,25)$: $C_i=$}] 
    $$\left[\begin{array}{rrrr}
-4 & -2 & -2 & -1 \\
-2 & -1 & 4 & 2 \\
-2 & 4 & -1 & 2 \\
-1 & 2 & 2 & -4
\end{array}\right], \left[\begin{array}{rrrr}
-4 & -2 & -2 & -1 \\
-2 & -1 & 4 & 2 \\
-2 & 4 & 1 & -2 \\
-1 & 2 & -2 & 4
\end{array}\right]$$

    \item[{\large \bf --$IW(5,25)$: $D_i=$}]
            $$\left[\begin{array}{rrrrr}
-4 & -2 & -2 & -1 & 0 \\
-2 & 0 & 2 & 4 & -1 \\
-2 & 2 & 3 & -2 & 2 \\
-1 & 4 & -2 & 0 & -2 \\
0 & -1 & 2 & -2 & -4
\end{array}\right], \left[\begin{array}{rrrrr}
-3 & -2 & -2 & -2 & -2 \\
-2 & -3 & 2 & 2 & 2 \\
-2 & 2 & -3 & 2 & 2 \\
-2 & 2 & 2 & -3 & 2 \\
-2 & 2 & 2 & 2 & -3
\end{array}\right]$$

    \item[{\large \bf --$IW(6,25)$:}] 
    $E_i=$\\[0.3cm]
    \scalebox{0.5}[0.5]{$%
    \left[\begin{array}{rrrrrr}
-4 & -2 & -2 & -1 & 0 & 0 \\
-2 & -1 & 4 & 2 & 0 & 0 \\
-2 & 4 & 0 & 0 & -2 & -1 \\
-1 & 2 & 0 & 0 & 4 & 2 \\
0 & 0 & -2 & 4 & -1 & 2 \\
0 & 0 & -1 & 2 & 2 & -4
\end{array}\right], \left[\begin{array}{rrrrrr}
-4 & -2 & -2 & -1 & 0 & 0 \\
-2 & -1 & 4 & 2 & 0 & 0 \\
-2 & 4 & 0 & 0 & -2 & -1 \\
-1 & 2 & 0 & 0 & 4 & 2 \\
0 & 0 & -2 & 4 & 1 & -2 \\
0 & 0 & -1 & 2 & -2 & 4
\end{array}\right], \left[\begin{array}{rrrrrr}
2 & 0 & -2 & -4 & 1 & 0 \\
0 & -2 & 3 & -2 & -2 & 2 \\
-2 & 3 & 2 & -2 & 0 & -2 \\
-4 & -2 & -2 & -1 & 0 & 0 \\
1 & -2 & 0 & 0 & -2 & -4 \\
0 & 2 & -2 & 0 & -4 & 1
\end{array}\right], \left[\begin{array}{rrrrrr}
-4 & -2 & -2 & -1 & 0 & 0 \\
-2 & 0 & 3 & 2 & -2 & -2 \\
-2 & 3 & 0 & 2 & 2 & 2 \\
-1 & 2 & 2 & -4 & 0 & 0 \\
0 & -2 & 2 & 0 & -1 & 4 \\
0 & -2 & 2 & 0 & 4 & -1
\end{array}\right],$}\\[0.3cm]

\scalebox{0.5}[0.5]{$%
\left[\begin{array}{rrrrrr}
-4 & -2 & -2 & -1 & 0 & 0 \\
-2 & 0 & 4 & 0 & -2 & -1 \\
-2 & 4 & 0 & 0 & 1 & 2 \\
-1 & 0 & 0 & 4 & 2 & -2 \\
0 & -2 & 1 & 2 & 0 & 4 \\
0 & -1 & 2 & -2 & 4 & 0
\end{array}\right], \left[\begin{array}{rrrrrr}
-4 & -2 & -2 & -1 & 0 & 0 \\
-2 & 2 & 2 & 0 & -3 & -2 \\
-2 & 2 & 2 & 0 & 2 & 3 \\
-1 & 0 & 0 & 4 & 2 & -2 \\
0 & -3 & 2 & 2 & -2 & 2 \\
0 & -2 & 3 & -2 & 2 & -2
\end{array}\right], \left[\begin{array}{rrrrrr}
-3 & -3 & -2 & -1 & -1 & -1 \\
-3 & 1 & 1 & -1 & 2 & 3 \\
-2 & 1 & 3 & 1 & -3 & -1 \\
-1 & -1 & 1 & 3 & 3 & -2 \\
-1 & 2 & -3 & 3 & -1 & 1 \\
-1 & 3 & -1 & -2 & 1 & -3
\end{array}\right], \left[\begin{array}{rrrrrr}
-3 & -3 & -2 & -1 & -1 & -1 \\
-3 & 1 & 3 & -2 & 1 & 1 \\
-2 & 1 & -1 & 3 & -1 & 3 \\
-1 & -1 & 1 & 3 & 3 & -2 \\
-1 & 2 & 1 & 1 & -3 & -3 \\
-1 & 3 & -3 & -1 & 2 & -1
\end{array}\right],$}\\[0.3cm]

\scalebox{0.5}[0.5]{$%
\left[\begin{array}{rrrrrr}
-3 & -3 & -2 & -1 & -1 & -1 \\
-3 & 1 & 3 & -2 & 1 & 1 \\
-2 & 3 & -3 & 1 & 1 & 1 \\
-1 & -1 & 1 & 3 & -2 & 3 \\
-1 & -1 & 1 & 3 & 3 & -2 \\
-1 & 2 & 1 & 1 & -3 & -3
\end{array}\right],
 \left[\begin{array}{rrrrrr}
-3 & -3 & -2 & -1 & -1 & -1 \\
-3 & 2 & 3 & -1 & -1 & -1 \\
-2 & 3 & -3 & 1 & 1 & 1 \\
-1 & -1 & 1 & -2 & 3 & 3 \\
-1 & -1 & 1 & 3 & -2 & 3 \\
-1 & -1 & 1 & 3 & 3 & -2
\end{array}\right], \left[\begin{array}{rrrrrr}
-3 & -3 & -2 & -1 & -1 & -1 \\
-3 & 3 & -1 & -1 & 1 & 2 \\
-2 & -1 & 3 & 1 & 3 & -1 \\
-1 & -1 & 1 & 3 & -2 & 3 \\
-1 & 1 & 3 & -2 & -3 & -1 \\
-1 & 2 & -1 & 3 & -1 & -3
\end{array}\right], \left[\begin{array}{rrrrrr}
-3 & -2 & -2 & -2 & -2 & 0 \\
-2 & -2 & 0 & 2 & 3 & -2 \\
-2 & 0 & 2 & 3 & -2 & 2 \\
-2 & 2 & 3 & -2 & 0 & -2 \\
-2 & 3 & -2 & 0 & 2 & 2 \\
0 & -2 & 2 & -2 & 2 & 3
\end{array}\right],$}\\[0.3cm]

\scalebox{0.5}[0.5]{%
$\left[\begin{array}{rrrrrr}
-3 & -3 & -2 & -1 & -1 & -1 \\
-3 & 3 & -1 & -1 & 1 & 2 \\
-2 & 1 & 1 & 3 & 1 & -3 \\
-1 & -2 & 3 & -1 & 3 & 1 \\
-1 & -1 & 1 & 3 & -2 & 3 \\
-1 & 1 & 3 & -2 & -3 & -1
\end{array}\right]$}\\

\item[{\large \bf --$IW(7,25)$:}] $F_i=$\\[0.1cm]
\scalebox{0.5}[0.5]{$%
\left[\begin{array}{rrrrrrr}
-4 & -2 & -2 & -1 & 0 & 0 & 0 \\
-2 & 3 & 0 & 2 & 2 & 0 & -2 \\
-2 & 0 & 4 & 0 & -2 & -1 & 0 \\
-1 & 2 & 0 & 0 & 0 & 2 & 4 \\
0 & 2 & -2 & 0 & -4 & 0 & -1 \\
0 & 0 & -1 & 2 & 0 & -4 & 2 \\
0 & -2 & 0 & 4 & -1 & 2 & 0
\end{array}\right], \left[\begin{array}{rrrrrrr}
2 & -1 & -1 & -4 & 1 & 1 & 1 \\
-1 & 0 & 3 & -2 & 1 & -3 & -1 \\
-1 & 3 & 0 & -2 & -3 & 1 & -1 \\
-4 & -2 & -2 & -1 & 0 & 0 & 0 \\
1 & 1 & -3 & 0 & 1 & -2 & -3 \\
1 & -3 & 1 & 0 & -2 & 1 & -3 \\
1 & -1 & -1 & 0 & -3 & -3 & 2
\end{array}\right], \left[\begin{array}{rrrrrrr}
2 & -1 & -1 & -4 & 1 & 1 & 1 \\
-1 & -1 & 4 & -2 & -1 & -1 & -1 \\
-1 & 4 & -1 & -2 & -1 & -1 & -1 \\
-4 & -2 & -2 & -1 & 0 & 0 & 0 \\
1 & -1 & -1 & 0 & 2 & -3 & -3 \\
1 & -1 & -1 & 0 & -3 & 2 & -3 \\
1 & -1 & -1 & 0 & -3 & -3 & 2
\end{array}\right], \left[\begin{array}{rrrrrrr}
-4 & -2 & -2 & -1 & 0 & 0 & 0 \\
-2 & 1 & 2 & 2 & -2 & -2 & -2 \\
-2 & 2 & 1 & 2 & 2 & 2 & 2 \\
-1 & 2 & 2 & -4 & 0 & 0 & 0 \\
0 & -2 & 2 & 0 & -3 & 2 & 2 \\
0 & -2 & 2 & 0 & 2 & -3 & 2 \\
0 & -2 & 2 & 0 & 2 & 2 & -3
\end{array}\right],$}\\[0.3cm]

\scalebox{0.5}[0.5]{$%
\left[\begin{array}{rrrrrrr}
-4 & -2 & -2 & -1 & 0 & 0 & 0 \\
-2 & 1 & 2 & 2 & -2 & -2 & -2 \\
-2 & 2 & 2 & 0 & 0 & 2 & 3 \\
-1 & 2 & 0 & 0 & 4 & 0 & -2 \\
0 & -2 & 0 & 4 & 1 & 2 & 0 \\
0 & -2 & 2 & 0 & 2 & -3 & 2 \\
0 & -2 & 3 & -2 & 0 & 2 & -2
\end{array}\right], \left[\begin{array}{rrrrrrr}
-4 & -2 & -2 & -1 & 0 & 0 & 0 \\
-2 & 1 & 3 & 0 & -3 & -1 & -1 \\
-1 & -1 & 1 & 4 & 1 & -1 & 2 \\
-1 & 0 & 3 & -2 & 3 & 1 & 1 \\
-1 & 1 & 0 & 2 & 1 & 3 & -3 \\
-1 & 3 & -1 & 0 & -1 & 2 & 3 \\
-1 & 3 & -1 & 0 & 2 & -3 & -1
\end{array}\right], \left[\begin{array}{rrrrrrr}
1 & 1 & -1 & -4 & -2 & 1 & 1 \\
1 & -1 & 0 & -2 & 1 & -3 & -3 \\
-1 & 0 & 3 & -2 & 3 & 1 & 1 \\
-4 & -2 & -2 & -1 & 0 & 0 & 0 \\
-2 & 1 & 3 & 0 & -3 & -1 & -1 \\
1 & -3 & 1 & 0 & -1 & 3 & -2 \\
1 & -3 & 1 & 0 & -1 & -2 & 3
\end{array}\right], \left[\begin{array}{rrrrrrr}
1 & -1 & 1 & -4 & -1 & 2 & 1 \\
-1 & 4 & -1 & -2 & 1 & -1 & 1 \\
1 & -1 & 0 & -2 & 1 & -3 & -3 \\
-4 & -2 & -2 & -1 & 0 & 0 & 0 \\
-1 & 1 & 1 & 0 & 2 & 3 & -3 \\
2 & -1 & -3 & 0 & 3 & 1 & 1 \\
1 & 1 & -3 & 0 & -3 & 1 & -2
\end{array}\right],$}\\[0.3cm]

\scalebox{0.5}[0.5]{$%
\left[\begin{array}{rrrrrrr}
-4 & -2 & -1 & -1 & -1 & -1 & -1 \\
-2 & -1 & 2 & 2 & 2 & 2 & 2 \\
-1 & 2 & -4 & 1 & 1 & 1 & 1 \\
-1 & 2 & 1 & -4 & 1 & 1 & 1 \\
-1 & 2 & 1 & 1 & -4 & 1 & 1 \\
-1 & 2 & 1 & 1 & 1 & -4 & 1 \\
-1 & 2 & 1 & 1 & 1 & 1 & -4
\end{array}\right], \left[\begin{array}{rrrrrrr}
-4 & -2 & -1 & -1 & -1 & -1 & -1 \\
-2 & -1 & 2 & 2 & 2 & 2 & 2 \\
-1 & 2 & -4 & 1 & 1 & 1 & 1 \\
-1 & 2 & 1 & -3 & -1 & 0 & 3 \\
-1 & 2 & 1 & -1 & 0 & 3 & -3 \\
-1 & 2 & 1 & 0 & 3 & -3 & -1 \\
-1 & 2 & 1 & 3 & -3 & -1 & 0
\end{array}\right], \left[\begin{array}{rrrrrrr}
-4 & -2 & -1 & -1 & -1 & -1 & -1 \\
-2 & -1 & 2 & 2 & 2 & 2 & 2 \\
-1 & 2 & -3 & -1 & 0 & 1 & 3 \\
-1 & 2 & -1 & 4 & -1 & -1 & -1 \\
-1 & 2 & 0 & -1 & 1 & 3 & -3 \\
-1 & 2 & 1 & -1 & 3 & -3 & 0 \\
-1 & 2 & 3 & -1 & -3 & 0 & 1
\end{array}\right], \left[\begin{array}{rrrrrrr}
-4 & -2 & -1 & -1 & -1 & -1 & -1 \\
-2 & -1 & 2 & 2 & 2 & 2 & 2 \\
-1 & 2 & -1 & -1 & -1 & -1 & 4 \\
-1 & 2 & -1 & -1 & -1 & 4 & -1 \\
-1 & 2 & -1 & -1 & 4 & -1 & -1 \\
-1 & 2 & -1 & 4 & -1 & -1 & -1 \\
-1 & 2 & 4 & -1 & -1 & -1 & -1
\end{array}\right],$}\\[0.3cm]

\scalebox{0.5}[0.5]{$%
\left[\begin{array}{rrrrrrr}
-4 & -2 & -1 & -1 & -1 & -1 & -1 \\
-2 & 3 & -2 & 0 & 0 & 2 & 2 \\
-1 & -2 & 0 & 3 & 3 & 1 & 1 \\
-1 & 0 & 3 & -3 & 2 & 1 & 1 \\
-1 & 0 & 3 & 2 & -3 & 1 & 1 \\
-1 & 2 & 1 & 1 & 1 & -4 & 1 \\
-1 & 2 & 1 & 1 & 1 & 1 & -4
\end{array}\right], \left[\begin{array}{rrrrrrr}
-4 & -2 & -1 & -1 & -1 & -1 & -1 \\
-2 & 3 & -2 & 0 & 0 & 2 & 2 \\
-1 & -2 & 0 & 3 & 3 & 1 & 1 \\
-1 & 0 & 3 & -2 & 1 & -1 & 3 \\
-1 & 0 & 3 & 1 & -2 & 3 & -1 \\
-1 & 2 & 1 & -1 & 3 & 0 & -3 \\
-1 & 2 & 1 & 3 & -1 & -3 & 0
\end{array}\right], \left[\begin{array}{rrrrrrr}
-4 & -2 & -1 & -1 & -1 & -1 & -1 \\
-2 & 3 & -2 & 0 & 0 & 2 & 2 \\
-1 & -2 & 1 & 1 & 3 & 0 & 3 \\
-1 & 0 & 1 & 1 & 2 & 3 & -3 \\
-1 & 0 & 3 & 2 & -3 & 1 & 1 \\
-1 & 2 & 0 & 3 & 1 & -3 & -1 \\
-1 & 2 & 3 & -3 & 1 & -1 & 0
\end{array}\right], \left[\begin{array}{rrrrrrr}
-3 & -3 & -2 & -1 & -1 & -1 & 0 \\
-3 & 0 & 2 & 1 & 1 & 3 & -1 \\
-2 & 2 & -1 & 2 & 2 & -2 & 2 \\
-1 & 1 & 2 & -3 & -1 & 0 & 3 \\
-1 & 1 & 2 & -1 & 0 & -3 & -3 \\
-1 & 3 & -2 & 0 & -3 & 1 & -1 \\
0 & -1 & 2 & 3 & -3 & -1 & 1
\end{array}\right],$}\\[0.3cm]

\scalebox{0.5}[0.5]{$%
\left[\begin{array}{rrrrrrr}
-3 & -3 & -2 & -1 & -1 & -1 & 0 \\
-3 & 1 & 0 & 1 & 2 & 3 & -1 \\
-2 & 0 & 3 & 2 & 0 & -2 & 2 \\
-1 & 1 & 2 & -1 & -3 & 0 & -3 \\
-1 & 2 & 0 & -3 & -1 & 1 & 3 \\
-1 & 3 & -2 & 0 & 1 & -3 & -1 \\
0 & -1 & 2 & -3 & 3 & -1 & -1
\end{array}\right], \left[\begin{array}{rrrrrrr}
-3 & -3 & -2 & -1 & -1 & -1 & 0 \\
-3 & 1 & 2 & -1 & 0 & 3 & -1 \\
-2 & 3 & 0 & -1 & 1 & -3 & 1 \\
-1 & -1 & 0 & 2 & 3 & 1 & 3 \\
-1 & 0 & 2 & 3 & -3 & -1 & 1 \\
-1 & 1 & -2 & 3 & 1 & 0 & -3 \\
0 & -2 & 3 & 0 & 2 & -2 & -2
\end{array}\right], \left[\begin{array}{rrrrrrr}
-3 & -2 & -2 & -2 & -2 & 0 & 0 \\
-2 & -2 & 1 & 2 & 2 & -2 & -2 \\
-2 & 1 & -2 & 2 & 2 & 2 & 2 \\
-2 & 2 & 2 & -2 & 1 & -2 & 2 \\
-2 & 2 & 2 & 1 & -2 & 2 & -2 \\
0 & -2 & 2 & -2 & 2 & 3 & 0 \\
0 & -2 & 2 & 2 & -2 & 0 & 3
\end{array}\right]$}
\end{itemize}

Non-symmetric classes: $6.8,6.9,6.13,7.6,7.18$.
\newpage
\subsection{Automorphism groups, counting, symmetric classes and symmetric automorphisms}
In the following we outline a table (Figure \ref{fig:1})specifying data for each class. The automorphism groups are given by the identifier $p.q$, which is the taxonomy used by the \texttt{GAP} software, as well as in the abstract finite group database \texttt{LMFDB} \cite{lmfdb}. The next column is the class cardinality, followed by the number of symmetric subclasses in each class. The rightmost column specifies the cardinalities of the symmetric automorphism groups of each class. We use there the shorthand $p(n)+q(m)+\cdots$ to say that $p$ subclasses have symmetric automorphism group of cardinality $n$, then $q$ subclasses have symmetric automorphism group of cardinality $m$, etc. The data in the table is sufficient for counting the number of $IW$ and $SIW$, as per \S\ref{sec:6}.

\begin{figure}[h]
    \caption{Automorphism and Symmetric Data for Primitive Classes}\label{fig:1}
{\scriptsize
\begin{center}
\begin{tabular}{|c|c|c|c|c|}
    \hline
    Class Name & Automorphism Group & Class Cardinality & \# Symmetric Subclasses & Symmetric Automorphism Orders\\
    \hline
    1.1 & 2.1 & 2 & 1 & 1(2)\\
    \hline
    2.1 & 4.1 & 16 & 2 & 2(2)\\
    \hline
    4.1 & 16.3 & 9,216 & 5 & 4(8)+(4)\\
    \hline
    4.2 & 8.4 & 18,432 & 1 & (2)\\
    \hline
    5.1 & 8.2 & 1,843,200 & 4 & 4(4)\\
    \hline
    5.2 & 240.189 & 61,440 & 6 & 2(240)+2(24)+2(16)\\
    \hline
    6.1 & 12.1 & 176,947,200 & 2 & 2(4)\\
\hline

6.2 & 12.1 & 176,947,200 & 2 & 1(2)+1(6)\\
\hline

6.3 & 4.1 & 530,841,600 & 1 & 1(2)\\
\hline

6.4 & 32.27 & 66,355,200 & 11 & 1(8)+6(16)+4(32)\\
\hline

6.5 & 24.5 & 88,473,600 & 4 & 2(4)+2(12)\\
\hline

6.6 & 16.3 & 132,710,400 & 5 & 4(8)+1(4)\\
\hline

6.7 & 24.5 & 88,473,600 & 4 & 2(4)+2(12)\\
\hline

6.8 & 8.2 & 265,420,800 & 0 & -- \\
\hline

6.9 & 16.3 & 132,710,400 & 0 & -- \\
\hline

6.10 & 144.115 & 14,745,600 & 5 & 1(8)+2(24)+1(72)+1(12)\\
\hline

6.11 & 12.1 & 176,947,200 & 1 & 1(6)\\
\hline

6.12 & 48.30 & 44,236,800 & 3 & 1(8)+1(24)+1(4)\\
\hline

6.13 & 12.1 & 176,947,200 & 0 & --\\
\hline

7.1 & 8.2 & 52,022,476,800 & 4 & 4(4)\\
\hline

7.2 & 4.2 & 104,044,953,600 & 4 & 4(4)\\
\hline

7.3 & 24.14 & 17,340,825,600 & 8 & 4(8)+4(24)\\
\hline

7.4 & 48.51 & 8,670,412,800 & 8 & 4(16)+4(48)\\
\hline

7.5 & 6.2 & 69,363,302,400 & 2 & 2(2)\\
\hline

7.6 & 2.1 & 208,089,907,200 & 0 & -- \\
\hline

7.7 & 4.2 & 104,044,953,600 & 4 & 4(4)\\
\hline

7.8 & 2.1 & 208,089,907,200 & 2 & 2(2)\\
\hline

7.9 & 1440.5842 & 289,013,760 & 8 & 2(32)+4(96)+2(1440)\\
\hline

7.10 & 16.10 & 26,011,238,400 & 8 & 8(8)\\
\hline

7.11 & 8.2 & 52,022,476,800 & 4 & 4(4)\\
\hline

7.12 & 240.189 & 1,734,082,560 & 6 & 2(16)+2(24)+2(240)\\
\hline

7.13 & 24.14 & 17,340,825,600 & 8 & 4(8)+4(24)\\
\hline

7.14 & 4.2 & 104,044,953,600 & 4 & 4(4)\\
\hline

7.15 & 2.1 & 208,089,907,200 & 2 & 2(2)\\
\hline

7.16 & 6.2 & 69,363,302,400 & 2 & 2(2)\\
\hline

7.17 & 8.2 & 52,022,476,800 & 4 & 4(4)\\
\hline

7.18 & 8.2 & 52,022,476,800 & 0 & -- \\
\hline

7.19 & 48.48 & 8,670,412,800 & 6 & 2(8)+2(16)+2(48)\\
\hline

\end{tabular}
\end{center}
}%
\end{figure}

\subsection{Non-primitive classification}

In the next table , Figure \ref{fig:2}, we organize all of the possible primitive decompositions of an $IW(7,25)$. These are the partitions of $7$ to smaller blocks of available sizes. An entry $A\oplus 3B$ means that we take a block of type $A$ and 3 blocks of type $B$. We list down the multiplicities up to H and TH-equivalences, which are derived from the number of possible primitive classes of each type. Notice that in each matrix there can be at most one block which is not of symmetric type, making the TH analysis straightforward.

\begin{center}
    \begin{figure}[h]
        \caption{The full H and TH classification of $IW(7,25)$} \label{fig:2}
\begin{tabular}{|c|c|c|}
    \hline
    Decomposition Type & TH-Multiplicity & H-multiplicity\\
    \hline 
    $7A$ & 1 & 1\\
    \hline
    $5A\oplus B$ & 1& 1\\
    \hline
    $3A\oplus 2B$ & 1 & 1\\
    \hline
    $A\oplus 3B$ & 1 & 1\\
    \hline
    $3A\oplus C$ & 2 & 2\\
    \hline
    $A\oplus B\oplus C$ & 2 & 2\\
    \hline
    $2A\oplus D$ &2 & 2\\ 
    \hline
    $B\oplus D$ & 2 & 2\\
    \hline
    $A\oplus E$ & 13 & 16\\
    \hline
    $F$ & 19 & 21\\
    \hline
    \hline
    Total & 44 & 49\\
    \hline
\end{tabular}
\end{figure}
\end{center}

\subsection{Total counts}
Total counts: $|IW(7,25)|=1,915,159,357,440$, $|SIW(7,25)|=14,813,808$, and of course $|AIW(7,25)|=0$. The number of primitive $IW(7,25)$ is $1,623,390,289,920$. A random $IW(7,25)$ has probability of $84.8\%$ to be primitive.

\bigskip


\bibliography{weighing}

\end{document}